\newtheorem{thm}{Theorem}[section]
\newtheorem{prop}[thm]{Proposition}
\newtheorem{lemma}[thm]{Lemma}
\newtheorem{cor}[thm]{Corollary}
\theoremstyle{definition}
\newtheorem{definition}[thm]{Definition}
\newcommand{\atipsm}{15}
\newcommand{\rin}{3}
\newcommand{\rout}{4}
\newcounter{ZZ}
\newcommand{\arcarrow}[4]{%
  \pgfmathsetmacro{\astart}{#1}
  \pgfmathsetmacro{\adiff}{#2}
  \pgfmathsetmacro{\plabel}{#4}
  \pgfmathsetmacro{\aend}{\astart+\adiff}
  \fill[#3] (\astart+\atipsm:\rin) arc (\astart+\atipsm:\aend-\atipsm:\rin)
       -- (\aend:\rout) arc (\aend:\astart:\rout)
       -- cycle;

  \coordinate  (A\plabel) at (\astart+\atipsm:\rin);
  \coordinate  (B\plabel) at (\aend-\atipsm:\rin);
  \filldraw(A\plabel) circle (6pt);
  \filldraw(B\plabel) circle (6pt);
%
}
\newcommand{\smallarcarrow}[4]{%
  
  \pgfmathsetmacro{\astart}{#1}
  \pgfmathsetmacro{\adiff}{#2}
  \pgfmathsetmacro{\plabel}{#4}
  \pgfmathsetmacro{\aend}{\astart+\adiff+\adiff}
  \fill[#3] (\astart+\atipsm:\rin) arc (\astart+\atipsm:\aend-\atipsm:\rin)
       -- (\aend:\rout) arc (\aend:\astart:\rout)
       -- cycle;

  \coordinate  (A\plabel) at (\astart+\adiff:\rin);
  \filldraw(A\plabel) circle (6pt);
    
}
\newcommand{\leveltwo}[1]{
\begin{tikzpicture}[rotate=0,scale=#1]
\setcounter{ZZ}{0}
\foreach \x in {0,180} {
   \arcarrow{\x}{180}{blue!15,draw = blue!50!black}{\value{ZZ}} 
   \stepcounter{ZZ}
}

  \draw[thick](A1) -- (A0);
  \draw[thick](B0) -- (B1);
  \draw[thick, blue!60](A0) -- (30:\rout) -- (45:\rin)--(60:\rout)--(70:\rout-.5);
\draw[thick,blue!60](B0) -- (150:\rout) -- (135:\rin) -- (120:\rout)-- (110:\rout-.5) ;
\draw[thick,blue!60](A1) -- (-150:\rout) -- (-135:\rin) -- (-120:\rout)-- (-110:\rout-.5) ;
\draw[thick,blue!60](B1)-- (-30:\rout) -- (-45:\rin)--(-60:\rout)--(-70:\rout-.5);

 \foreach \x in {45, -45,  135, -135}{
  \filldraw[blue!40!black](\x:\rin) circle (6pt);
  };
  
   \foreach \x in {0,30,60,120,150,-30,-60,-120,-150,180}{
  \filldraw[blue!50!black](\x:\rout) circle (6pt);
  }
\end{tikzpicture}
}
\newcommand{\leveltworeduction}[1]{
\begin{tikzpicture}[rotate=0,scale=#1]
\coordinate (v1) at (2,0);
\coordinate (v2) at (-2,0);
\coordinate (v3) at (0,1);
\coordinate (v4) at (1,3);
\coordinate (v5) at (0,-1);
\coordinate (v6) at (1,-3);

\draw[thick, blue!60] (v2)--(v3) --(v1) --(v4) --(v2);
\draw[thick, blue!60] (v2)--(v5) --(v1) --(v6) --(v2);
\draw[thick, blue!60] (v3)--(v4);
\draw[thick, blue!60] (v5)--(v6);
\draw[thick] (v4)--(v5);
\draw[thick] (v3)--(v6);
\foreach \x in {1,2,3,4,5,6}{
\filldraw[blue!50!black] (v\x) circle (6pt);
};
\end{tikzpicture}
}
\newcommand{\nondescendant}[1]{

\begin{tikzpicture}[scale=#1]
\coordinate (v1) at (1,1);
\coordinate (v2) at (3,1);
\coordinate (v3) at (-1,1);
\coordinate (v4) at (-3,1);
\coordinate (v5) at (0,-1);
\coordinate (v6) at (0,3);
\coordinate (v7) at (2,3);

\draw[thick](v1)--(v2);
\draw[thick](v3)--(v4);
\draw[thick](v5) -- (v1) --(v6)--(v3)--(v5);
\draw[thick](v5) -- (v2) --(v6)--(v4)--(v5);
\draw[thick, blue!60!black!60] (v1) -- (v7) --(v2);
\draw[thick, blue!60!black!60] (v3) -- (v7) --(v4);
\foreach \x in {1,2,3,4,5,6,7}{
\filldraw[blue!50!black] (v\x) circle (6pt);
};

}
\newcommand{\levelthreea}[1]{
\begin{tikzpicture}[scale=#1]
\setcounter{ZZ}{0}
\foreach \x in {30,150,270} {
   \arcarrow{\x}{120}{blue!30,draw = blue!50!black}{\value{ZZ}} 
   \stepcounter{ZZ}
} 
  \foreach \x in {30,150,270}{
  \filldraw[blue!50!black](\x:\rout) circle (6pt);
  }
  \draw[thick](A2) -- (A0);
  \draw[thick](A1) -- (B2);
  \draw[thick](B0) -- (B1);
\end{tikzpicture}
}
\newcommand{\levelthreeb}[1]{
\begin{tikzpicture}[scale=#1]
\setcounter{ZZ}{0}
\foreach \x in {30,150,270} {
   \arcarrow{\x}{120}{blue!30,draw = blue!50!black}{\value{ZZ}} 
   \stepcounter{ZZ}
} 
  \foreach \x in {30,150,270}{
  \filldraw[blue!50!black](\x:\rout) circle (6pt);
  }
  \draw[thick](A0) -- (B1);
  \draw[thick](A1) -- (B2);
  \draw[thick](A2) -- (B0);
\end{tikzpicture}
}
\newcommand{\levelthreec}[1]{
\begin{tikzpicture}[scale=#1]
\setcounter{ZZ}{0}
\smallarcarrow{60}{30}{orange!20,draw = orange!50!black}{\value{ZZ}} 
\stepcounter{ZZ}
\foreach \x in {120,270} {
   \arcarrow{\x}{150}{blue!30,draw = blue!50!black}{\value{ZZ}} 
   \stepcounter{ZZ}
} 
  \foreach \x in  {60,120,270} {
  \filldraw[blue!50!black](\x:\rout) circle (6pt);
  }
  \draw[thick](A0) -- (B1);
  \draw[thick](A0) -- (A2);
  \draw[thick](A1) -- (B2);
\end{tikzpicture}
}
\newcommand{\fourbigzag}[9]{
\begin{tikzpicture}[scale=#9]
\setcounter{ZZ}{0}
\foreach \x in {45,135,225,315} {
\arcarrow{\x}{90}{blue!30,draw = blue!50!black}{\value{ZZ}} 
\stepcounter{ZZ}
} 

\foreach \x in {45,135,225,315} {
\filldraw[blue!50!black](\x:\rout) circle (6pt);
}
  \draw[thick](#1) -- (#2);
  \draw[thick](#3) -- (#4);
  \draw[thick](#5) -- (#6);
  \draw[thick](#7) -- (#8);
\end{tikzpicture}
}
\newcommand{\threebigzag}[9]{
\setcounter{ZZ}{1}
\begin{tikzpicture}[scale=#9]
\coordinate  (A0) at (72:\rout);
\coordinate  (B0) at (108:\rout);
\foreach \x in {108,216,324} {
\arcarrow{\x}{108}{blue!30,draw = blue!50!black}{\value{ZZ}} 
\stepcounter{ZZ}
} 
\foreach \x in  {108,216,324} {
\filldraw[blue!50!black](\x:\rout) circle (6pt);
}
\filldraw(A0) circle (6pt);
\filldraw(B0) circle (6pt);
\draw[thick](A0) -- (#1);
\draw[thick](A0) -- (#2);
\draw[thick](#3) -- (#4);
\draw[thick](#5) -- (#6);
\draw[thick](#7) -- (#8);
\end{tikzpicture}
}
\newcommand{\threebigzagosm}[9]{
\setcounter{ZZ}{1}
\begin{tikzpicture}[scale=#9]
\smallarcarrow{72}{18}{orange!20,draw = orange!50!black}{0} 
\foreach \x in {108,216,324} {
\arcarrow{\x}{108}{blue!30,draw = blue!50!black}{\value{ZZ}} 
\stepcounter{ZZ}
} 
\foreach \x in  {72,108,216,324} {
\filldraw[blue!50!black](\x:\rout) circle (6pt);
}
\draw[thick](#1) -- (#2);
\draw[thick](#3) -- (#4);
\draw[thick](#5) -- (#6);
\draw[thick](#7) -- (#8);
\end{tikzpicture}
}
\newcommand{\twobigzagosm}[9]{
\begin{tikzpicture}[scale=#9]
\coordinate  (A0) at (70:\rout);
\coordinate  (B0) at (110:\rout);

\arcarrow{110}{140}{blue!30,draw = blue!50!black}{1} 
\smallarcarrow{250}{20}{orange!20,draw = orange!50!black}{2} 
\arcarrow{290}{140}{blue!30,draw = blue!50!black}{3} 
 
\foreach \x in  {250,290} {
  \filldraw[blue!50!black](\x:\rout) circle (6pt);
  }
  \draw[thick](A0) -- (#1);
  \draw[thick](A0) -- (#2);
  \draw[thick](#3) -- (#4);
  \draw[thick](#5) -- (#6);
  \draw[thick](#7) -- (#8);
  \filldraw(A0) circle (6pt);
  \filldraw(B0) circle (6pt);
\end{tikzpicture}
}
\newcommand{\twobigzagosmb}[9]{
\begin{tikzpicture}[scale=#9]
\coordinate  (B0) at (110:\rout);
\coordinate  (A0) at (70:\rout);
  
\smallarcarrow{110}{40}{orange!20,draw = orange!50!black}{1} 
\arcarrow{190}{120}{blue!30,draw = blue!50!black}{2} 
\arcarrow{310}{120}{blue!30,draw = blue!50!black}{3} 
 
\foreach \x in {110,190,310,70} {
  \filldraw[blue!50!black](\x:\rout) circle (6pt);
  }
\filldraw(A0) circle (6pt);
\filldraw(B0) circle (6pt);
  \draw[thick](A0) -- (#1);
  \draw[thick](A0) -- (#2);
  \draw[thick](#3) -- (#4);
  \draw[thick](#5) -- (#6);
  \draw[thick](#7) -- (#8);
\end{tikzpicture}
}
\newcommand{\twolongtwoshorta}[1]{
\begin{tikzpicture}[scale=#1]
\smallarcarrow{70}{20}{orange!20,draw = orange!50!black}{0} 
\arcarrow{110}{140}{blue!30,draw = blue!50!black}{1} 
\smallarcarrow{250}{20}{orange!20,draw = orange!50!black}{2} 
\arcarrow{290}{140}{blue!30,draw = blue!50!black}{3} 
 
\foreach \x in {70,110,250,290} {
  \filldraw[blue!50!black](\x:\rout) circle (6pt);
}
\draw[thick](A0) -- (B1);
\draw[thick](A0) -- (A3);
\draw[thick](A2) -- (A1);
\draw[thick](A2) -- (B3);
\end{tikzpicture}
}
\newcommand{\twolongtwoshortb}[1]{
\begin{tikzpicture}[scale=#1]
\smallarcarrow{70}{20}{orange!20,draw = orange!50!black}{0} 
\arcarrow{110}{140}{blue!30,draw = blue!50!black}{1} 
\smallarcarrow{250}{20}{orange!20,draw = orange!50!black}{2} 
\arcarrow{290}{140}{blue!30,draw = blue!50!black}{3} 
 
\foreach \x in {70,110,250,290} {
  \filldraw[blue!50!black](\x:\rout) circle (6pt);
}
\draw[thick](A0) -- (B1);
\draw[thick](A0) -- (A2);
\draw[thick](A1) -- (A3);
\draw[thick](A2) -- (B3);
\end{tikzpicture}
}
\newcommand{\twolongtwoshortc}[1]{
\begin{tikzpicture}[scale=#1]
\smallarcarrow{50}{20}{orange!20,draw = orange!50!black}{0} 
\smallarcarrow{90}{20}{orange!20,draw = orange!50!black}{1} 
\arcarrow{130}{140}{blue!30,draw = blue!50!black}{2} 
\arcarrow{270}{140}{blue!30,draw = blue!50!black}{3} 
 
\foreach \x in {50,90,130,270} {
  \filldraw[blue!50!black](\x:\rout) circle (6pt);
}
\draw[thick](A0) -- (A3);
\draw[thick](A0) -- (A2);
\draw[thick](A1) -- (B2);
\draw[thick](A1) -- (B3);
\end{tikzpicture}
}
\newcommand{\onebigzigzaga}[1]{
\begin{tikzpicture}[scale=#1]
\coordinate  (A0) at (70:\rout);
\coordinate  (B0) at (110:\rout);
\smallarcarrow{110}{40}{orange!20,draw = orange!50!black}{1} 
\smallarcarrow{190}{40}{orange!20,draw = orange!50!black}{2} 
\arcarrow{270}{160}{blue!30,draw = blue!50!black}{3} 
\foreach \x in {110,190,270} {
  \filldraw[blue!50!black](\x:\rout) circle (6pt);
}
\filldraw(A0) circle (6pt);
\filldraw(B0) circle (6pt);
\draw[thick](A0) -- (A1);
\draw[thick](A0) -- (A2);
\draw[thick](B0) -- (B3);
\draw[thick](B0) -- (A2);
\draw[thick](A1) -- (A3);
\end{tikzpicture}
}
\newcommand{\onebigzigzagb}[1]{
\begin{tikzpicture}[scale=#1]
\coordinate  (A0) at (70:\rout);
\coordinate  (B0) at (110:\rout);
\smallarcarrow{110}{40}{orange!20,draw = orange!50!black}{1} 
\smallarcarrow{190}{40}{orange!20,draw = orange!50!black}{2} 
\arcarrow{270}{160}{blue!30,draw = blue!50!black}{3} 
\foreach \x in {110,190,270} {
  \filldraw[blue!50!black](\x:\rout) circle (6pt);
}
\filldraw(A0) circle (6pt);
\filldraw(B0) circle (6pt);
\draw[thick](A0) -- (A1);
\draw[thick](A0) -- (A2);
\draw[thick](B0) -- (A2);
\draw[thick](B0) -- (A3);
\draw[thick](A1) -- (B3);
\end{tikzpicture}
}
\newcommand{\onebigzigzagc}[1]{
\begin{tikzpicture}[scale=#1]
\coordinate  (A0) at (70:\rout);
\coordinate  (B0) at (110:\rout);
\smallarcarrow{110}{40}{orange!20,draw = orange!50!black}{1} 
\arcarrow{190}{160}{blue!30,draw = blue!50!black}{2} 
\smallarcarrow{350}{40}{orange!20,draw = orange!50!black}{3} 

\filldraw(A0) circle (6pt);
\filldraw(B0) circle (6pt);
\draw[thick](A0) -- (A1);
\draw[thick](A0) -- (A2);
\draw[thick](B0) -- (B2);
\draw[thick](B0) -- (A3);
\draw[thick](A1) -- (A3);
\foreach \x in {110,190,350} {
  \filldraw[blue!50!black](\x:\rout) circle (6pt);
}
\end{tikzpicture}
}
\newcommand{\onebigzigzagd}[1]{
\begin{tikzpicture}[scale=#1]
\coordinate  (A0) at (70:\rout);
\coordinate  (B0) at (110:\rout);
\smallarcarrow{110}{40}{orange!20,draw = orange!50!black}{1} 
\arcarrow{190}{160}{blue!30,draw = blue!50!black}{2} 
\smallarcarrow{350}{40}{orange!20,draw = orange!50!black}{3} 
\foreach \x in {110,190,350} {
  \filldraw[blue!50!black](\x:\rout) circle (6pt);
}
\filldraw(A0) circle (6pt);
\filldraw(B0) circle (6pt);
\draw[thick](A0) -- (A1);
\draw[thick](A0) -- (B2);
\draw[thick](B0) -- (A2);
\draw[thick](B0) -- (A3);
\draw[thick](A1) -- (A3);
\end{tikzpicture}
}
\newcommand{\nobigzigzag}[1]{
\setcounter{ZZ}{1}
\begin{tikzpicture}[scale=#1]
\coordinate  (A0) at (72:\rout);
\coordinate  (B0) at (108:\rout);
\foreach \x in {108,216,324} {
\smallarcarrow{\x}{54}{orange!20,draw = orange!50!black}{\value{ZZ}} 
\stepcounter{ZZ}
}
\foreach \x in {108,216,324} {
  \filldraw[blue!50!black](\x:\rout) circle (6pt);
}
\filldraw(A0) circle (6pt);
\filldraw(B0) circle (6pt);
\draw[thick](A0) -- (A1);
\draw[thick](A0) -- (A2);
\draw[thick](B0) -- (A2);
\draw[thick](B0) -- (A3);
\draw[thick](A1) -- (A3);
\end{tikzpicture}
}
\newcommand{\CV}{\operatorname{CV}}
\newcounter{MarginCounter}
\newcommand{\mrgn}[1]{\stepcounter{MarginCounter}\textcolor{red}{$^{\arabic{MarginCounter}}$}\marginpar{\textcolor{red}{\tiny{$^{\arabic{MarginCounter}}$ #1}}}}
\renewcommand{\mrgn}[1]{}
\newcommand{\ghostmrgn}[1]{\stepcounter{MarginCounter}}
\newtheoremstyle{case}{}{}{}{}{}{:}{ }{}
\theoremstyle{case}
\theoremstyle{plain}
\newcommand{\separator}{
	\hspace{0.5cm}
	\begin{tikzpicture}[thick]
	\node (v1) at (0,0) {};
	\draw [black,   -latex      ] (0,1) -- (1,1) node [right] {};
	\end{tikzpicture}
	\hspace{0.5cm}
}
\keywords{double triangle reduction/expansion; $K_5$-descendants; zigzags; n-zigzags; $\phi^4$ theory; constant $c_2$-invariant}
\title{Some results on double triangle descendants of $K_5$}
\author{Mohamed Laradji, Marni Mishna, and Karen Yeats}
\begin{document}
\begin{abstract}
        Double triangle expansion is an operation on $4$-regular graphs with at least one triangle which replaces a triangle with two triangles in a particular way.  We study the class of graphs which can be obtained by repeated double triangle expansion beginning with the complete graph $K_5$.  These are called double triangle descendants of $K_5$.  We enumerate, with explicit rational generating functions, those double triangle descendants of $K_5$ with at most four more vertices than triangles.  We also prove that the minimum number of triangles in any $K_5$ descendant is four.  Double triangle descendants are an important class of graphs because of conjectured properties of their Feynman periods when they are viewed as scalar Feynman diagrams, and also because of conjectured properties of their $c_2$ invariants, an arithmetic graph invariant with quantum field theoretical applications.
\end{abstract}
\maketitle

\section{Introduction}
\label{sec:intro}
In the study of Feynman integrals, it is useful to characterize the graphs that are obtainable by repeated application of a particular graph operation applied to the complete unlabelled graph on five vertices, $K_5$. The operation modifies triangles and each application is called a \emph{double triangle expansion}.  We call the class of graphs obtainable this way the \emph{double triangle descendants of $K_5$} or simply \emph{$K_5$ descendants}.  The motivation for studying these graphs comes from results and a conjecture in quantum field theory which we describe below. They connect the class of $K_5$ descendants to questions of transcendence in number theory. In this work we provide concrete results on the structure of this class, and describe how to enumerate its elements along a key parameter.

Double triangle expansion is a local graph modification, which transforms a triangle and a pendant edge into two triangles attached a pendant edge. More formally, suppose
$H$ is a subgraph of a graph $G$ with four distinct vertices $\{v_1, v_2, v_3, v_4\}$ in a particular configuration, specifically $H$ is composed of a triangle $\{v_1,v_2,v_3\}$ and a single additional edge $\{v_2,v_4\}$. In a \emph{double triangle expansion} of $G$, a new \emph{child} graph $G':=\mbox{DTE}_G(H)$ is created from the \emph{parent} $G$ by subdividing the edges $\{v_1,v_3\}$ and $\{v_2,v_4\}$ and identifying the two newly created vertices. The operation is illustrated in Figure~\ref{fig:DTEtriangleandanedge}, and an example is shown in Figure~\ref{fig:k5dte}.
	\label{defn:dte}

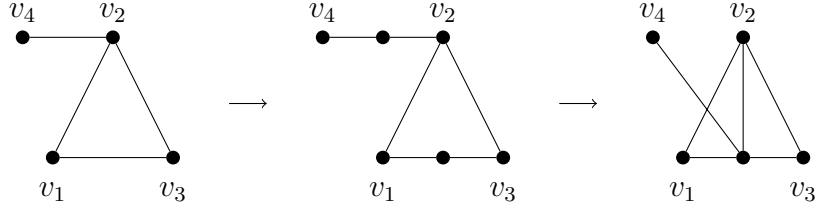
\begin{figure}
	\newcommand{\belowyshift}{-0.3cm}
	\centering
	\begin{tikzpicture}[scale=.8, every node/.style={draw,circle,very thick, fill=black, minimum size=4pt, inner sep=0pt}]
	\node[label=$v_2$] (v2) at (1,2) {};
	\node[label={[below,yshift=\belowyshift]$v_1$}] (v1) at (0,0) {};
	\node[label={[below,yshift=\belowyshift]$v_3$}] (v3) at (2,0) {};
	\node[label=$v_4$] (v4) at (-0.5,2) {};
	\draw
	(v1)--(v2)--(v3)--(v1)
	(v2)--(v4)
	;
	\end{tikzpicture}
	\begin{tikzpicture}
	\node (v1) at (0,0) {};
	\node (v2) at (1,2) {};
	\draw[->]
	(0.25,1.25)--(0.75,1.25);
	\end{tikzpicture}
	\begin{tikzpicture}[scale=.8,every node/.style={draw,circle,very thick, fill=black, minimum size=4pt, inner sep=0pt}]
	\node[label=$v_2$] (v2) at (1,2) {};
	\node[label={[below,yshift=\belowyshift]$v_1$}] (v1) at (0,0) {};
	\node[label={[below,yshift=\belowyshift]$v_3$}] (v3) at (2,0) {};
	\node[label=$v_4$] (v4) at (-1,2) {};
	\node (v5) at (0,2) {};
	\node (v6) at (1,0) {};
	\draw
	(v1)--(v2)--(v3)--(v6)--(v1)
	(v2)--(v5)--(v4)
	;
	\end{tikzpicture}
	\begin{tikzpicture}
	\node (v1) at (0,0) {};
	\node (v2) at (1,2) {};
	\draw[->]
	(0.25,1.25)--(0.75,1.25);
	\end{tikzpicture}
	\begin{tikzpicture}[scale=.8,every node/.style={draw,circle,very thick, fill=black, minimum size=4pt, inner sep=0pt}]
	\node[label=$v_2$] (v2) at (1,2) {};
	\node[label={[below,yshift=\belowyshift]$v_1$}] (v1) at (0,0) {};
	\node[label={[below,yshift=\belowyshift]$v_3$}] (v3) at (2,0) {};
	\node[label=$v_4$] (v4) at (-0.5,2) {};
	\node (v5) at (1,0) {};
	\draw
	(v1)--(v2)--(v3)--(v5)--(v1)
	(v2)--(v5)--(v4)
	;
	\end{tikzpicture}
	\caption[Double triangle expansion by subdividing]{Given a subgraph consisting of a triangle and a pendant edge, a DTE is applied by subdividing the edge not in the triangle and the opposite edge in the triangle, and identifying the newly created vertices.}
	\label{fig:DTEtriangleandanedge}
\end{figure}
\begin{figure}
	\centering
	\begin{tikzpicture}[scale = .6, every node/.style={draw,shape=circle,fill=black,scale=0.4}]
	\node (v1) at (0,0) {};
	\node (v2) at (1,2) {};
	\node (v3) at (2,0) {};
	\node (v4) at (3,2) {};
	\node (v5) at (4,0) {};
	\draw
	(v1)--(v2)--(v3)--(v4)--(v5)
	(v2)--(v4)
	(v1)--(v3)--(v5)
	[-] (v1)  to [out=120,in=-90,in looseness=1.5] (-0.2cm,0.5cm) to [out=90,in=135,out looseness=1.5](v4)
	[-] (v2)  to [out=45,in=90,in looseness=1.5] (4.2cm,0.5cm) to [out=-90,in=60,out looseness=1.5](v5)
	[-] (v1)  to [out=-30,in=180,in looseness=1] (2cm,-0.7cm) to [out=0,in=-150,out looseness=1](v5);

	\node[draw, color=white] at (6,1){$\rightarrow$};
	\end{tikzpicture}
	\begin{tikzpicture}[scale=.6, every node/.style={draw,shape=circle,fill=black,scale=0.4}]
	\node (v1) at (0,0) {};
	\node (v2) at (1,2) {};
	\node (v3) at (2,0) {};
	\node (v4) at (3,2) {};
	\node (v5) at (4,0) {};
	\node (v6) at (5,2) {};
	\draw
	(v1)--(v2)--(v3)--(v4)--(v5)--(v6)
	(v2)--(v4)--(v6)
	(v1)--(v3)--(v5)
	[-] (v1)  to [out=90,in=180,in looseness=1] (2cm,3cm) to [out=0,in=135,out looseness=1](v6)
	[-] (v2)  to [out=30,in=180,in looseness=1] (3cm,2.5cm) to [out=0,in=150,out looseness=1](v6)
	[-] (v1)  to [out=-30,in=180,in looseness=1] (2cm,-0.7cm) to [out=0,in=-150,out looseness=1](v5);
	\end{tikzpicture}
	\caption[Double triangle expansion on $K_5$]{The graph $\hat{Z}_3\cong K_5$ \emph{(left)} and $\hat{Z}_4$ \emph{(right)}. The graph $\hat{Z}_4$ can be obtained from $\hat{Z}_3$ by double triangle expansion of any triangle.}
	\label{fig:k5dte}
\end{figure}
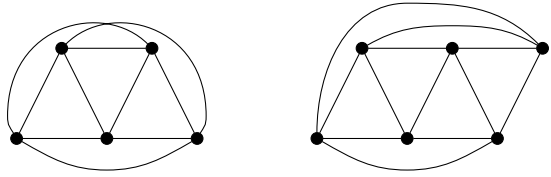

\subsection{Structure of the class of $K_5$ descendants}
In this work, we examine the set of graphs obtainable from $K_5$ by double triangle expansion. First we note that as this operation preserves the property of 4-regularity, each $K_5$-descendant is a 4-regular graph. Figure~\ref{fig:K5familytree} illustrates the graphs that are obtainable with at most 4 applications of this operation. 
In this article, we will show the slightly surprising result that the minimum number of triangles in a descendant of $K_5$ is $4$. A key structural parameter is the difference between the number of vertices of a graph, and the number of triangles. We are able to enumerate sub-families along this parameter, up to the case when the number of vertices is at most $4$ more than the number of triangles.  
The counting proofs also characterize the $K_5$ descendants of these levels by how their triangles are partitioned into \emph{zigzag} subgraphs.  Many of these results first appeared in Laradji's M.Sc thesis~\cite{LaradjiDTDoK5}.  The generating function for the $K_5$ descendants whose number of vertices is four more than the number of triangles that we find agrees with an earlier conjecture of Laradji. 

\newcommand{\bottomheight}{-2cm}
\newcommand{\ppbmult}{0.33}
\newcommand{\prebottommult}{0.20}
\newcommand{\maxwidth}{15cm}
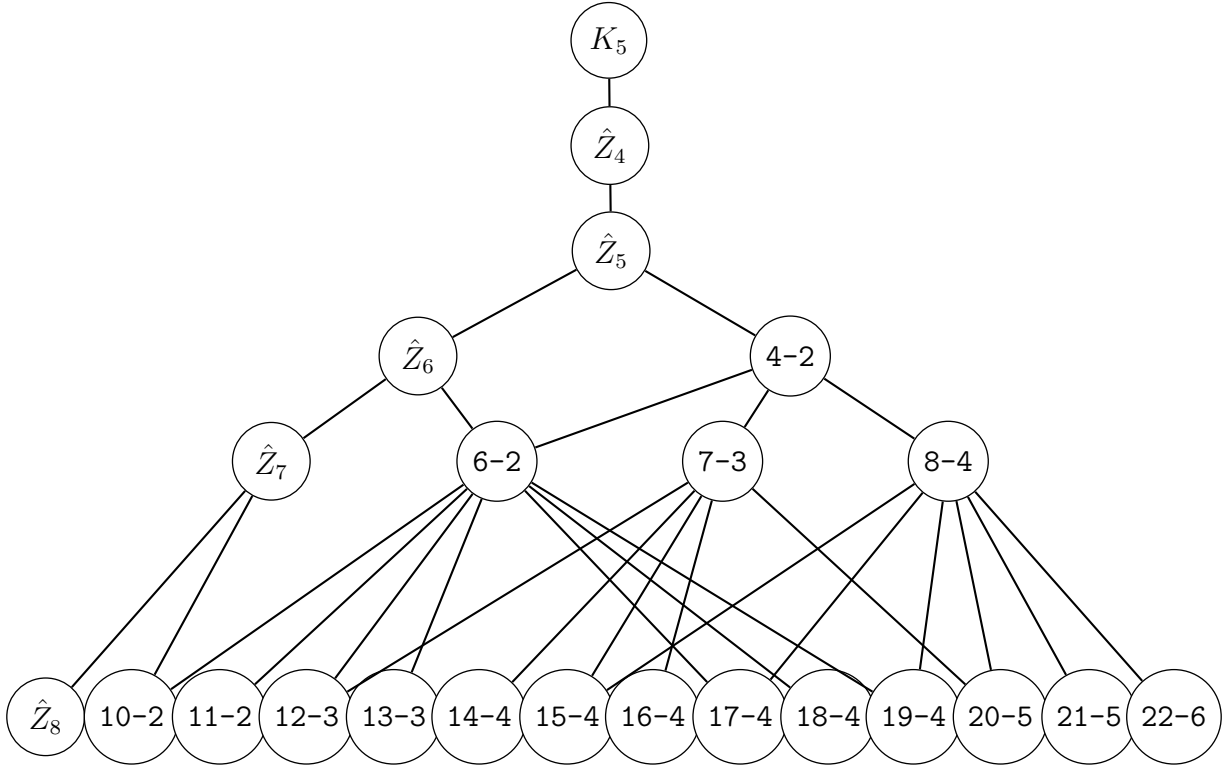
\begin{figure}
\begin{tikzpicture}
\GraphInit[vstyle=Normal]
\Vertex[L=\hbox{$K_5$},x=7.4878cm,y=7cm]{v0}
\Vertex[L=\hbox{$\hat{Z}_4$},x=7.5013cm,y=5.6cm]{v1}
\Vertex[L=\hbox{$\hat{Z}_5$},x=7.5175cm,y=4.2cm]{v2}
\Vertex[L=\hbox{$\hat{Z}_6$},x=\maxwidth*\ppbmult,y=2.8cm]{v3}
\Vertex[L=\hbox{$\hat{Z}_7$},x=\maxwidth*\prebottommult,y=1.4cm]{v4}
\Vertex[L=\hbox{$\hat{Z}_8$},x=0.0cm,y=\bottomheight]{v5}
\Vertex[L=\hbox{$\text{\texttt{10{-}2}}$},x=1.1422cm,y=\bottomheight]{v6}
\Vertex[L=\hbox{$\text{\texttt{11{-}2}}$},x=2.3136cm,y=\bottomheight]{v7}
\Vertex[L=\hbox{$\text{\texttt{12{-}3}}$},x=3.4669cm,y=\bottomheight]{v8}
\Vertex[L=\hbox{$\text{\texttt{13{-}3}}$},x=4.6268cm,y=\bottomheight]{v9}
\Vertex[L=\hbox{$\text{\texttt{14{-}4}}$},x=5.7638cm,y=\bottomheight]{v10}
\Vertex[L=\hbox{$\text{\texttt{15{-}4}}$},x=6.932cm,y=\bottomheight]{v11}
\Vertex[L=\hbox{$\text{\texttt{16{-}4}}$},x=8.0719cm,y=\bottomheight]{v12}
\Vertex[L=\hbox{$\text{\texttt{17{-}4}}$},x=9.2339cm,y=\bottomheight]{v13}
\Vertex[L=\hbox{$\text{\texttt{18{-}4}}$},x=10.402cm,y=\bottomheight]{v14}
\Vertex[L=\hbox{$\text{\texttt{19{-}4}}$},x=11.5365cm,y=\bottomheight]{v15}
\Vertex[L=\hbox{$\text{\texttt{20{-}5}}$},x=12.695cm,y=\bottomheight]{v16}
\Vertex[L=\hbox{$\text{\texttt{21{-}5}}$},x=13.8729cm,y=\bottomheight]{v17}
\Vertex[L=\hbox{$\text{\texttt{22{-}6}}$},x=15.0cm,y=\bottomheight]{v18}
\Vertex[L=\hbox{$\text{\texttt{4{-}2}}$},x=2*\maxwidth*\ppbmult,y=2.8cm]{v19}
\Vertex[L=\hbox{$\text{\texttt{6{-}2}}$},x=2*\maxwidth*\prebottommult,y=1.4cm]{v20}
\Vertex[L=\hbox{$\text{\texttt{7{-}3}}$},x=3*\maxwidth*\prebottommult,y=1.4cm]{v21}
\Vertex[L=\hbox{$\text{\texttt{8{-}4}}$},x=4*\maxwidth*\prebottommult,y=1.4cm]{v22}
\Edge[](v0)(v1)
\Edge[](v1)(v2)
\Edge[](v2)(v3)
\Edge[](v2)(v19)
\Edge[](v3)(v4)
\Edge[](v3)(v20)
\Edge[](v4)(v5)
\Edge[](v4)(v6)
\Edge[](v19)(v20)
\Edge[](v19)(v21)
\Edge[](v19)(v22)
\Edge[](v20)(v6)
\Edge[](v20)(v7)
\Edge[](v20)(v8)
\Edge[](v20)(v9)
\Edge[](v20)(v13)
\Edge[](v20)(v14)
\Edge[](v20)(v15)
\Edge[](v21)(v8)
\Edge[](v21)(v10)
\Edge[](v21)(v11)
\Edge[](v21)(v12)
\Edge[](v21)(v16)
\Edge[](v22)(v11)
\Edge[](v22)(v13)
\Edge[](v22)(v15)
\Edge[](v22)(v16)
\Edge[](v22)(v17)
\Edge[](v22)(v18)

\end{tikzpicture}
\caption[$K_5$ Family Tree]{$K_5$ descendants of up to order $10$. Two graphs are adjacent if and only if one can be obtained from the other by a double triangle expansion. The graphs are sorted by order from top to bottom, by level (see Section~\ref{sec:enumerative}) from left to right, and are labelled as given by the programs described in \cite{LaradjiDTDoK5} and which can be obtained from \cite{code}. The node labelling indicates a global identifier and the level of the graph: [global index]-[level].}
\label{fig:K5familytree}
\end{figure}

\subsection{Quantum field theoretic motivation}

To understand the interest of $K_5$ descendants for quantum field theory, first consider any connected $4$-regular graph $G$.  If we remove any one vertex of $G$ then we can view the result as a 4-point Feynman diagram in $\phi^4$ theory and we call such a graph a \emph{decompletion} of $G$.  Different decompletions will typically be non-isomorphic but $G$ can be uniquely reconstructed from any of its decompletions.  We will say that $G$ is the \emph{completion} of any of its decompletions.  

For those not familiar with Feynman diagrams, briefly: The edges of the graphs represent particles; the vertices particle interactions.  This particular quantum field theory has only a quartic interaction (that is the $4$ in $\phi^4$), and so all vertices must have degree 4.  When decompleting, we take the removal of the vertex to leave dangling ends of the edges incident to that vertex.  These dangling ends are called external edges and represent particles entering or exiting the system.  The purpose of Feynman diagrams is that they index certain integrals known as Feynman integrals, and summing all Feynman integrals corresponding to Feynman diagrams with some fixed external edges calculates the amplitude of the process with those external edges.  For more on perturbative quantum field theory see \cite{iz}.

Feynman integrals are complicated for many different reasons.  The integrals are typically divergent, leading to the need for renormalization (which we will not discuss further here, see \cite{ck0, patras} for some algebraic perspectives).  Even with this sorted out they are often difficult to compute, even numerically.  Their dependence on physical parameters such as masses of the particles and the momenta of the incoming and outgoing particles are complicated.  The kinds of functions you need to represent them are number theoretically sophisticated, beginning with multiple polylogarithms and moving into elliptic generalizations of polylogarithms and ultimately surely beyond.

We can ignore the dependence on parameters and issues of renormalization by setting all parameters to zero and taking the residue at the divergence.  The resulting integral is called the \emph{Feynman period} \cite{Sphi4} and is a nice mathematical object.  Even with all these sources of complexity removed the Feynman period is still a rich and interesting object.  It is defined as follows:

Let $H$ be a decompletion of $G$.  Assign a variable $a_e$ to each edge $e \in E(H)$ and define the (dual) \emph{Kirchhoff polynomial} or first Symanzik polynomial to be
\[
\Psi_H = \sum_{T} \prod_{e\not\in T}a_e,
\]
where the sum runs over all spanning trees of $H$.  For example, the Kirchhoff polynomial of a 3-cycle with edge variables $a_1$, $a_2$, $a_3$ is $a_1+a_2+a_3$ since removing any one edge of a cycle gives a spanning tree of the cycle.

Then the \emph{Feynman period} is
\[
\int_{a_e\geq 0} \frac{\Omega}{\Psi_H^2},
\]
where $\Omega = \sum_{i=1}^{|E(H)|}(-1)^ida_1\cdots da_{i-1}da_{i+1}\cdots da_{|E(H)|}$.  This integral converges provided $G$ is at least \emph{internally 6-edge connected}, which means that any way of removing fewer than 6 edges from $G$ either leaves $G$ connected or breaks $G$ into two components one of which is a single vertex. This corresponds to $H$ as a Feynman diagram having no subdivergences.

The Feynman period is a sensible algebro-geometric, or even motivic, object \cite{bek,BrSinform,Brbig,Brcosmic,Mmotives,Sphi4} which still captures some of the richness of Feynman integrals.  The numbers obtained from these integrals are number theoretically interesting, see \cite{bek,bkphi4,BrSinform,Sphi4,Snumbers}.

Additionally, two graphs with the same completion have the same Feynman period, see \cite{Sphi4}, and so while the actual integral of the Feynman period is defined in terms of a decompleted graph it makes sense to think of the Feynman period as an invariant of the completed graph.

\subsection{Kontsevich's conjecture}

From an algebro-geometric perspective, what numbers come out of a Feynman period is controlled by the geometry of the algebraic variety defined by the vanishing of the Kirchhoff polynomial.  Another way to access features of this geometry is by looking at the Kirchhoff polynomial over finite fields.  If the Feynman period is nice then the point counts over the finite fields $\mathbb{F}_p$ as a function of $p$ should be nice and vice versa.
More specifically, for a prime $p$ we will use the notation $[\Psi_H]_p$ for the number of points in the affine variety of $\Psi_H$ over $\mathbb{F}_p$.  
Inspired by known Feynman periods at the time being multiple zeta values, Kontsevich informally conjectured that $[\Psi_H]_p$ should be a polynomial in $p$.  This turned out to be very false \cite{BrBe}.  However, how badly Kontsevich's conjecture fails for a given graph turns out to be very insightful.

Thus, in order to better understand the Feynman period, Schnetz \cite{SFq} introduced the $c_2$ invariant.

\begin{definition}
  Let $H$ be a connected graph with at least 3 vertices, then the \emph{$c_2$-invariant} of $H$ at $p$ is
  \[
  c_2^{(p)}(H) = \frac{[\Psi_H]_p}{p^2} \mod p.
  \]
\end{definition}
That this is well defined is proved in \cite{SFq}.  If Kontsevich's conjecture were true for $H$ then $c_2^{(p)}(H)$ would be a constant, independent of $p$, and in particular would be the quadratic coefficient of the point count polynomial.

The $c_2$ invariant has interesting properties \cite{BrS,BrSY,Dc2,D4face}, yields interesting sequences in $p$ such as coefficient sequences of modular forms \cite{BrS3,Lmod}, and predicts properties of the Feynman period \cite{BrS,BrSY,SFq}.  The $c_2$ invariant is conjectured to be the same for graphs with the same completion \cite{BrS}.

\medskip

Returning now to the $K_5$ descendants, every $K_5$ descendant is $4$-regular and internally 6-edge-connected (discussed further on page \pageref{defn:completedprimitive}), so we can ask about the Feynman periods and the $c_2$ invariants of their decompletions.\\

Every decompletion of $K_5$ is $K_4$ which can be computed directly to have $c_2^{(p)}(K_4) = -1$ for all $p$.  The $c_2$ invariant is preserved by double triangle expansion \cite{BrS}, even when the double triangle in the completed graph involves the vertex that is removed upon decompletion \cite{FInvestc2}. Therefore, every decompletion of a $K_5$ descendant has $c_2^{(p)} = -1$ for all $p$.


More surprisingly, Brown and Schnetz, based on exhaustive calculation up to 10 loops\footnote{The \emph{loop number} of a graph is the dimension of its cycle space.}, conjectured \cite[Conjecture 25]{BrS3} that the $c_2$-invariant of a 4-point graph is $-1$ if and only if the graph is a decompletion of a $K_5$ descendant.  Furthermore, they observed that the only constant $c_2$-invariants which appear in their data are $0$ and $-1$, where $0$ corresponds to a drop in transcendental weight.  Note that with any finite number of primes, we cannot rule out large constant values, as by the remainder theorem any finite set of residues at different primes can arise from some fixed integer.  However the absence of any other small constant values in the data is suggestive that it may be the case that no constants other than $0$ and $-1$ are possible.  This is indicative of an overall sparsity of sequences which appear; as a further example, Brown and Schnetz call a sequence quasi-constant if it is constant for almost every prime or after a finite field extension and they only find three other quasi-constant sequences. On the other hand, going to arbitrary high loop order, sparsity does not seem evident \cite{Yprefix}.  This interplay between loop order and sequences appearing as $c_2$ invariants is likely to be important.

Returning to the concerns of the present paper, the computed data on $c_2$ invariants supports that the $K_5$ descendants form a very special class with this particular constant $c_2$.

The best hope for a resolution of Brown and Schnetz' conjecture is a better structural understanding of $K_5$ descendants, as the most likely way forward on the conjecture would be theorems saying that $c_2$ equal to $-1$ implies something on the structure of the graph.  This is likely to be very difficult as so far we only have results in the other direction and it is not clear what levers we could apply.  A good structural understanding of the $K_5$ descendants would give a strong hint of what such a theorem could look like and would be also be necessary for a proof of the conjecture following this plan.

Enumerative and asymptotic results on $K_5$ descendants are also interesting in view of Brown and Schnetz' conjecture as they help us understand the number of $K_5$ descendants relative to all completed $\phi^4$ graphs or other special classes of completed $\phi^4$ graphs.  Additionally the form of the generating functions tells us something more indirect about the structure of these graphs; we get rational generating functions for the special cases we can enumerate below showing that these classes are relatively tame.

More broadly we can also simply view Brown and Schnetz' conjecture as evidence that the class of $K_5$ descendants is a rich and interesting class worthy of study for its own sake.

\subsection{Article organization}

In Section~\ref{sec:prelim}, we will define what we need for the double triangle operations and summarize what is already known about the properties of double triangle transformations.  Section~\ref{sec:prelim} describes an important structural element: zigzag graphs.  Section~\ref{sec:enumerative} provides enumeration formulas for $K_5$ descendants with at most 4 more vertices than triangles.  The main results of that section are Propositions~\ref{prop:level0}, \ref{prop:level1}, \ref{prop:level2}, \ref{prop:level3}, and \ref{prop:level4}, enumerating $K_5$ descendants with $L$ more vertices than triangles for $L=0,1,2,3,4$ respectively.  The process whereby these graph classes were enumerated is automatable.  

Section~\ref{sec:results} contains the proof that the minimum number of triangles a $K_5$ descendant can have is 4 (the naive bound would be only 2).  This is done by first introducing the chain vector, an encoding of the lengths of the maximal zigzag subgraphs, then by considering how triangles in the neighbourhood of a double triangle operation can be changed, and then finally using all this to prove the result. 

\subsection{Acknowledgements} We are grateful to various institutions and funding bodies that have provided support for this work: The Laboratoire Bordelaise en Recherches Informatiques (LaBRI) at Universit\'e de Bordeaux,  where a significant part of the work was completed; we thank Adrian Tanasa in particular for his help;  KY is supported by a Humboldt fellowship from the Alexander von Humboldt foundation, in addition to NSERC and the Canada Research Chair program; MM is partially supported by NSERC Discovery Grant;  ML was supported by NSERC and Simon Fraser University.

\section{Double triangle operations and zigzag graphs}
\label{sec:prelim}
\subsection{Double triangle operations}

In this section, double triangle reduction (DTR) (Figure~\ref{fig:dtr}) and its inverse operation, double triangle expansion (DTE) (Definition~\ref{defn:dte} and Figure~\ref{fig:DTEtriangleandanedge}), are discussed. A \emph{double triangle} is the complete tripartite graph $K_{2,1,1}$, and a \emph{triple triangle} is $K_{3,1,1}$. A double triangle in a graph $G$ is \emph{proper} if it is not a subgraph of a triple triangle.


In \emph{double triangle reduction} of a graph $G$, a proper double triangle $(v_1,v_2,v_3,v_4)$, with $(v_1,v_2,v_3)$ and $(v_2,v_3,v_4)$ triangles, is turned into a triangle $(v_1, v_2, v_4)$ by removing the edges $(v_1, v_3), (v_2, v_3)$, and $(v_4, v_3)$, identifying the vertices $v_2$ and $v_3$, and adding the edge $(v_1,v_4)$ \cite[Definition 2.18]{Sphi4}\label{defn:dtr}. Clearly, the resulting graph from a DTR depends on the choice of double triangle.

In this paper, we restrict DTR to proper double triangles in order to simplify the statements of results like Theorem~\ref{thm:ancestor} and to be consistent with other sources such as \cite{Sphi4}.


\begin{figure}
	\newcommand{\belowyshift}{-0.3cm}
	\begin{subfigure}{\textwidth}
		\centering
		\begin{tikzpicture}[every node/.style={draw,circle,very thick, fill=black, minimum size=4pt, inner sep=0pt}]
		\node[label={$v_2$}] (v2) at (0,2) {};
		\node[label={[below,yshift=\belowyshift]$v_1$}] (v1) at (0,0) {};
		\node[label={[below,yshift=\belowyshift]$v_4$}] (v3) at (2,0) {};
		\node[label={$v_3$}] (v4) at (2,2) {};
		\draw
		(v4)--(v1)--(v2)--(v4)
		(v4)--(v3)--(v2)
		(-0.5,2.5)--(v2)
		(-0.5,0)--(v1)--(-0.5,0.5)
		(2.5,0.5)--(v3)--(2.5,0)
		(v4)--(2.5,2.5)
		;
		\end{tikzpicture}
		\begin{tikzpicture}[scale=0.9]
		\node (v1) at (0,0) {};
		\node (v2) at (1,2) {};
		\draw[->]
		(0.25,2)--(0.75,2);
		\end{tikzpicture}
		\begin{tikzpicture}[every node/.style={draw,circle,very thick, fill=black, minimum size=4pt, inner sep=0pt}]
		\node[label={$v_2$}] (v2) at (0,2) {};
		\node[label={[below,yshift=\belowyshift]$v_1$}] (v1) at (0,0) {};
		\node[label={[below,yshift=\belowyshift]$v_4$}] (v3) at (2,0) {};
		\node[label={$v_3$}] (v4) at (2,2) {};
		\draw
		(v1)--(v2)
		(v3)--(v2)
		(-0.5,2.5)--(v2)
		(-0.5,0)--(v1)--(-0.5,0.5)
		(2.5,0.5)--(v3)--(2.5,0)
		(v4)--(2.5,2.5)
		;
		\end{tikzpicture}
		\begin{tikzpicture}[scale=0.9]
		\node (v1) at (0,0) {};
		\node (v2) at (1,2) {};
		\draw[->]
		(0.25,2)--(0.75,2);
		\end{tikzpicture}
		\begin{tikzpicture}[every node/.style={draw,circle,very thick, fill=black, minimum size=4pt, inner sep=0pt}]
		\node[label={$v_5$}] (v5) at (1,2) {};
		\node[label={[below,yshift=\belowyshift]$v_1$}] (v1) at (0,0) {};
		\node[label={[below,yshift=\belowyshift]$v_4$}] (v4) at (2,0) {};
		\draw
		(v1)--(v5)
		(v4)--(v5)
		(0.5,2.5)--(v5)--(1.5,2.5)
		(-0.5,0)--(v1)--(-0.5,0.5)
		(2.5,0.5)--(v4)--(2.5,0)
		;
		\end{tikzpicture}
		\begin{tikzpicture}[scale=0.9]
		\node (v1) at (0,0) {};
		\node (v2) at (1,2) {};
		\draw[->]
		(0.25,2)--(0.75,2);
		\end{tikzpicture}
		\begin{tikzpicture}[every node/.style={draw,circle,very thick, fill=black, minimum size=4pt, inner sep=0pt}]
		\node[label={$v_5$}] (v5) at (1,2) {};
		\node[label={[below,yshift=\belowyshift]$v_1$}] (v1) at (0,0) {};
		\node[label={[below,yshift=\belowyshift]$v_4$}] (v4) at (2,0) {};
		\draw
		(v1)--(v4)
		(v1)--(v5)
		(v4)--(v5)
		(0.5,2.5)--(v5)--(1.5,2.5)
		(-0.5,0)--(v1)--(-0.5,0.5)
		(2.5,0.5)--(v4)--(2.5,0)
		;
		\end{tikzpicture}
	\end{subfigure}
	\caption[Double triangle reduction]{The operation $\mbox{DTR}((v_1,v_2,v_3,v_4))$ is shown here. In the intermediate step, vertices $v_2,v_3$ are identified to create $v_5$.}
	\label{fig:dtr}
\end{figure}
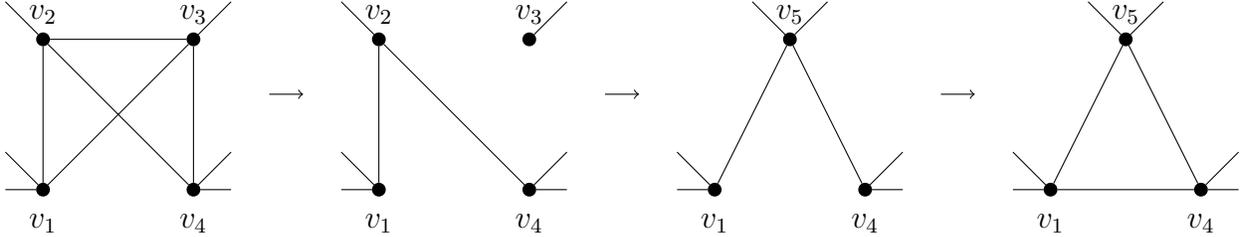


 
If $H$ is graph $G$ after one DTE, we call $H$ a \emph{child} of $G$, and $G$ a \emph{parent} of $H$. The child of a graph $G$ depends on the choice of triangle $T$ and the choice of a vertex $v$ in $T$.  A priori the child also depends on the choice of neighbour of $v$ but since we are working with unlabelled graphs the choice of neighbour is irrelevant as the two possibilities give isomorphic results.

%

One of the earliest occurrences of double triangle expansion is in a 2011 paper by Brown and Yeats \cite{BrY}, in which they proved a result on linear reducibility and weight drop. A Feynman graph is \emph{linearly reducible} if its Feynman integral is linearly reducible. Roughly speaking, a Feynman integral is \emph{linearly reducible} if, for some ordering of the integration variables, it can be integrated iteratively using multiple polylogarithms \cite{MYlinreduc}.
A graph is said to have \emph{weight drop} if the weight of its period is less than the maximal transcendental weight, which is equal to $2\ell-3$ for a $\phi^4$-graph with loop number $\ell$. The \emph{transcendental weight} of a number $\nu$ is very roughly the minimum number of nested integrals in an integral expression, with rational integrand and limits, that evaluates to $\nu$~\cite{BrY}.


It has also been shown that double triangle expansion preserves the $c_2$-invariant \cite[Corollary 34, p16]{BrS} and even does so for what would be a double triangle in the completed graphs but where the vertex removed on decompletion is involved in the double triangle \cite{FInvestc2}. 




DTR and DTE have other interesting properties. Schnetz proved that DTR preserves completed primitiveness \cite[Proposition 2.19, p20-1]{Sphi4}\label{prop:preservecompprim}, and is commutative in the sense that the order in which we do double triangle reductions does not matter (see Figure \ref{fig:dtrscommute} which is adapted from \cite{Sphi4}) if the graph is completed primitive. A graph $G$ is \emph{completed primitive} if it is internally $6$-edge-connected \cite[p10]{Sphi4}. A graph is \emph{internally $k$-edge-connected} if the only way to disconnect it by removing $k-1$ or fewer edges is to separate off a single vertex.  A Feynman graph $\gamma$ is \emph{divergent} if its Feynman integral is divergent, and is \emph{primitive divergent} if it is divergent and no proper subset of its integration variables is divergent \cite[p5-6]{Ycombpers}. Schnetz proved in \cite[p10]{Sphi4} that $G$ is completed primitive if and only if some decompletion $\gamma$ is primitive divergent which occurs if and only if every decompletion is primitive divergent.\label{defn:completedprimitive}\\

DTRs also commute with an operation called the product split, which implies that any sequence of DTRs and product splits terminate at a unique \emph{ancestor}. The product split
is defined as follows. Suppose a completed primitive graph $G$ can be obtained by identifying two triangles from two completed primitive graphs $G_1,G_2$, respectively, and removing the triangle edges. Then, either of $G_1,G_2$ is a \emph{product split} of $G$, and $G$ is a \emph{product} of $G_1,G_2$. \label{defn:productsplit} Any $3$-connected completed primitive graph $G$ is a product of two completed primitive graphs \cite[Theorem 2.10, p14]{Sphi4}.

\begin{figure}
	\centering
	\begin{tikzpicture}[scale=.5, every node/.style={draw,shape=circle,fill=black,scale=0.4}]

	\node (v1) at (0,0) {};
	\node (v2) at (0,2) {};
	\node (v3) at (2,0) {};
	\node (v4) at (2,1) {};
	\node (v5) at (2,2) {};
	\draw
	(v1)--(v2)--(v3)--(v4)--(v5)--(v1)
	(v4)--(v2)--(v5)
	(v4)--(v1)--(v3)
	[-] (v5)  to [out=-30,in=90,in looseness=0.5] (2.5cm,1cm) to [out=-90,in=30,out looseness=.5](v3)
	;
	\end{tikzpicture}
	\hspace{0.2cm}
	\begin{tikzpicture}[scale=.5]
	\node at (0,2) {};
	\node at (0,1) {\huge{$\times$}};
	\end{tikzpicture}
	\hspace{0.2cm}
	\begin{tikzpicture}[scale=.5, every node/.style={draw,shape=circle,fill=black,scale=0.4}]

	\node (v1) at (0,0) {};
	\node (v2) at (0,2) {};
	\node (v3) at (-2,0) {};
	\node (v4) at (-2,1) {};
	\node (v5) at (-2,2) {};
	\draw
	(v1)--(v2)--(v3)--(v4)--(v5)--(v1)
	(v4)--(v2)--(v5)
	(v4)--(v1)--(v3)
	[-] (v5)  to [out=-150,in=90,in looseness=0.5] (-2.5cm,1cm) to [out=-90,in=150,out looseness=.5](v3)
	;
	\end{tikzpicture}
	\hspace{0.5 cm}
	\begin{tikzpicture}[scale=.5]
	\node (v1) at (0,2) {};\node  at (0,0) {};
	\node at (0,1) {$=$};
	\end{tikzpicture}
	\hspace{0.5 cm}
	\begin{tikzpicture}[scale=.5, every node/.style={draw,shape=circle,fill=black,scale=0.4}]

	\node (v1) at (0,0) {};
	\node (v2) at (0,2) {};
	\node (v3) at (2,0) {};
	\node (v4) at (2,1) {};
	\node (v5) at (2,2) {};
	\node (v6) at (4,0) {};
	\node (v7) at (4,2) {};
	\draw
	(v1)--(v2)--(v3)
	(v5)--(v1)
	(v4)--(v2)--(v5)
	(v4)--(v1)--(v3)
	(v6)--(v7)
	(v4)--(v6)--(v3)
	(v6)--(v5)--(v7)
	(v4)--(v7)--(v3)
	;
	\end{tikzpicture}
	\caption{The product joins two graphs each with a distinguished triangle by identifying the triangles and then removing the identified edges.  Here it is illustrated with $K_5\times K_5$.  The inverse operation is the product split.  The product split is important for defining the ancestor of a graph.}
	\label{fig:K5timesK5}
\end{figure}
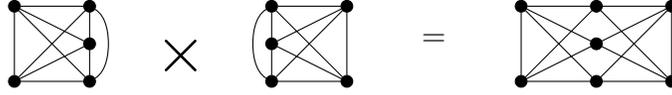

A completed primitive graph is \emph{reducible} if it has vertex-connectivity 3, and is \emph{irreducible} otherwise \cite[Definition 2.9, p14]{Sphi4}. If $G$ is a reducible completed primitive graph, then $G$ is the product of two completed primitive graphs $G_1,G_2$. Furthermore, the period of $G$ is the product of the periods of $G_1,G_2$ \cite[Theorem 2.10, p14]{Sphi4}. An example of a product split is shown in Figure \ref{fig:K5timesK5}.

\begin{thm}[Schnetz. {\cite[Definition 2.23, p22]{Sphi4}}]
	Let $G$ be a completed primitive graph. DTRs and product splits of $G$ commute, and any sequence of DTRs and product splits terminates at a product of double-triangle-free irreducible graphs.
	\label{thm:ancestor}
\end{thm}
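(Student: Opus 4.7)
The plan is to prove this by combining a local confluence analysis with a termination argument, so that Newman's lemma yields existence of a unique terminal state (the ancestor). The statement has two parts: (i) all pairs of operations (DTR/DTR, split/split, DTR/split) commute when both are applicable, and (ii) the rewriting process terminates in a product of double-triangle-free irreducible completed primitive graphs.

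I would begin with commutativity. The DTR/DTR case is essentially what Schnetz already depicts in the figure cited in the excerpt (Figure~\ref{fig:dtrscommute}-style reasoning); if the two proper double triangles are vertex-disjoint the operations act on disjoint neighbourhoods, and if they overlap a direct local case analysis checks that performing the two reductions in either order produces isomorphic graphs, using that the proper condition rules out a triple triangle. For split/split commutativity, two product splits correspond to two separating triangles that together with internal $6$-edge-connectivity behave like nested or laminar $3$-cuts; either one split lives inside a piece of the other, or the two triangles are disjoint, and a small argument shows both orders yield the same unordered multiset of completed primitive pieces. The DTR/split commutativity is the new case and the key to the theorem: given a proper double triangle $D$ and a separating triangle $T$ defining a product split, one verifies that $D$ lies entirely in one of the two pieces produced by splitting at $T$ (otherwise the $6$-edge-connectivity across $T$ would be violated, since $D$ contains too many edges to straddle $T$ without creating a small edge-cut), and conversely that $T$ remains a separating triangle after the DTR at $D$. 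With $D$ and $T$ effectively disjoint in this sense, the two operations commute directly.

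For termination I would use the loop number $\ell$. A DTR decreases $\ell$ of the affected piece by $1$ (it removes $3$ edges, adds $1$, and contracts one pair of vertices). A product split satisfies $\ell(G_1)+\ell(G_2)=\ell(G)+4$, so the total loop number of the multiset grows by a bounded amount per split; but since each completed primitive piece must contain $K_5$ as a minor and so has $\ell \geq 6$, the inequality $6k \leq \ell(G)+4(k-1)$ on the number $k$ of pieces after $k-1$ splits bounds the number of splits by $(\ell(G)-4)/2$. DTRs strictly decrease the vertex count of the piece they act on, so within each piece DTRs terminate. Together this yields strong normalization of the combined rewriting system.

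Finally, I would characterize the normal form: if no DTR is applicable to a piece, it has no proper double triangle and, since the proper condition only excludes the triple-triangle case (which is itself reducible by a DTR on any of its constituent double triangles, so would have already been reduced), the piece is double-triangle-free; if no product split is applicable it is irreducible in Schnetz's sense (vertex-connectivity $\geq 4$). Commutativity plus strong termination then gives confluence by Newman's lemma, so the terminal multiset — the \emph{ancestor} — is unique up to isomorphism and reordering. The main obstacle I anticipate is the DTR/split commutativity case; in particular, ruling out pathological overlaps between $D$ and $T$ requires carefully invoking internal $6$-edge-connectivity and checking that the properness of the double triangle is preserved by the split, so that the DTR applied in either order really is defined and produces matching results.
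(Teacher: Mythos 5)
The paper does not actually prove Theorem~\ref{thm:ancestor}: it is imported from Schnetz \cite{Sphi4} (hence the citation in the theorem header), and the only related material here is Figure~\ref{fig:dtrscommute}, which illustrates the overlapping DTR/DTR case. So your proposal is being measured against a citation rather than a proof. As an independent sketch, your overall architecture --- local confluence of all pairs of operations, a termination argument, and Newman's lemma --- is the right one, and your termination argument is essentially correct: a DTR drops the loop number of its piece by exactly one ($\Delta E=-2$, $\Delta V=-1$), a product split satisfies $\ell(G_1)+\ell(G_2)=\ell(G)+4$, and every completed primitive piece is a simple $4$-regular graph on at least five vertices, so $\ell=|E|-|V|+1=|V|+1\geq 6$ directly (no need to invoke $K_5$ minors), which bounds the number of splits and forces termination.

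There are two genuine flaws. First, your justification of the normal form contradicts the paper's own conventions: you claim a triple triangle ``is itself reducible by a DTR on any of its constituent double triangles,'' but DTR is only defined on \emph{proper} double triangles, and the double triangles inside a triple triangle are by definition not proper, so no DTR is available there. The correct mechanism is the other operation: in a $4$-regular graph of order greater than five, a triple triangle $K_{3,1,1}$ forces a $3$-vertex cut (its two degree-$4$ vertices have all their neighbours inside the subgraph), so such a piece is still reducible by a product split; a terminal piece therefore either is $K_5$ itself or contains no triple triangle, and in the latter case ``no proper double triangle'' does imply ``no double triangle.'' Second, in the DTR/split case the reason a proper double triangle $D$ cannot straddle the splitting triangle $T$ is the $3$-vertex cut, not internal $6$-edge-connectivity: the three cut vertices are pairwise non-adjacent in $G$ (the triangle edges are deleted when the product is formed), so at most the unique non-adjacent pair of $K_{2,1,1}$ can lie on the cut, and the remaining two vertices of $D$, being adjacent to each other, must lie on the same side. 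With these repairs, and with the deferred local case analysis for overlapping double triangles actually carried out, your sketch is a plausible reconstruction of Schnetz's argument.
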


\begin{figure}
	\centering
	\begin{tikzpicture}[every node/.style={draw,thick,circle,fill,inner sep=0.8pt}]
	\node (v0) at (0,0) {};
	\node (v1) at (-1,0) {};
	\node (v2) at (0,1) {};
	\node (v3) at (1,0) {};
	\node (v4) at (0,-1) {};
	\node (v5) at (1.5,0) {};
	\draw 
	(v0)--(v1)--(v2)--(v3)--(v4)--(v0)
	(v5)--(v3)--(v0)--(v2)
	(-1.3,-0.3)--(v1)--(-1.3,0.3)
	(v2)--(0,1.4)
	(-0.3,-1.3)--(v4)--(0.3,-1.3)
	(1.8,-0.3)--(v5)--(1.8,0.3)
	(v5)--(1.8,0)
	;
	\end{tikzpicture}
	\hspace{0.2cm}
	\begin{tikzpicture}
\node (v1) at (0,-1) {};
\draw[->]
(0,0)--(0.5,0);
\end{tikzpicture}
\hspace{0.2cm}
	\begin{tikzpicture}[every node/.style={draw,thick,circle,fill,inner sep=0.8pt}]
	\node (v1) at (-1,0) {};
	\node (v2) at (0,1) {};
	\node (v3) at (1,0) {};
	\node (v4) at (0,-1) {};
	\node (v5) at (1.5,0) {};
	\draw 
(v1)--(v2)--(v3)--(v4)--(v2)
(v5)--(v3)--(v1)
(-1.3,-0.3)--(v1)--(-1.3,0.3)
(v2)--(0,1.4)
(-0.3,-1.3)--(v4)--(0.3,-1.3)
(1.8,-0.3)--(v5)--(1.8,0.3)
(v5)--(1.8,0)
;
\end{tikzpicture}
\hspace{0.2cm}
	\begin{tikzpicture}
	\node (v1) at (0,-1) {};
\draw[->]
(0,0)--(0.5,0);
\end{tikzpicture}
\hspace{0.2cm}
	\begin{tikzpicture}[every node/.style={draw,thick,circle,fill,inner sep=0.8pt}]
\node (v1) at (-1,0) {};
\node (v2) at (0,1) {};
\node (v4) at (0,-1) {};
\node (v5) at (1.5,0) {};
\draw 
(v1)--(v2)--(v4)--(v1)
(v5)--(v2)
(-1.3,-0.3)--(v1)--(-1.3,0.3)
(v2)--(0,1.4)
(-0.3,-1.3)--(v4)--(0.3,-1.3)
(1.8,-0.3)--(v5)--(1.8,0.3)
(v5)--(1.8,0)
;
\end{tikzpicture}

	\caption{This figure shows $2$ DTR's done on two double triangles that share a triangle.  Notice that the order of the DTR's doesn't matter in this case; the middle graph is identical no matter which DTR is done first.  Even leaving the 4-regular situation, any additional edges adjacent to the middle vertex in the first graph would be adjacent to either the top or mid-right vertices in the second graph, depending on which DTR is done first, and then would in both cases be adjacent to the top vertex at the end, showing the DTRs commuting also in this case. In general, DTR's commute in the sense that if a completed primitive graph $G$ has distinct double triangles $D_1$ and $D_2$, then DTR of $D_1$ and $D_2$ in either order results in the same graph, see \cite{Sphi4}.}
\label{fig:dtrscommute}
\end{figure}
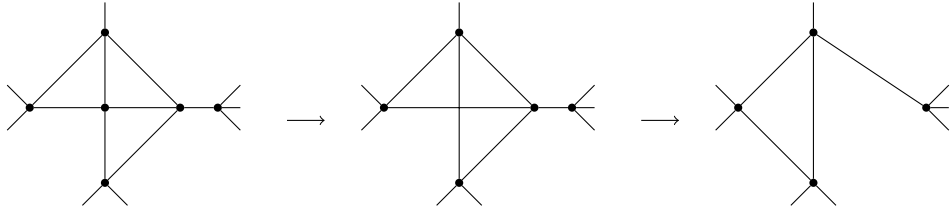

Let $A$ be the graph(s) that is the termination of a sequence of DTRs and product splits on a graph $G$ as in Theorem \ref{thm:ancestor}. We call $G$ a \emph{descendant} of $A$, and $A$ the \emph{ancestor} of $G$. The set of descendants of the ancestor $A$ is called the \emph{family}\footnote{We take the convention that any graph is a descendant of itself and is thus included in its family.} of $A$. To find the ancestor of a family, one could start with any member of that family, and perform DTRs and product splits until unable to do so. The remaining graph(s) is the ancestor of the family. Note that the ancestor is not necessarily one graph, but it is, in the most general case, a multiset. For instance, the ancestor of $K_5\times K_5$, shown in Figure \ref{fig:K5timesK5}, is the multiset $\{K_5,K_5\}$.\\

Since $K_5$ has no proper double triangles, no further DTRs can be done. Since it has no non-trivial $3$-vertex cut, it cannot be (non-trivially) product split. Thus, $K_5$ is the ancestor of its family. The $c_2$-invariant of any decompletion of $K_5$ is $c_2(K_4) = -1$ (for all primes) (this is a special case of the $c_2$ calculation for zigzags, see \cite{BrS}) so by the fact that DTE preserves the $c_2$ invariant, we have that for any decompletion $g$ of any descendant $G$ of $K_5$, $c_2(g) = c_2(K_4) = -1$.\\

\subsection{Zigzags}

One of the basic building blocks of $K_5$ descendants is the zigzag $Z_n^*$ (Figure \ref{fig:zigzags}), defined as the undirected graph with vertex set $\{1,2,\cdots,n+2\}$ and edge set 
\begin{align*}
\mbox{E}(Z_n):=\{(i,i+1): 1 \leq i \leq n+1\}\cup\{(i,i+2): 1\leq i \leq n\}.
\end{align*}
Adding additional edges to $Z_n^*$ to make it $4$-regular (there is a unique way to do so) results in the \emph{$1$-zigzag graph} $\hat{Z_n}$ (Figure \ref{fig:1zigzags}). Equivalently, $\hat{Z_n}$ can be defined as the circulant graph\footnote{The \emph{circulant graph} $C_n(i_1,i_2, \cdots, i_k)$ is the graph with vertex set $\{1,2,\cdots,n\}$ such that vertices $i$ and $j$ are adjacent if and only if $i-j\equiv\pm i_l \mod{n}$ for some $1\leq l \leq k$. \cite[Definition 1.1]{Ycirc}} $C_{n+2}(1,2)$ \cite{Ycirc}. The \emph{decompleted $1$-zigzag graph} $Z_n$ (Figure \ref{fig:decompletedzigzags}) is the decompletion of $\hat{Z_n}$, which is unique since circulant graphs are vertex-transitive.

\begin{figure}
\begin{subfigure}{.45\textwidth}
\center
	\begin{tikzpicture}[scale=.5,every node/.style={draw,shape=circle,fill=black,scale=0.4}]
	\node (v1) at (0,0) {};
	\node (v2) at (1,2) {};
	\node (v3) at (2,0) {};
	\node (v4) at (3,2) {};
	\node (v5) at (4,0) {};
	\draw
	(v1)--(v2)--(v3)--(v4)--(v5)
	(v2)--(v4)
	(v1)--(v3)--(v5);
	\end{tikzpicture}
	\hspace{1cm}
	\begin{tikzpicture}[scale=.5,every node/.style={draw,shape=circle,fill=black,scale=0.4}]
	\node (v1) at (0,0) {};
	\node (v2) at (1,2) {};
	\node (v3) at (2,0) {};
	\node (v4) at (3,2) {};
	\node (v5) at (4,0) {};
	\node (v6) at (5,2) {};
	\draw
	(v1)--(v2)--(v3)--(v4)--(v5)--(v6)
	(v2)--(v4)--(v6)
	(v1)--(v3)--(v5);
	\end{tikzpicture}	
	\caption{Zigzag graphs, $Z_3^*$ and $Z_4^*$}
\end{subfigure}
\begin{subfigure}{.5\textwidth}

	\centering
	\begin{tikzpicture}[scale=.5, every node/.style={draw,shape=circle,fill=black,scale=0.4}]

	\node (v2) at (1,2) {};
	\node (v3) at (2,0) {};
	\node (v4) at (3,2) {};
	\node (v5) at (4,0) {};
	\node (v6) at (5,2) {};
	\draw
	(v2)--(v3)--(v4)--(v5)--(v6)
	(v2)--(v4)--(v6)
	(v3)--(v5)
	[-] (v2)  to [out=30,in=180,in looseness=0.5] (3cm,2.5cm) to [out=0,in=150,out looseness=1](v6);
	\end{tikzpicture}
	\hspace{.5cm}	
	\begin{tikzpicture}[scale=.5, every node/.style={draw,shape=circle,fill=black,scale=0.4}]

	\node (v2) at (1,2) {};
	\node (v3) at (2,0) {};
	\node (v4) at (3,2) {};
	\node (v5) at (4,0) {};
	\node (v6) at (5,2) {};
	\node (v7) at (6,0) {};
	\draw
	(v2)--(v3)--(v4)--(v5)--(v6)--(v7)
	(v2)--(v4)--(v6)
	(v3)--(v5)--(v7)
	[-] (v2)  to [out=30,in=180,in looseness=0.5] +(3.5cm,0.7cm) to [out=0,in=75,out looseness=1](v7);
	\end{tikzpicture}
	
\caption{Decompleted 1-zigzag graphs, $Z_4$ and $Z_5$}
\vspace{5pt}
\end{subfigure}
\begin{subfigure}{.5\textwidth}
	\centering
	\begin{tikzpicture}[scale=.5, every node/.style={draw,shape=circle,fill=black,scale=0.4}]

	\node (v1) at (0,0) {};
	\node (v2) at (1,2) {};
	\node (v3) at (2,0) {};
	\node (v4) at (3,2) {};
	\node (v5) at (4,0) {};
	\node (v6) at (5,2) {};
	\draw
	(v1)--(v2)--(v3)--(v4)--(v5)--(v6)
	(v2)--(v4)--(v6)
	(v1)--(v3)--(v5)
	[-] (v1)  to [out=100,in=180,in looseness=1] (2cm,3cm) to [out=0,in=140,out looseness=1](v6)
	[-] (v2)  to [out=30,in=180,in looseness=0.5] (3cm,2.5cm) to [out=0,in=150,out looseness=1](v6)
	[-] (v1)  to [out=-30,in=180,in looseness=1] (2cm,-0.5cm) to [out=0,in=-150,out looseness=1](v5);
	\end{tikzpicture}
	\hspace{1cm}	
	\begin{tikzpicture}[scale=.5, every node/.style={draw,shape=circle,fill=black,scale=0.4}]

	\node (v1) at (0,0) {};
	\node (v2) at (1,2) {};
	\node (v3) at (2,0) {};
	\node (v4) at (3,2) {};
	\node (v5) at (4,0) {};
	\node (v6) at (5,2) {};
	\node (v7) at (6,0) {};
	\draw
	(v1)--(v2)--(v3)--(v4)--(v5)--(v6)--(v7)
	(v2)--(v4)--(v6)
	(v1)--(v3)--(v5)--(v7)
	[-] (v1)  to [out=117,in=180,in looseness=1.5] (2cm,3cm) to [in=150, out=0,out looseness=1.5](v6)
	[-] (v7)  to [out=63,in=0,in looseness=1.5] (4cm,3cm) to [in=30, out=180,out looseness=1.5](v2)
	[-] (v1)  to [out=-30,in=180,in looseness=0.5] (3cm,-0.7cm) to [out=0,in=-150,out looseness=0.5](v7);
	\end{tikzpicture}
	\caption{1-zigzag graphs, $\hat{Z_4}$ and $\hat{Z_5}$}
	
\end{subfigure}
\caption{Zigzag graphs and useful variants}
\label{fig:zigzags}
\label{fig:1zigzags}
\label{fig:decompletedzigzags}
\end{figure}
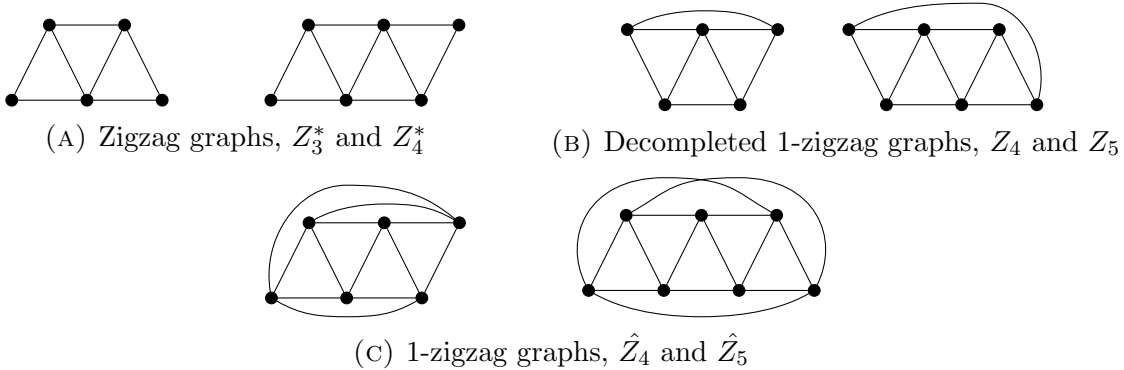

In the literature, the term zigzag usually refers to $Z_n$, such as in \cite{Ycirc, Szigzag, BrS}. We make the generalization here from zigzags to $n$-zigzag graphs (Figure \ref{fig:nzigzags}), as many $n$-zigzag graphs are $K_5$ descendants. In view of the generalization, we will refer to the zigzag graphs $Z_n$ as decompleted $1$-zigzag graphs and reserve the term zigzag alone to refer to $Z_n^*$.

\def\avar{1.5cm}

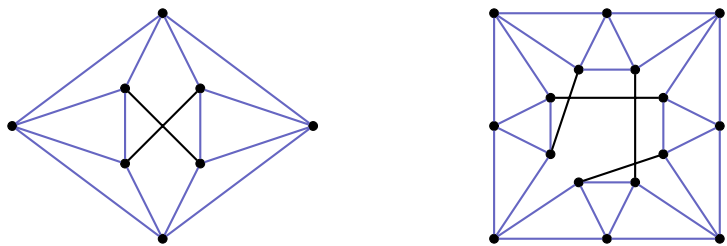
\begin{figure}
	\centering	
	\begin{tikzpicture}[rotate=90,every node/.style={draw,shape=circle,fill=black,scale=0.3}]
	\node (v1) at (0,0) {};
	\node (v2) at (1,-0.5) {};
	\node (v3) at (1.5,-2) {};
	\node (v4) at (2,-0.5) {};
	\node (v5) at (3,0) {};
	\node (v6) at (2,0.5) {};
	\node (v7) at (1.5,2) {};
	\node (v8) at (1,0.5) {};
	\draw[blue!60!black!60, thick]
	(v1)--(v2)--(v3)--(v4)--(v5)
	(v1)--(v3)--(v5)
	(v2)--(v4)
	(v5)--(v6)--(v7)--(v8)--(v1)
	(v5)--(v7)--(v1)
	(v6)--(v8);
	\draw[thick]
	(v4)--(v8)
	(v2)--(v6);
	\end{tikzpicture}
	\hspace{2cm}
	\begin{tikzpicture}[every node/.style={draw,shape=circle,fill=black,scale=0.3}]
	\node (1) at (0,0) {};
	\node (2) at (0.5*\avar,0.75*\avar) {};
	\node (3) at (0,\avar) {};
	\node (4) at (0.5*\avar,1.25*\avar) {};
	\node (5) at (0,2*\avar) {};
	\node (6) at (0.75*\avar,1.5*\avar) {};
	\node (7) at (\avar,2*\avar) {};
	\node (8) at (1.25*\avar,1.5*\avar) {};
	\node (9) at (2*\avar,2*\avar) {};
	\node (10) at (1.5*\avar,1.25*\avar) {};
	\node (11) at (2*\avar,1*\avar) {};
	\node (12) at (1.5*\avar,0.75*\avar) {};
	\node (13) at (2*\avar,0) {};
	\node (14) at (1.25*\avar,0.5*\avar) {};
	\node (15) at (\avar,0) {};
	\node (16) at (0.75*\avar,0.5*\avar) {};
	\draw[blue!60!black!60, thick]
	(1)--(2)--(3)--(4)--(5)--(6)--(7)--(8)--(9)--(10)--(11)--(12)--(13)--(14)--(15)--(16)--(1)
	(1)--(3)--(5)--(7)--(9)--(11)--(13)--(15)--(1)
	(2)--(4) (6)--(8) (10)--(12) (14)--(16);
	\draw[thick]
	(2)--(6) (4)--(10) (8)--(14) (12)--(16);
	\end{tikzpicture}
	\caption[n-zigzag graphs]{A 2-zigzag graph \emph{(left)} and a 4-zigzag graph \emph{(right)}.}
	\label{fig:nzigzags}
\end{figure}

The decompleted $1$-zigzag graphs $Z_n$ are a class of graphs that have been previously studied in the context of $\phi^4$-theory. In \cite{Szigzag}, Brown and Schnetz proved the closed form of the period of $Z_n$ for $n \geq 3$, originally conjectured by Broadhurst and Kreimer in \cite{bkphi4}. In \cite{BrS}, Brown and Schnetz proved that $c_2^{(p)}(Z_n)=-1$ for all primes $p$ and all $n \geq 3$. This makes them the only nontrivial infinite class for which the Feynman period is known \cite{Szigzag}, and one of the few infinite classes for which the $c_2$-invariant is known \cite{Ycirc,CYgrid}.

It is useful to name the different types of vertices in a zigzag $Z_n^*$. We call the degree $2$, $3$ and $4$ vertices of $Z_n^*$ the \emph{end}, \emph{chord}, and \emph{internal} vertices, respectively. A \emph{maximal} zigzag in a graph~$G$ is a zigzag subgraph $Z_n^*$ that is not a proper subgraph of any zigzag subgraph of~$G$. Maximal zigzags in $K_5$-descendants are either vertex-disjoint from other maximal zigzags or attached to other zigzags through their end vertices (see Lemma~\ref{lem zigzag shape} and Corollary~\ref{lem K5 desc is pseudo desc}). These groups of zigzags are called \emph{chains} (Figure~\ref{fig:chain332}), and are defined as follows.

\begin{figure}
	\centering	
	\begin{tikzpicture}[scale =.4]
	
	\coordinate (v1) at (0,0);
	\coordinate (v2) at (1,2);
	\coordinate (v3) at (2,0);
	\coordinate (v4) at (3,2);
	\coordinate (v5) at (4,0) ;
	\coordinate (v6) at (5,2);
	\coordinate (v7) at (6,0);
	\coordinate (v8) at (7,2);
	\coordinate (v9) at (8,0) ;
	\coordinate (v10) at (9,2);
	\coordinate (v11) at (10,0);
	\coordinate (v12) at (11,2);	
	
	\draw
	(v2)--(v3)--(v4)
	(v6)--(v7)--(v8)
	(v10)--(v12)
	(v1)--(v11)
	(v1)--(v2)--(v4)--(v5)--(v6)--(v8)--(v9)--(v10)--(v11)--(v12)
	;
	\foreach \x in  {1,2,...,12} {
	\filldraw(v\x) circle (6pt);
	}
	
	\end{tikzpicture}
	
	\caption[A $(3,3,2)$-chain]{A $(3,3,2)$-chain.}
	\label{fig:chain332}
\end{figure}
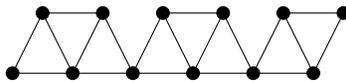

%

\begin{definition}[Chain]
	\label{defn:chain}
	Let $k, n_1, \cdots, n_k \in \mathbb{N}$. An open $(n_1, \cdots, n_k)$-chain is the graph formed by identifying an end vertex of $Z_{n_i}$ with an end vertex of $Z_{n_{i+1}}$, iteratively for each $i \in \{1, \cdots, k-1\}$. A closed $(n_1, \cdots, n_k)$-chain is the graph formed from an open $(n_1, \cdots, n_k)$-chain by identifying the remaining end vertex of $Z_{n_k}$ with the remaining end vertex of $Z_{n_1}$.  	
\end{definition}

For $n \geq 2$, an \emph{$n$-zigzag graph} is a closed $(k_1, \cdots, k_n)$-chain for some $k_1, \cdots, k_n \in \mathbb{N}$, with additional edges to make it $4$-regular\label{defn:nzigzag}. For a fixed $(k_1, \cdots, k_n)$, there are, in general, several $n$-zigzags, several of which might be descendants of $K_5$.

\section{Enumeration of $K_5$ descendants of small level}
\label{sec:enumerative}
Exhaustive enumeration of small $K_5$ descendants revealed apparent structure when the difference between the number of vertices and the number of triangles is fixed. Table~\ref{tbl:ordertri}, copied from~\cite{LaradjiDTDoK5} illustrates the sequences that motivate the following definition. We define the \emph{level} of a graph $G$, denoted $L(G)$, to be the number of vertices of $G$ minus the number of triangles of $G$.  
This turns out to be a reasonable approach, and in this section we describe how to 
derive explicit generating functions of the the sequences for $\mbox{L}(G)=0,1,2,3,4$. In each case they are rational functions with very simple denominators. The results are proved by a careful case analysis that we predict should be somewhat automatable towards the aim or determining generating functions of higher levels. We conjecture that for each level the generating function is a rational, with straightforward denominators. 

\begin{table}\center
	\caption{The number of non-isomorphic unlabelled $K_5$-descendants with $n$ vertices and exactly $t$ triangles. }
	\begin{tabular}{|c|c|c|c|c|c|c|c|c|c|c|c|}
			\hline
			$n\backslash t$ & {\bf 4} & \bf 5 & \bf 6 & \bf 7 & \bf 8 & \bf 9 & \bf 10 & \bf 11 & \bf 12 &\bf  13 & \bf 14 \\ \hline
			\bf 5 & \cellcolor{gray!10} 0 & 0 & 0 & 0 & 0 & 0 & 1 & 0 & 0 &0&0\\ \hline
			\bf 6 &  \cellcolor{gray!40}0 & \cellcolor{gray!10}0 &  0 & 0 & 1 & 0 & 0 & 0 & 0 &0&0\\ \hline
			\bf 7 & \cellcolor{gray!20}0 & \cellcolor{gray!40} 0 & \cellcolor{gray!10} 0 & \cellcolor{gray!50} 1 & 0 & 0 & 0 & 0 & 0 &0&0\\ \hline
			\bf 8 & \cellcolor{gray!60}0 & \cellcolor{gray!20} 0 & \cellcolor{gray!40}1 & \cellcolor{gray!10} 0 & \cellcolor{gray!50}1 & 0 & 0 & 0 & 0 &0&0\\ \hline
			\bf 9 & 0 & \cellcolor{gray!60}1 & \cellcolor{gray!20} 1 & \cellcolor{gray!40} 1 & \cellcolor{gray!10} 0 & \cellcolor{gray!50}1 & 0 & 0 & 0 &0&0\\ \hline
			\bf 10 & 1 & 2 & \cellcolor{gray!60}6 & \cellcolor{gray!20}2 & \cellcolor{gray!40} 2 & \cellcolor{gray!10} 0 & \cellcolor{gray!50} 1 & 0 & 0&0&0 \\ \hline
			\bf 11 & 3 & 8 & 19 & \cellcolor{gray!60}15 & \cellcolor{gray!20}4 & \cellcolor{gray!40} 2 & \cellcolor{gray!10} 0 & \cellcolor{gray!50} 1 & 0&0&0 \\ \hline
			\bf 12 & 8 & 37 & 88 & 76 & \cellcolor{gray!60}34 & \cellcolor{gray!20}7 & \cellcolor{gray!40}3 & \cellcolor{gray!10} 0 & \cellcolor{gray!50} 1 &0&0\\ \hline
			\bf 13 & 21 & 147 & 390 & 435 & 218 & \cellcolor{gray!60}61 & \cellcolor{gray!20}10 & \cellcolor{gray!40} 3 & \cellcolor{gray!10}0 & \cellcolor{gray!50} 1&0\\ \hline
			\bf 14 & 67 & 550 & 1758 & 2405 & 1576 & 505 & \cellcolor{gray!60}106 &  \cellcolor{gray!20}14 & \cellcolor{gray!40}4 & \cellcolor{gray!10} 0& \cellcolor{gray!50}1\\ \hline
		\end{tabular}
		
\smallskip
{\sl 
\begin{quote}
Notes: Each shaded diagonal sequence indicates enumeration data for a fixed level (The level of a graph is $n-t$). Additional details on the code that produced the data in the table are available in~\cite{LaradjiDTDoK5, code}.
\end{quote}}

\label{tbl:ordertri}
\end{table}

To prove the generating function formulas, we connect the number of triangles to zigzags and determine some key structural identities. These are used to divide the analysis into several cases. 

The first observation we need is that all triangles appearing in $K_5$ descendants appear in zigzags (potentially including zigzags with only one triangle).
\begin{lemma}\label{lem zigzag shape}\mbox{}
  \begin{itemize}
    \item If $G$ is a $K_5$ descendant of order greater than $5$, then neither $K_4$ nor the triple triangle
      $K_{3,1,1}$ are subgraphs of $G$.
    \item If $G$ is a $K_5$ descendant of order greater than~$6$, then the triangles of $G$ are partitioned into maximal zigzag subgraphs of $G$.  The zigzag subgraphs may be 1-zigzags or $Z_n^*$s, and the $Z_n^*$s may be linked at their ends into open or closed chains with at least two zigzag pieces in each closed chain that is not a 1-zigzag.  The maximal zigzags are otherwise disjoint. 
  \end{itemize}
\end{lemma}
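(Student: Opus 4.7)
My plan is to prove both parts by induction on $|V(G)|$, exploiting that every $K_5$ descendant of order $>5$ arises as a single DTE of a smaller descendant $G'$. Write $G=\mathrm{DTE}_{G'}(H)$ with new vertex $v_5$ adjacent in $G$ exactly to $\{v_1,v_2,v_3,v_4\}$, and recall that the edges $v_1v_3$ and $v_2v_4$ belong to $G'$ but not to $G$.

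For Part 1, the base case is $|V(G)|=6$, where $G=\hat{Z}_4=C_6(1,2)$; direct inspection rules out both $K_4$ and $K_{3,1,1}$. For the inductive step, suppose $G$ contains such a subgraph on a vertex set $S$. If $v_5\notin S$ then the required edges are all present in $G'$ as well (since the deleted edges $v_1v_3,v_2v_4$ cannot be among them, as they fail to be edges of $G$), contradicting the inductive hypothesis. If $v_5\in S$, then the other vertices of $S$ lie in $N_G(v_5)=\{v_1,v_2,v_3,v_4\}$, and one performs a case split on the role of $v_5$. For $K_4$, each of the four triples in $\{v_1,v_2,v_3,v_4\}$ demands one of $v_1v_3$ or $v_2v_4$ as an edge, which fails. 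For $K_{3,1,1}$, I would split on whether $v_5$ is a singleton (\emph{pole}) or in the 3-class (\emph{equator}). The pole case forces the four remaining vertices to equal $\{v_1,v_2,v_3,v_4\}$, and each of the four sub-cases requires a deleted edge. The equator case requires the two poles to be adjacent in $G$ and to lie in $\{v_1,v_2,v_3,v_4\}$, restricting them to $\{v_1,v_2\}$, $\{v_2,v_3\}$, $\{v_1,v_4\}$, or $\{v_3,v_4\}$; for the first two, the two poles have too few common neighbors in $G$ besides $v_5$ to supply the remaining two equator vertices, and for the last two, $v_2$ is a common neighbor of the two poles in $G'$, and together with the two purported equator vertices (which survive as common neighbors in $G'$) exhibits a $K_{3,1,1}$ already in $G'$, contradicting the inductive hypothesis.

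For Part 2, the first consequence of Part 1 is that every edge of $G$ lies in at most two triangles: three triangles on an edge $uv$ with distinct third vertices $w_1,w_2,w_3$ would span a $K_{3,1,1}$ with poles $u,v$. Hence every triangle is edge-adjacent to at most three other triangles, and each shared edge uniquely determines its neighbor triangle. I would then establish the zigzag partition by greedy extension: starting from any triangle $T$, iteratively adjoin any triangle sharing an edge with the current end, producing either an open $Z_n^*$ or a closed $\hat{Z}_n$. The only obstruction is branching, i.e., a triangle $\{a,b,c\}$ with all three edges shared and distinct third vertices $x,y,z$. When $x,y,z$ are pairwise adjacent, the six vertices $\{a,b,c,x,y,z\}$ form a $4$-regular component isomorphic to $\hat{Z}_4$, which is impossible for $|V(G)|>6$ and $G$ connected; when $x,y,z$ are pairwise non-adjacent, I would trace the configuration back through the last DTE and locate a $K_{3,1,1}$ in $G'$ on five suitably chosen vertices, contradicting Part 1 applied inductively.

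The main obstacle is the bookkeeping for Part 2: once branching is excluded, the partition into maximal zigzag subgraphs follows from the greedy extension, but one must still verify that distinct maximal zigzags share only end vertices, yielding the chain structure of Definition~\ref{defn:chain}. I would handle this by a second induction along DTE: the operation modifies the triangle $T_0$ targeted by DTE (inside some maximal zigzag $Z$ of $G'$) by inserting one new triangle containing $v_5$, and the pendant edge $v_2v_4$ either leaves the remaining zigzags of $G'$ intact or introduces a new chain junction at an end vertex, depending on the adjacency of $v_4$ to $v_1$ or $v_3$ in $G'$. Each sub-case is routine once Part 1 and the no-branching result are in hand.
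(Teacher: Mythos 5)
Your Part 1 is correct and takes a genuinely different route from the paper: you run a local induction along the last DTE, checking that any $K_4$ or $K_{3,1,1}$ in $G$ either avoids the new vertex $v_5$ (and hence persists in the parent, since the only edges of $G'$ missing from $G$ are $v_1v_3$ and $v_2v_4$, which are not edges of $G$) or forces one of those deleted edges / produces a $K_{3,1,1}$ in the parent. The paper instead argues globally: $K_4$ contradicts internal $6$-edge-connectivity (completed primitivity is preserved by DTE), and $K_{3,1,1}$ forces a $3$-vertex cut, which is incompatible with $K_5$ being the ancestor via Theorem~\ref{thm:ancestor}. Your version is more elementary and self-contained; the paper's version buys a reusable tool, namely that $K_5$ descendants of order $>5$ have no $3$-vertex cut, which it then reuses in Part 2.

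That reuse is exactly where your Part 2 has a genuine gap. To exclude a branching triangle $\{a,b,c\}$ with attached triangles $abx$, $bcy$, $caz$, your dichotomy on $x,y,z$ is not exhaustive (one or two of the pairs may be adjacent), and, more seriously, the plan to ``locate a $K_{3,1,1}$ in $G'$'' cannot work. A branching configuration contains no $K_{3,1,1}$ (in the octahedron every triangle branches, yet two adjacent vertices have only two common neighbours), and if the six vertices $a,b,c,x,y,z$ all avoid the DTE vertex $v_5$ then all nine edges of the configuration survive into $G'$ as the \emph{same} branching configuration --- no contradiction with Part 1 is reached. The paper's argument is that $4$-regularity forces $N(a)\cup N(b)\cup N(c)\subseteq\{a,b,c,x,y,z\}$, so $\{x,y,z\}$ is a $3$-vertex cut (the all-adjacent case being the octahedron, excluded by order), and this is killed by the same ancestor/product-split argument as for $K_{3,1,1}$. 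Since your Part 1 deliberately avoided that cut machinery, you would either need to import it here, or strengthen your induction hypothesis to exclude branching configurations outright and carry out the full case analysis of how a DTE site can meet such a configuration (with the order-$7$ graph $\hat Z_5$ as the new base case, since the order-$6$ octahedron does branch). Separately, your greedy extension does not address the paper's final case, that a single zigzag closed onto itself (not a $1$-zigzag) is excluded via a $2$-edge cut; this is needed for the ``at least two zigzag pieces in each closed chain'' clause of the statement.
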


\begin{proof}\mbox{}
  \begin{itemize}
  \item 	Observe that since $K_5$ is completed primitive and double triangle expansion preserves completed primitivity, all $K_5$-descendants are completed primitive. Let $G$ be a $K_5$-descendant of order $>5$. If $K_4$ is a subgraph of $G$, then $G$ has a non-trivial internal $4$-edge-cut, and so it is not internally $6$-edge-connected, and so $G$ is not completed primitive, a contradiction.
    
    Suppose now that $K_{3,1,1}$ is a subgraph of $G$. Then $G$ has a $3$-vertex-cut. By Theorem \ref{thm:ancestor}, we can find the ancestor of $G$ by performing all double triangle reductions first. Since $K_5$ has no $3$-vertex-cut, it cannot be the ancestor of $G$, a contradiction.
  \item $K_5$ descendants are 4-regular, connected, simple graphs and are also completed primitive.  4-regularity implies that at most six triangles are incident with any vertex $v$ of $G$.  If there are $5$ or more then $v$ and three of its neighbours form a $K_4$ subgraph contradicting the assumptions.  If there are $4$ triangles incident with $v$ then either there is a $K_4$, or there is a wheel with four spokes centred at $v$; in the second case either $G$ is the octahedron which is excluded by order, or $G$ has a nontrivial 4-edge-cut detaching the wheel and hence is not completed primitive, all of which lead to contradictions.

    Therefore, each vertex of $v$ is incident to at most three triangles.  Since there are no triple triangles, locally the triangles in the neighbourhood of $v$ have a zigzag form.  If there is a triangle with another triangle on each edge then $G$ has a 3-vertex cut, but as in the proof of the first point this is also impossible.  Therefore there is no branching of triangle configurations, and what remains is zigzags which may join at their ends.  Finally, suppose a zigzag has one end joined to its other end but is not a 1-zigzag.  Then $G$ has a 2-edge cut which again contradicts completed primitivity, so all closed chains have at least two zigzags.  Therefore, $G$ is as described in the statement.
  \end{itemize}
\end{proof}

\subsection{Levels 0 to 3}
The first two levels are straightforward, as there is exactly one graph of level 0 at every (sufficiently large) size, and none of level 1. For higher levels we first determine template shapes, and reduce the problem to the enumeration of integer partitions under restrictions on the number of parts. 

\begin{prop}\label{prop:level0}
  There is exactly one $K_5$ descendant of level zero for each order $\geq 7$. These are precisely the 1-zigzags of order $\geq 7$.
\end{prop}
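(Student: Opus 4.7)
My plan is to prove both containments. For the easy direction, I would induct on $n \geq 3$ to show that every 1-zigzag $\hat{Z}_n$ is a $K_5$ descendant of level $0$. The base case is $\hat{Z}_3 = K_5$. For the inductive step, Figure~\ref{fig:k5dte} exhibits $\hat{Z}_4$ as a DTE of $\hat{Z}_3$; by vertex-transitivity of the circulant $\hat{Z}_n = C_{n+2}(1,2)$, the analogous expansion at any consecutive triple $\{i, i+1, i+2\}$ of $\hat{Z}_n$ yields $\hat{Z}_{n+1}$. The level computation is immediate: $\hat{Z}_n$ has $n+2$ vertices and exactly $n+2$ triangles (one per consecutive triple modulo $n+2$), so $L(\hat{Z}_n) = 0$, and the order restriction $n+2 \geq 7$ picks out $n \geq 5$.

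For the converse, let $G$ be a $K_5$ descendant with $|V(G)| \geq 7$ and $L(G) = 0$. I would invoke the second bullet of Lemma~\ref{lem zigzag shape} to partition the triangles of $G$ into vertex-disjoint maximal zigzag subgraphs, each of which is either a 1-zigzag or a chain of $Z_n^*$'s (open, or closed with at least two pieces). A direct count using Definition~\ref{defn:chain} gives the vertex-to-triangle excess of each such piece: a 1-zigzag $\hat{Z}_m$ contributes $(m+2) - (m+2) = 0$; an open chain of $k$ pieces with zigzag sizes $n_1, \ldots, n_k$ identifies $k-1$ endpoint pairs and so contributes
\[
  \Bigl(\sum_i (n_i+2) - (k-1)\Bigr) - \sum_i n_i \;=\; k+1 \;\geq\; 2;
\]
and a closed chain of $k \geq 2$ pieces contributes $k \geq 2$ by the same computation with one additional identification. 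Writing $v_T$ for the number of vertices of $G$ that lie in some triangle, summing over maximal zigzags yields $v_T - t \geq 0$, with equality iff every maximal zigzag is a 1-zigzag.

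Since $|V(G)| \geq v_T$ and $v - t = 0$ by hypothesis, this forces $v = v_T$ and every maximal zigzag to be a 1-zigzag $\hat{Z}_m$; at least one such maximal zigzag exists because $v \geq 7$ gives $t \geq 7 > 0$. To finish, I would observe that every vertex of the subgraph $\hat{Z}_m \subseteq G$ already has degree $4$ within $\hat{Z}_m$, so $4$-regularity of $G$ forbids any further edges at those vertices, and connectivity of $G$ (inherited from $K_5$ via DTEs) then gives $G = \hat{Z}_m$. Uniqueness at each order follows because $\hat{Z}_m$ is determined by $m$ up to isomorphism. I expect the main obstacle to be the chain vertex-count: one must carefully match the endpoint identifications in Definition~\ref{defn:chain} against the triangles of each $Z_{n_i}^*$ and verify that no triangle is created or destroyed under joining, so that $v_T - t$ genuinely splits as a sum of non-negative contributions across the maximal zigzags. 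Once this routine bookkeeping is established, the case split is decisive.
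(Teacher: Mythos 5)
Your proposal is correct and follows essentially the same route as the paper: one containment via a DTE/DTR chain linking consecutive $1$-zigzags back to a known descendant ($K_5$ in your version, the octahedron in the paper's), and the converse via the zigzag decomposition of Lemma~\ref{lem zigzag shape} together with the observation that any configuration other than a single $1$-zigzag has strictly more vertices than triangles. Your explicit excess count for open and closed chains is exactly the computation the paper packages separately as Lemma~\ref{lem level by zigzag count}, so the extra bookkeeping you anticipate is already the intended justification.
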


\begin{proof}
  Let $G$ be a $K_5$ descendant of level $0$. While it is true that $1$-zigzags of order $< 7$ might have additional triangles, 1-zigzags of order more than $7$ have level 0. Each double triangle reduction of a 1-zigzag of order $\geq 7$ gives the 1-zigzag of size one smaller, until reaching the octahedron which is a $K_5$ descendant itself. By Lemma~\ref{lem zigzag shape} the triangles of $G$ decompose into zigzags.  Multiple 1-zigzags are disconnected, and any other triangle configuration made of zigzags has more vertices than triangles, thus there are no other level $0$ $K_5$ descendants.
\end{proof}

There is an alternative inductive proof of this result in \cite[Proposition 4.25]{LaradjiDTDoK5} which makes use of the manner in which double triangle expansions can change the level.  Section~\ref{sec:results} relies on  similar techniques. 

\begin{lemma}\label{lem level by zigzag count}
  Suppose $G$ is a $K_5$ descendant of level $> 0$. Suppose that $G$ has $k$ maximal zigzag pieces, that there are $\ell$ vertices which are ends shared by two of the zigzag pieces, and that $m$ vertices of $G$ are in no triangles.  Then the level of $G$ is
  \[
  2k - \ell + m.
  \]
\end{lemma}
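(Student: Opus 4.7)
The plan is a direct double counting argument, using the structural Lemma~\ref{lem zigzag shape} as the key input. Write $T(G)$ for the number of triangles in $G$ so that $L(G)=|V(G)|-T(G)$. First I would invoke Lemma~\ref{lem zigzag shape} to partition the triangles of $G$ into maximal zigzag subgraphs. Since $L(G)>0$, none of these pieces can be a $1$-zigzag: a $1$-zigzag has no end vertices so it would be vertex-disjoint from every other zigzag piece, but $G$ is connected, forcing $G$ itself to be that $1$-zigzag and therefore of level $0$ by Proposition~\ref{prop:level0}, a contradiction. Thus every maximal zigzag piece is a $Z_{n_i}^*$ with $n_i\geq 1$, for $i=1,\dots,k$.

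Next I would count. Each $Z_{n_i}^*$ contributes $n_i$ triangles and $n_i+2$ vertices, with exactly two vertices of degree two inside the piece (its ends) and $n_i$ remaining vertices (chord or internal). Summing over pieces gives $T(G)=\sum_i n_i$ and a naive vertex sum of $\sum_i(n_i+2)=T(G)+2k$. The only overcounting, by Lemma~\ref{lem zigzag shape}, comes from the $\ell$ vertices shared as ends by two zigzag pieces; I need to check each such vertex lies in exactly two pieces (not more). This follows from $4$-regularity: an end vertex of $Z_{n_i}^*$ uses $2$ of its $4$ incident edges inside that piece, and if it is also an end of a second piece the remaining $2$ edges are exhausted, so it cannot be an end of a third. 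Hence each of the $\ell$ shared ends is double-counted exactly once, and the total number of vertices lying in some zigzag piece is $T(G)+2k-\ell$.

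Finally, the remaining vertices of $G$ are precisely those in no triangle, of which there are $m$ by definition. Therefore
\[
|V(G)| \;=\; T(G)+2k-\ell + m,
\]
and subtracting $T(G)$ gives $L(G)=2k-\ell+m$, as claimed. The main (mild) obstacle is the bookkeeping point in the previous paragraph: verifying that the $\ell$ shared ends are shared by exactly two pieces, and that no other identifications occur between maximal zigzags. Both are immediate from Lemma~\ref{lem zigzag shape} together with the $4$-regularity of $G$, so the argument is essentially a one-line vertex count once the structural decomposition is in hand.
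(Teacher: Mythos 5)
Your proof is correct and follows essentially the same route as the paper's: decompose the triangles into maximal zigzags via Lemma~\ref{lem zigzag shape}, rule out $1$-zigzag pieces, and count each remaining (non-cyclic) zigzag as contributing two more vertices than triangles, correcting by $\ell$ for the doubly-counted shared ends and adding $m$ for the triangle-free vertices. One small repair: vertex-disjointness from the other zigzag pieces together with connectivity of $G$ does not by itself force $G$ to equal a $1$-zigzag piece (edges could a priori leave it to non-triangle vertices); the right reason, and the paper's, is that a $1$-zigzag is itself $4$-regular, so inside the $4$-regular graph $G$ it must be an entire connected component, whence $G$ would be that $1$-zigzag and have level $0$.
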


\begin{proof}
  By Lemma~\ref{lem zigzag shape} the triangles of $G$ decompose into zigzags.  However, $G$ cannot contain 1-zigzags since, by 4-regularity, each 1-zigzag is a connected component and 1-zigzags are level 0.  Therefore, all of $G$'s zigzags are non-cyclic (though, there may be cyclic chains of two or more  zigzags).

  A non-cyclic zigzag has two more vertices than triangles, so each zigzag contributes 2 to the level, except that this counts the $\ell$ identified vertices twice, and the $m$ vertices in no triangles also contribute to the level.  This gives the formula in the statement.
\end{proof}

\begin{prop}\label{prop:level1}
There are no level $1$ $K_5$ descendants.
\end{prop}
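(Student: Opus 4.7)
The plan is to apply Lemma~\ref{lem level by zigzag count} and obtain a contradiction by a short case analysis on the number of maximal zigzag pieces. Assume for contradiction that $G$ is a $K_5$ descendant of level $1$. Then by Lemma~\ref{lem level by zigzag count} its level equals $2k - \ell + m$, where $k$ counts the maximal zigzag pieces, $\ell$ counts vertices shared as ends by two such pieces, and $m$ counts vertices lying in no triangle. As observed in the proof of that lemma, $G$ has no 1-zigzag subgraph, since a 1-zigzag would by 4-regularity be a full connected component and 1-zigzags are level $0$; hence every maximal zigzag piece is a $Z_n^*$.

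Next, I would use the chain description supplied by Lemma~\ref{lem zigzag shape}: the $Z_n^*$ pieces are organised into some number of open chains and closed chains, with every closed chain containing at least two pieces. A direct bookkeeping of shared ends---an open chain of $j$ pieces contributes $j-1$ shared vertices, a closed chain of $j$ pieces contributes $j$ shared vertices---yields the identity $\ell = k - c_o$, where $c_o$ is the number of open chains. Substituting into the level formula, the level assumption becomes $k + c_o + m = 1$.

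The argument then concludes with a three-way split on $k$. If $k = 0$ there are no triangles, so $m = |V(G)| \geq 5$, which is too large. If $k = 1$ the unique zigzag cannot form a closed chain on its own, because by Lemma~\ref{lem zigzag shape} a closed chain of a single $Z_n^*$ piece is a 1-zigzag, and 1-zigzags have already been excluded; so it must sit in an open chain, forcing $c_o \geq 1$ and hence level at least $2$. If $k \geq 2$ the level $k + c_o + m$ is already at least $2$. Each case contradicts the level being $1$.

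The only real obstacle is the $k=1$, $c_o=0$ configuration, which must be ruled out by invoking the 1-zigzag characterisation in Lemma~\ref{lem zigzag shape}; once the chain-count identity $\ell = k - c_o$ is written down, the rest is forced.
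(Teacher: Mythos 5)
Your proof is correct and follows essentially the same route as the paper: invoke Lemma~\ref{lem level by zigzag count} and rule out each value of $k$ by showing the level is forced to be at least $2$. The only difference is cosmetic --- you sharpen the paper's bound $\ell \leq k$ to the exact identity $\ell = k - c_o$ via chain counting, and you dispose of $k=0$ by counting vertices where the paper instead notes $k \geq 1$ because every descendant contains a double triangle; both refinements lead to the same case analysis and conclusion.
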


\begin{proof}
  Suppose $G$ were a $K_5$ descendant of level $1$.  Use the notation as in Lemma~\ref{lem level by zigzag count}.  $G$ must have at least two triangles since it came from a double triangle expansion, and so $k\geq 1$. (In fact, as we'll show in Section~\ref{sec:results}, $G$ must have at least four triangles.)  If $k=1$ then $\ell=0$ since there are no other zigzags to share ends with and so $L(G) = 2k+m \geq 2 > 1$ which is a contradiction.  If $k>1$ then $\ell \leq k$ as at most every end is shared, so $L(G) = 2k-\ell+m \geq k+m > 1$ which is again a contradiction.  Therefore no $K_5$ descendants of level 1 exist.
\end{proof}

Again an alternate proof of this result using induction and the ways that different double triangle expansions can affect the level is given in Proposition 4.26 of the thesis \cite{LaradjiDTDoK5} of one of us.

For the enumeration of $K_5$ descendants of levels 2 and 3 we end up enumerating certain integer partitions.  These cases are also a good preparation for the level 4 case.

\begin{prop}\label{prop:level2}
  The $K_5$ descendants of level 2 are precisely the completed primitive 2-zigzag graphs of order $\geq 8$.  The generating function for the number of these graphs, counted by their number of vertices is
  \[
  \frac{x^8}{(1-x)(1-x^2)}.
  \]
\end{prop}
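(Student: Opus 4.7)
The plan is to use Lemma~\ref{lem level by zigzag count} to pin down the structure of level-$2$ descendants, then classify the 4-regularizations of the resulting chain skeleton. Solving $2k - \ell + m = 2$ subject to $k \geq 1$, $0 \leq \ell \leq k$, and $m \geq 0$ forces $(k, \ell, m) = (2, 2, 0)$. Indeed, if $k = 1$ then $\ell = m = 0$, but the only way to complete a single $Z_n^*$ to a 4-regular graph on its own vertex set is to add the three wraparound edges, producing the 1-zigzag $\hat{Z}_n$, which has level $0$; and if $k \geq 3$ then $\ell = 2k - 2 + m \geq 2k - 2 > k$, contradicting $\ell \leq k$. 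Thus $G$'s maximal zigzag decomposition consists of two non-cyclic pieces $Z_{k_1}^*$ and $Z_{k_2}^*$ arranged as a closed chain with all four end vertices identified in pairs and no vertex outside a triangle, i.e.\ a 2-zigzag skeleton.

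The two shared ends already have degree $4$, so only the four chord vertices (two per zigzag) each require one additional edge; a 4-regular completion corresponds to a perfect matching on these four vertices. Up to the skeleton's symmetries there are three matchings: (i) match chords within each zigzag; (ii) match chords adjacent to the same shared end across zigzags; (iii) match chords adjacent to opposite shared ends across zigzags. Matching (i) either duplicates an existing edge (when some $k_i = 3$), creates extra triangles (when $k_i = 4$), or, for $k_i \geq 5$, yields a non-trivial 4-edge cut separating the two zigzag interiors, and so violates either the triangle count or completed primitivity. Matching (ii) creates an extra triangle through each shared end, since both paired chords neighbour that end, raising the triangle count by $2$ and forcing level $0$. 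Only matching (iii) survives, and it requires $k_1, k_2 \geq 3$, because in $Z_2^*$ every chord is adjacent to both ends and the cross-edges would again create triangles. For $k_1, k_2 \geq 3$ the two cross-edges have no common neighbour, so the triangle decomposition is preserved and the two zigzag interiors exchange six edges of incidence in total, consistent with completed primitivity. The minimal order is $k_1 + k_2 + 2 = 8$.

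For the converse, every such graph is a $K_5$ descendant: when $k_1 \geq 4$, DTR on an appropriate interior double triangle of $Z_{k_1}^*$ reduces the graph to the matching-(iii) 2-zigzag with parameters $(k_1 - 1, k_2)$; iterating in both zigzags produces the $(3, 3)$ 2-zigzag of order $8$, which a direct DTR calculation shows reduces to $\hat{Z}_5$ (consistent with Figure~\ref{fig:K5familytree}), and then the standard 1-zigzag DTRs take $\hat{Z}_5 \to \hat{Z}_4 \to \hat{Z}_3 \cong K_5$. To count, level-$2$ $K_5$ descendants of order $n$ are in bijection with unordered pairs $\{k_1, k_2\}$ of integers $\geq 3$ with $k_1 + k_2 = n - 2$; setting $k_i = k_i' + 3$ turns this into partitions of $n - 8$ into at most two parts, whose generating function is $\frac{1}{(1-x)(1-x^2)}$, and the $x^8$ shift gives the claimed formula.

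The principal obstacle is verifying completed primitivity for matching (iii): the $6$-edge cut separating the two zigzag interiors is apparent, but one must also rule out any smaller non-trivial edge cut interior to the graph (around chord vertices, shared ends, or other vertex subsets), which requires a careful case analysis especially in the smallest cases where chord-to-end adjacencies are densest.
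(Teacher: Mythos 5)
Your proof follows essentially the same route as the paper's: the level formula forces $(k,\ell,m)=(2,2,0)$, the unique triangle-free $4$-regular completion of the closed two-zigzag chain is identified (your matching (iii) is exactly the paper's Figure~\ref{fig completing the 2-zigzag}), descendancy is verified by double triangle reducing down to a small known descendant, and the count reduces to partitions into two parts each at least $3$. The ``principal obstacle'' you flag at the end is not actually one: once your matching-(iii) graphs are shown to reduce by DTRs to $K_5$, they are $K_5$ descendants, and every $K_5$ descendant is automatically completed primitive because $K_5$ is and double triangle expansion preserves completed primitivity, so no separate edge-cut analysis is required.
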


Expanding the generating function, the counting sequence of level 2 $K_5$ descendants begins $(0, 0, 1, 1, 2, 2, 3, 3, \ldots )$, which agrees with the direct enumeration of small cases.

\begin{proof}
  Let $G$ be a $K_5$ descendant of level 2.  Using the notation as in Lemma~\ref{lem level by zigzag count}, we have $2k-\ell+m = 2$.  By direct counting of small graphs we see that there are no level 2 $K_5$ descendants of order $< 8$.

  If $k=1$ then $\ell=0$ since there are no other zigzags to share ends with and so we must also have $m=0$.  However the only way to add edges to a single zigzag to get a 4-regular simple graph is to build a 1-zigzag, but 1-zigzags of order $\geq 8$ have level $0$, so $k=1$ is impossible.

  Since $k>1$ then $\ell \leq k$ as at most every end is shared so $k+m\leq 2$.  Therefore $k=2$ and $m=0$ which also implies $\ell=2$.  So $G$ must consist of two zigzags joined at both of their ends; that is $G$ is a 2-zigzag.

  There is a unique way to add edges to two zigzags joined at both their ends to obtain a completed primitive 4-regular simple graph with no additional triangles.  This is illustrated on the left hand side of Figure~\ref{fig completing the 2-zigzag}.  Furthermore, all such graphs are $K_5$ descendants, as iteratively double triangle reducing each zigzag we obtain the octahedron, as illustrated on the right hand side of Figure~\ref{fig completing the 2-zigzag}, which is a $K_5$ descendant.

\begin{figure}
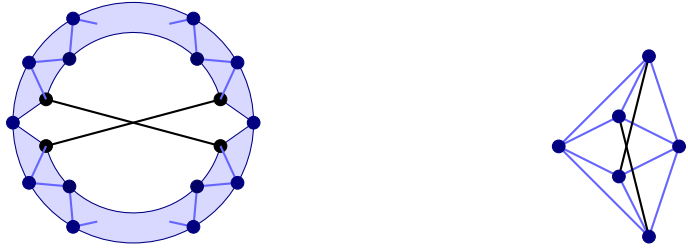

\mbox{}\hfill\leveltwo{.4}\hfill
\leveltworeduction{.4}\hfill\mbox{}
\caption{The unique 2-zigzag template and its double triangle reduction. The shaded blocks represent zigzags Note that depending on parity the top and bottom zigzags on the left may connect with a twist or without.}
\label{fig completing the 2-zigzag}
\end{figure}

  It remains to enumerate 2-zigzags.  If the two zigzags each have at least 3 triangles then the additional edges do not create any new triangles, while if either zigzag has only one or two triangles then additional triangles are created and the result is no longer a 2-zigzag.  Thus we are enumerating the possible pairs of lengths, each $\geq 3$, for the two zigzags.  Since we are enumerating up to isomorphism, this is the same as enumerating integers $z_1$ and $z_2$ subject to the constraint
  \[
  z_1 \geq z_2 \geq 3
  \]
  By standard techniques, see~\cite[Section I.3.1]{FlSe09}, this is the same as enumerating partitions into two parts, each at least 3, or equivalently enumerating partitions with at least three parts all of which are 1s or 2s.  This has generating function
  \[
  \frac{x^6}{(1-x)(1-x^2)}
  \]
  counting by number of triangles.  We are interested in generating functions which mark the number of vertices, hence we multiply by $x^2$ since each of these graphs has two more vertices than triangles. The generating function for $K_5$ descendants of level two, counting by number of vertices is
  \[
  \frac{x^8}{(1-x)(1-x^2)}.
  \]
\end{proof}

An alternate proof of this result using a vector encoding for the ways that zigzags can connect along with induction and the ways that different double triangle expansions can affect the level is given in Proposition 4.27 of the thesis \cite{LaradjiDTDoK5} of one of us.

\begin{prop}\label{prop:level3}
  The generating function for the number $K_5$ descendants of level $3$, counted by their number of vertices, is
  \[
  \frac{x^9(1+x^2)}{(1-x)^3(1+x+x^2)}.
  \]
\end{prop}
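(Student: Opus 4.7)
Following the blueprint of Proposition~\ref{prop:level2}, my plan is to apply Lemma~\ref{lem level by zigzag count} to classify the possible configurations. The equation $2k-\ell+m=3$ together with $0\le \ell \le k$ and $m\ge 0$ admits exactly four possibilities: $(k,\ell,m)\in\{(1,0,1),(2,1,0),(2,2,1),(3,3,0)\}$. I would then show that the first two yield no $K_5$ descendants and enumerate the other two.

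For $(1,0,1)$, a single non-1-zigzag piece $Z_n^\ast$ is augmented by a vertex $v$ belonging to no triangle. The only vertices of deficient degree in $Z_n^\ast$ are its two ends (each short by two edges) and its two chord vertices (each short by one), so $v$ must attach to all four. Since each end is adjacent to its neighbouring chord, this forces a triangle through $v$, contradicting that $v$ lies in no triangle. For $(2,1,0)$, two zigzags share exactly one end vertex; I would enumerate the possible 4-regular completions and show that each one either forces a new triangle outside the prescribed maximal zigzags (via common neighbourhoods of the unshared ends and their chord partners) or fails internal 6-edge-connectivity, ruling out this case as well.

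The main enumerations occur in cases $(3,3,0)$ and $(2,2,1)$. In the $(3,3,0)$ case, three zigzags form a closed chain and the three chord--chord edges needed to attain 4-regularity admit a small number of templates, corresponding to the macros \verb|\levelthreea|, \verb|\levelthreeb|, \verb|\levelthreec|. For each template I would verify completed primitivity and descent from $K_5$ by exhibiting an explicit double triangle reduction sequence terminating at the octahedron (and hence $K_5$), keeping track of dihedral symmetries so that isomorphic graphs are not counted twice. In the $(2,2,1)$ case, two zigzags form a closed chain and the extra vertex $v$ is forced to connect to all four chord vertices (the only deficient-degree vertices); triangle avoidance through $v$ requires both zigzag lengths to be at least four, since within $Z_3^\ast$ the two chord vertices are adjacent.

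The final step is a generating function calculation: for $(3,3,0)$ one enumerates unordered triples $(n_1,n_2,n_3)$ with $n_i\ge 3$ (weighted by the number of valid templates per triple), while for $(2,2,1)$ one enumerates unordered pairs $(n_1,n_2)$ with $n_i\ge 4$. Converting from triangle count to vertex count by multiplying each contribution by $x^3$ (since the level is $3$) and adding the two yields, after common-denominator simplification using $(1-x)(1+x+x^2)=1-x^3$, the expression
\[
\frac{x^9(1+x^2)}{(1-x)^3(1+x+x^2)}.
\]
The main obstacle will be the fine-grained case analysis in $(3,3,0)$: precisely determining which chord-edge patterns give completed primitive $K_5$ descendants, and correctly accounting for the dihedral symmetries of a closed 3-chain so that the number of templates per partition is exactly right. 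The elimination of $(2,1,0)$ is similar in spirit but more mechanical, while the remaining generating function manipulations reduce, as in the level~2 case, to standard partition enumeration.
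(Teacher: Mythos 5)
Your framework is the right one---the same four solutions $(k,\ell,m)$ of $2k-\ell+m=3$ coming from Lemma~\ref{lem level by zigzag count} organize the paper's proof---but you have the two $k=2$ cases exactly backwards, and this changes the answer. The configuration $(k,\ell,m)=(2,2,1)$ (a closed chain of two zigzags plus an apex vertex joined to the four chord vertices) is indeed the unique $4$-regular completion, and your observation that triangle-avoidance at the apex forces both lengths to be at least $4$ is correct; but these graphs are \emph{never} $K_5$ descendants. Double triangle reducing each zigzag down to length two produces the fixed graph of Figure~\ref{fig not k5}, which contains triple triangles and so by Lemma~\ref{lem zigzag shape} cannot be a descendant of order greater than $5$; since reductions of descendants are descendants, the entire family is excluded. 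Local triangle-avoidance is not a sufficient test here---you must check descent, and it fails. Conversely $(k,\ell,m)=(2,1,0)$, which you propose to eliminate, does contribute: joining the two free ends of the open chain by an edge and routing the two remaining deficient edges across to chord vertices of the opposite zigzags gives genuine descendants, which the paper absorbs as the ``length-$0$ zigzag'' entries $z_3=0$ of Table~\ref{tab L=3 small}.

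The effect is quantitative, not merely organizational: your plan would add a spurious term (pairs $n_1\geq n_2\geq 4$, i.e.\ $x^8/((1-x)(1-x^2))$ in the triangle-counting variable) and omit the open-chain term (pairs $z_1\geq z_2\geq 3$ with a length-$0$ connector, i.e.\ $x^6/((1-x)(1-x^2))$), so the final sum would not reduce to $x^9(1+x^2)/((1-x)^3(1+x+x^2))$. A secondary inconsistency: in the $(3,3,0)$ case you correctly list the short-zigzag template among your configurations, but your closing enumeration counts only triples with all $n_i\geq 3$, which drops the $z_3\in\{1,2\}$ contributions of Table~\ref{tab L=3 small} that this template carries. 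The rest of the outline---the two all-long templates, the symmetry bookkeeping via partitions, and the $x^3$ shift from triangles to vertices---matches the paper's argument.
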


Expanding the generating function, the counting sequence of level 3 $K_5$ descendants begins $(0, 0, 1, 2, 4, 7, 10, 14, \ldots )$, which agrees with the direct enumeration of small cases in Table~\ref{tbl:ordertri}.

Note that another way to write this function is 
\[
  \frac{x^9(1+x^2)}{(1-x)^3(1+x+x^2)}=\frac{x^9(1+x^2)}{(1-x)^2(1-x^3)}= \frac{x^9(1+x^2)(1+x)}{(1-x)(1-x^2)(1-x^3)},
\]
the latter two of which transparently have the form of partition generating functions, and hence are suggestive towards our approach.

\begin{proof}
  Let $G$ be a $K_5$ descendant of level 3.  Using the notation as in Lemma~\ref{lem level by zigzag count}, we have $2k-\ell+m = 3$.

  If $k=1$ then as in the previous proofs, $\ell=0$ and so $m=1$.  However, there are no ways to add edges to a single zigzag and an additional vertex to get a 4-regular simple graph without also introducing additional triangles.  This is because the additional vertex must connect to all four of the vertices of the zigzag which are not already of degree 4, but this immediately creates two new triangles at each end.  Therefore $k=1$ is impossible.

  Since $k>1$, then as in the previous proofs $\ell \leq k$, so $k+m\leq 3$.  There are three possibilities: $(k=2, m=1, \ell=2)$, $(k=2, m=0, \ell=1)$, $(k=3, m=0, \ell=3)$.  For each of these three possibilities we exhaustively find all possible ways to add edges to the zigzags and additional vertex, if present, to get a 4-regular simple graph.  For each way to obtain a 4-regular graph, we can replace each zigzag with a zigzag on one or two triangles and this is a double triangle reduction of the graph we started with.  However, each of these reduced graphs is a single finite graph, rather than a family given by different sizes of the zigzags, and so each reduced graph can be checked to determine if it is a $K_5$ descendant by reducing any remaining double triangles.  Both of these steps are finite and automatable, and will remain so for any fixed value of the level.

  We will now proceed to follow this plan in the present situation.

  \textbf{Case 1: $(k=2, m=1, \ell=2)$}.  The two zigzags must be joined at both ends to get $\ell=2$, that is they form a closed zigzag chain.  This subgraph has four vertices of degree three, these are the four chord vertices of the zigzags, unless one of the zigzags contains only one triangle in which case there is instead a single 2-valent vertex in this zigzag that is not at either end.  To give a 4-regular graph, the additional vertex from $m=1$ must join to each of these lower degree vertices, bringing each of their degrees up to 4.  This is the only possibility for $G$ with $(k=2, m=1, \ell=2)$.  However, this graph is not a $K_5$ descendant, as if each zigzag has at least two triangles then reducing each zigzag to two triangles gives the graph in Figure~\ref{fig not k5} which has triple triangles, while if either zigzag has only one triangle then we have a double edge.  Therefore this case does not occur.

\begin{figure}
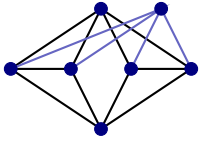

\nondescendant{.4}

\end{tikzpicture}
    \caption{The result of reducing a graph with $(k=2, m=1, \ell=2)$ and with each zigzag of length at least 2.  This is not a $K_5$ descendant.}\label{fig not k5}
  \end{figure}

  \textbf{Case 2: $(k=3, m=0, \ell=3)$}.  Here the three zigzag pieces must form a closed zigzag chain.  Consider first the situation where each zigzag has at least three triangles in it.  The zigzag chain has six 3-valent vertices (the six chord vertices of the zigzags) and all other vertices are already 4-valent.  We need to add three edges, matching the six chord vertices by pairs, to obtain a 4-regular graph.  If we match the two chord vertices in the same zigzag then we get a graph with a 4-edge cut and so a graph which is not completed primitive, hence not a $K_5$ descendant.  If we match two chord vertices which are both adjacent to the common end vertex shared by two zigzags then we create a new triangle.  Up to rotational symmetry this leaves only two possibilities:
  
  \centerline{\levelthreea{.3} \quad \levelthreeb{.3}. }
  
  In these diagrams the shaded sections represent zigzags.  The indicated vertices are the end and chord vertices of each zigzag, and the chord vertices are joined by additional edges as indicated.  By reducing each zigzag to a triangle we obtain the octahedron for both possibilities:
\begin{center}
 \begin{tikzpicture}[rotate=60,scale=.6]
\foreach \x in {30,150,270} {
  \fill[white,draw=black] (\x-60:1.5) arc (\x-60:\x+60:1.5) -- (\x:.5) -- cycle;
}
  \draw[thick](30:.5) -- (150:.5);
  \draw[thick](150:.5) -- (270:.5);
  \draw[thick](270:.5) -- (30:.5);
  \filldraw(30:0.5) circle (3pt);
  \filldraw(150:0.5) circle (3pt);
  \filldraw(270:0.5) circle (3pt);
   \filldraw(90:1.5) circle (3pt);
  \filldraw(210:1.5) circle (3pt);
  \filldraw(330:1.5) circle (3pt);
\end{tikzpicture}
\end{center}
This is a $K_5$ descendant.

  If any zigzag contains only one or two triangles then the situation is slightly different.  Call such zigzags \emph{short} zigzags.  If a zigzag contains only one triangle then, rather than two 3-valent vertices, there is one 2-valent vertex distinct from the ends, while if a zigzag contains exactly two triangles, then there are still two 3-valent vertices, however there is an automorphism of the zigzag which swaps them, so for the purposes of enumerating ways to add edges to obtain a 4-regular graph these two vertices are indistinguishable.  Short zigzags will be indicated in the diagrams with shorter shaded segments with only one marked inner vertex.  This one vertex needs to be connected to two other vertices of the diagram.  This one vertex either represents the one 2-valent vertex not at the ends in the zigzag with one triangle or represents together the two indistinguishable 3-valent vertices of the two triangle zigzag.  As in the case with all zigzags of length at least 3, a vertex cannot be connected within the same zigzag nor to the nearest vertex in the nearest zigzag.  As a consequence two short zigzags are not possible in the same graph in the $(k=3, m=0, \ell=3)$ case, and there is only one possibility with one short zigzag:
  \centerline{\levelthreec{.3}}. This one possibility does give $K_5$ descendants as reducing all zigzags to single triangles gives the same graph (the octahedron) as obtained with no short zigzags.

  For each possibility we now want to count how many such graphs there are.  For the two possibilities with no short zigzags, let the three lengths be $z_1\geq z_2\geq z_3\geq 3$, where length is the number of triangles.  Suppose first the $z_i$ are distinct.  Then in the less symmetric possibility (on the left in Table~\ref{tab L=3 not small}) there are three ways to assign the $z_i$s to the lengths of the zigzags based on whether the largest, middle, or smallest, is assigned to the distinguishable zigzag.  In the more symmetric possibility (on the right in Table~\ref{tab L=3 not small}) there is only one way to assign the $z_i$.  Arguing similarly when some $z_i$s are equal we obtain Table~\ref{tab L=3 not small} which collects how many ways there are to assign the $z_i$s, subject to the indicated constraints, for each possibility.  The situation with a short zigzag is similar, but we have $z_1\geq z_2 \geq 3$ as the lengths of the two non-short zigzags, and $z_3\in\{1,2\}$ for the short zigzag.  Table~\ref{tab L=3 small} collects how many ways there are to assign the lengths in each case.  The table indicates $2\geq z_3\geq 0$; so far we have $2\geq z_3\geq 1$ and the case $z_3=0$ is discussed below.
  
  \textbf{Case 3: $(k=2, m=0, \ell=1)$}.
  Here we have two zigzags joined at one end with their other ends free, that is an open zigzag chain of two zigzags.  This leaves two 2-valent vertices at one end of each zigzag, which we must get up to degree 4 by adding edges.  These cannot be joined within their zigzag for the same reason as above.  To avoid creating new triangles, a 2-valent vertex cannot be joined to two adjacent vertices on the other zigzag.  This implies that neither zigzag can be short.  Also, a 2-valent vertex cannot join to the two non-end 3-valent vertices of another zigzag, as if so then double triangle reducing the other zigzag down to a single triangle would give a double edge.  This leaves 
  \[
  \begin{tikzpicture}[scale=.3]
\coordinate  (A0) at (70:\rout);
\coordinate  (B0) at (110:\rout);

\arcarrow{110}{160}{blue!30,draw = blue!50!black}{1} 
\arcarrow{270}{160}{blue!30,draw = blue!50!black}{2} 
 
\foreach \x in  {270} {
  \filldraw[blue!50!black](\x:\rout) circle (6pt);
}
  \draw[thick](A0) -- (B0);
  \filldraw(A0) circle (6pt);
  \filldraw(B0) circle (6pt);
\end{tikzpicture}\]
  
  as the only possibility.
This case has an edge between the two free ends of the zigzags, leaving one additional non-zigzag edge required for each of these vertices.  Further, it suffices to know where the other two ends of these edges are, without keeping track of which came from which zigzag, because the fact that edges cannot stay within a zigzag disambiguates.  Thus this case carries the same information as the case in Table~\ref{tab L=3 small}, but now what is behaving as the short zigzag has 0 triangles.  This gives the case $z_3=0$ in Table~\ref{tab L=3 small}, and joining the ends of zigzags with an edge is what we mean by a zigzag of length 0 in our diagrams.

  Now we have what we need to find the generating function.  Considering Table~\ref{tab L=3 not small} and Table~\ref{tab L=3 small}, the constraints on the $z_i$ determine generating functions for partitions by standard techniques, as in the proof of the previous proposition.  These are given in the last column of the tables.  Each generating function is weighted by its row sum as the row sum indicates how many non-isomorphic ways these lengths can be assigned to a zigzag configuration giving a $K_5$ descendant.  We obtain

  \begin{table}
  \begin{tabular}{lcccl}
 Partition& \levelthreea{.1} &\levelthreeb{.1} & Row sum & OGF \\
  $z_1>z_2>z_3\geq 3$ & 3& 1 &4&$x^{12}((1-x^3)(1-x^2)(1-x))^{-1}$\\
  $z_1=z_2>z_3\geq 3$ & 2& 1 &3&$x^{11}((1-x^3)(1-x^2))^{-1}$ \\
  $z_1>z_2=z_3\geq 3$ & 2& 1 &3&$x^{10}((1-x^3)(1-x))^{-1}$\\
  $z_1=z_2=z_3\geq 3$ & 1& 1 &2&$x^9(1-x^3)^{-1}$\\
  \end{tabular}
  \caption{Conditions on the numbers of triangles in each zigzag, multiplicities, and generating functions for the level 3 configurations with no small zigzags.}\label{tab L=3 not small}
  \end{table}
\begin{table}
  \begin{tabular}{lcl}
 Partition & \levelthreec{.1} & OGF \\
  $z_1>z_2>2 \geq z_3\geq 0$ &  1 &$x^7(1+x+x^2)((1-x^2)(1-x))^{-1}$\\
  $z_1=z_2>2 \geq z_3\geq 0$ & 1 &$x^6(1+x+x^2)((1-x^2))^{-1}$ \\
  \end{tabular}
  \caption{Conditions on the numbers of triangles in each zigzag, multiplicities, and generating functions for the level 3 configurations with one small zigzag.}\label{tab L=3 small}
  \end{table}

\begin{align*}
  & 4 \frac{x^6}{(1-x^3)(1-x^2)(1-x)} + 3 \frac{x^5}{(1-x^3)(1-x^2)} + 3 \frac{x^4}{(1-x^3)(1-x)} + 2 \frac{x^3}{1-x^3} \\
  & \qquad + \frac{x^6}{(1-x^3)(1-x^2)(1-x)} + \frac{x^5}{(1-x^3)(1-x^2)}\\
  & = \frac{x^6(1+x^2)}{(1-x)^3(1+x+x^2)}
\end{align*}
for the generating function of level 3 $K_5$ descendants counted by number of triangles, or equivalently
\[
  \frac{x^9(1+x^2)}{(1-x)^3(1+x+x^2)}
\]
 for the generating function of level 3 $K_5$ descendants counted by number of vertices. 
\end{proof} 

Again, an alternative proof is found in~\cite[Proposition 4.30]{LaradjiDTDoK5}. The configuration diagrams give an alternate way of capturing the information in the vector encodings in the proof of \cite{LaradjiDTDoK5}.  Vector encodings will be discussed in Section~\ref{sec:results}.

\subsection{Level 4}
Using the initial terms of the counting sequence, Laradji guessed a form for the generating function of $K_5$ descendants of level 4~\cite[Conjecture 4.31]{LaradjiDTDoK5}. We now prove this conjecture by a combinatorial argument, proceeding in a manner similar to the level 2 and 3 cases. 

\begin{prop}\label{prop:level4}
The generating function for the number of level $4$ $K_5$ descendants, counted by their number of vertices, is
  \[
  \frac{x^{20} - x^{19} +3x^{18} - 3x^{17} + 4x^{16} + 4x^{14} + 3x^{13} + 6x^{12} + 3x^{11} + 4x^{10} + x^9}{(1-x)^4(1+x)^2(1+x^2)}
  \]
\end{prop}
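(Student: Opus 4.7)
The plan is to follow exactly the strategy established in the proofs of Propositions~\ref{prop:level2} and~\ref{prop:level3}: use Lemma~\ref{lem level by zigzag count} to reduce to a finite list of $(k,\ell,m)$ triples, exhaustively enumerate zigzag configurations realizing each triple, check $K_5$-descendancy by contracting zigzags to single triangles, and assemble the generating function from partition generating functions weighted by the multiplicity of each valid configuration.

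First I would solve $2k-\ell+m=4$ under the constraints $\ell\le k$ and $m\ge 0$. The feasible tuples are $(1,0,2)$, $(2,0,0)$, $(2,1,1)$, $(2,2,2)$, $(3,2,0)$, $(3,3,1)$, and $(4,4,0)$; all $k\ge 5$ are ruled out because $\ell \ge 2k-4 > k$. The case $(1,0,2)$ can be excluded by the same argument used in Proposition~\ref{prop:level3}: a single zigzag plus two extra vertices cannot be completed to a 4-regular simple graph without introducing additional triangles incident at the zigzag ends. For each remaining tuple I would enumerate (up to rotation/reflection symmetry of the underlying closed or open chain) all ways of matching the 3-valent chord vertices, the 2-valent free ends of open chains, and any extra $m$ vertices, with the side constraints that no new edge lies inside a single zigzag, no two chord vertices at a shared zigzag end are matched, and no two chord vertices on two different zigzags become linked by two parallel edges after reduction; these are precisely the rules that emerged in the level~3 analysis. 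The tikz configurations \texttt{\textbackslash fourbigzag}, \texttt{\textbackslash threebigzag}, \texttt{\textbackslash twobigzagosm}, \texttt{\textbackslash twolongtwoshorta/b/c}, \texttt{\textbackslash onebigzigzaga/b/c/d}, and \texttt{\textbackslash nobigzigzag} already defined in the preamble should be exactly the surviving diagrams (long zigzags in blue, short zigzags in orange), which strongly suggests the final list of cases.

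Once the diagrams are fixed, for each I would contract every long zigzag to one triangle and every short zigzag to its one or two triangles, verify that the resulting small graph reduces by further DTRs to the octahedron (which is a $K_5$ descendant), and discard those that produce a $K_4$, a triple triangle, a nontrivial 4-edge cut, or a $3$-vertex cut (as happened for the $(k=2,m=1,\ell=2)$ case of level 3). Following Proposition~\ref{prop:level3}, I would treat an edge directly joining two free zigzag-ends as a length-$0$ ``zigzag,'' and pool the cases $z_i\in\{0,1,2\}$ as ``short'' and $z_i\ge 3$ as ``long,'' noting that a short zigzag has a local automorphism swapping its two end-adjacent inner positions, which cuts the multiplicity of configurations involving it. For each diagram I would then tabulate (as in Tables~\ref{tab L=3 not small} and \ref{tab L=3 small}) the row-sum multiplicities for each ordering of the lengths $z_1\ge z_2\ge\cdots$ and write down the partition generating function, all counted by total number of triangles.

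Summing all the rational contributions and multiplying by $x^4$ to convert from triangle-count to vertex-count should yield the claimed expression; I would verify the algebraic simplification by noting that the denominator $(1-x)^4(1+x)^2(1+x^2) = (1-x)^2(1-x^2)(1-x^4)$ is exactly the kind of denominator produced by partitions into parts of sizes 1, 2 and 4 together with two unrestricted distinguished parts, which is the structure one expects from the biggest contribution (the $(4,4,0)$ configuration with four possibly-equal zigzag lengths under a dihedral symmetry) combined with the simpler $(3,\cdot,\cdot)$ and $(2,\cdot,\cdot)$ contributions. The main obstacle will be the bookkeeping: the list of diagrams is noticeably longer than at level 3 (the preamble suggests roughly a dozen), and the automorphism groups of the $(k=4,\ell=4)$ closed 4-chain and the $(k=3,\ell=3)$ closed 3-chain with a central extra vertex produce several subcases when the zigzag lengths coincide, so a careful table analogous to Tables~\ref{tab L=3 not small} and~\ref{tab L=3 small} must be produced for each $(k,\ell,m)$ to avoid over- or under-counting symmetric tuples; a sanity check against the first several entries of Table~\ref{tbl:ordertri} (the level-4 diagonal) at the end will confirm the result.
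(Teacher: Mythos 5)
Your proposal follows essentially the same approach as the paper: the same reduction via Lemma~\ref{lem level by zigzag count} to the same seven $(k,\ell,m)$ tuples (with $k=1$ and $(k,\ell,m)=(2,2,2)$, $(3,3,1)$ excluded for the same reasons), the same rules forbidding added edges within a zigzag or between chord vertices at a shared end, the same device of treating an edge between two free zigzag ends as a length-$0$ zigzag, and the same final assembly from multiplicity-weighted partition generating functions followed by multiplication by $x^4$. The only substantive difference is that you stop short of actually producing the configuration diagrams and their row-sum multiplicity tables (the analogues of Tables~\ref{tab L=4 four not small}--\ref{tab L=4 all small}), which is precisely where the specific numerator polynomial is derived, so the plan is sound but the decisive bookkeeping remains to be executed.
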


Expanding the generating function, the counting sequence of level 4 $K_5$ descendants begins $(0, 1, 6, 15, 34, 61, 106, 162, 246, \ldots)$, which agrees with the direct enumeration of small cases.

\begin{proof}
  Let $G$ be a $K_5$ descendant of level 4.  Using the notation as in Lemma~\ref{lem level by zigzag count} we have $2k-\ell+m = 4$.  As in the level 3 proof let a \emph{short} zigzag be one of size $<3$.

    If $k=1$ then as in the previous proofs, $\ell=0$ and so $m=2$.  However, there are no ways to add edges to a single zigzag and two additional vertices to get a 4 regular simple graph without also introducing additional triangles, so $k=1$ is impossible.

    Since $k>1$, then as in the previous proofs $\ell \leq k$, so $k+m\leq 4$.  There are six possibilities,
    \begin{enumerate}
    \item $(k=4, m=0, \ell=4)$,
    \item $(k=3, m=0, \ell=2)$,
    \item $(k=3, m=1, \ell=3)$,
    \item $(k=2, m=0, \ell=0)$,
    \item $(k=2, m=1, \ell=1)$, and
    \item $(k=2, m=2, \ell=2)$.
    \end{enumerate}
    As in the level 3 case, for each of these three possibilities we exhaustively find all possible ways to add edges to get a 4-regular simple graph and for each of these ways replace each zigzag with a zigzag on one or two triangles giving finite graphs for which we can check directly whether or not they are $K_5$ descendants by reducing any remaining double triangles.

Although the case analysis is tedious, it is a finite, and 
 automatable process. We are able to reduce the number of cases as we did in the level 3 case,  with the help of the following two observations:  No added edge can have both ends in the same zigzag (to avoid contradicting completed primitivity); and no added edge can connect the vertices adjacent to the shared end in two zigzags sharing an end (to avoid creating new triangles).

This removes two cases immediately. The last possibility, $(k=2, m=2, \ell=2)$ is impossible as there are not enough vertices to connect to the two vertices that are in no triangles.  For $(k=3, m=1, \ell=3)$ two edges out of the vertex in no triangles must go to the same zigzag and so this is not a $K_5$ descendant as double triangle reducing this zigzag would give a double edge.  Four possibilities remain.
    
For the first remaining possibility, $(k=4, m=0, \ell=4)$, either there are two closed chains of 2 zigzags each or a closed chain of four zigzags.  The first possibility is not completed primitive as four edges go one from each closed chain, and so we must have a closed chain of four zigzags.  (In fact we'll see in Corollary~\ref{lem K5 desc is pseudo desc} that a $K_5$ descendant can have a closed chain only if it has exactly one and no vertices in no triangles and hence is an $n$-zigzag.)
    
    If there are no short zigzags we obtain the configurations of zigzags in Table~\ref{tab L=4 four not small}.  As in the level 3 case, we need multiplicities to account for how many ways the lengths of the zigzags $z_1\geq z_2\geq z_3\geq z_4$ can be assigned in each configuration, where length is the number of triangles.  Again we do this by separating based on which of the $z_i$ are equal.  In this case we further break it up by the cyclic order of the distinct lengths.  This is indicated in the column of the table labelled ``Pattern''.  The results of these computations are in Table~\ref{tab L=4 four not small}.

If any of the zigzags are short, some of the templates coincide.  
Remaining in the first possibility for the moment, if there is one short zigzag then we obtain the configurations of zigzags in Table~\ref{tab L=4 one small}.  Note that in each of these configurations, at least one of the additional edges out of the short zigzag goes to an adjacent zigzag.  This implies that, as in the proof of case 3 at level 3, if we have an edge joining the ends two zigzags then this carries the same information as a short zigzag with the fact that edges cannot stay within a zigzag disambiguating which additional edge goes to which end of the edge joining the ends of the zigzags.  In this way we can again think of an edge joining the ends of two zigzags as a size 0 zigzag joining the two zigzags.  Therefore Table~\ref{tab L=4 one small} takes care of both $(k=4, m=0, \ell=4)$ with one short zigzag and $(k=3, m=0, \ell=2)$ when there is an edge joining the two free zigzag ends and none of the three zigzags are short.

    The case of $(k=4, m=0, \ell=4)$ with two short zigzags gives the configurations in Table~\ref{tab L=4 two small} and again at least one additional edge out of each short zigzag goes to an adjacent zigzag so we can also capture size 0 zigzags in this case.  This additionally takes care of $(k=3, m=0, \ell=2)$ when there is an edge joining the two free zigzag ends and one of the three zigzags is short as well as $(k=2, m=0, \ell=0)$ when there are two edges pairing the ends of each zigzag to the other, and $(k=2, m=1, \ell=1)$ when a path with two edges goes from one free zigzag end, through the non-triangle vertex, to the other free zigzag end.

Furthermore, more than two short zigzags in $(k=4, m=0, \ell=4)$ or more than one short zigzag in $(k=3, m=0, \ell=2)$ when there is an edge joining the two free zigzag ends, is impossible.  This is because the only ways to add the additional edges so as not to make additional triangles results in two edges from the inside of one zigzag to the inside of another and this cannot be a $K_5$ descendant as double edges result upon reducing the zigzags.

    Additionally, the only ways to add edges in $(k=2, m=0, \ell=0)$ (without creating additional triangles or connecting within a zigzag) pair the ends of the zigzags with an edge, so this case is now fully dealt with.  Similarly, the only way to add edges in $(k=2, m=1, \ell=1)$ (without creating additional triangles or connecting within a zigzag) leads to a path with two edges from one free zigzag end, through the non-triangle edge, to the other free zigzag end, so this case is fully dealt with as well.

    It remains to consider $(k=3, m=0, \ell=2)$ when there is no edge joining the two free zigzag ends.  With no short zigzags this is given in Table~\ref{tab L=4 three big}.  With short zigzags there is one extra consideration. A zigzag with one triangle is just a triangle.  If it joins to only one other zigzag then the remaining two corners are indistinguishable, so the free end and the inner 2-valent vertex are the same.  With this in mind, all the cases with one of the end zigzags having length 1 have already been counted among the cases with size 0 parts, as one of the two 2-valent vertices must connect to the other free zigzag end.  Therefore, when short zigzags appear with a free end in Tables \ref{tab L=4 two big one small part 2}, \ref{tab L=4 one big two small}, and \ref{tab L=4 all small} they must have size 2.  Additionally, size 0 zigzags cannot appear in any of these cases as there are simply no valid ways to add edges for such configurations when the size 0 zigzag is at one end, while when the size 0 zigzag is between the larger two zigzags then swapping the direction of one of the larger zigzags we can reinterpret this as being one of the case $(k=2, m=0, \ell=0)$ configurations with the ends of each zigzag paired to the other.

Finally, using the constraints on the lengths of the zigzags we calculate the generating functions for each row by standard techniques, as in the level 3 case, and sum these weighted by the row sums of the weights in the tables.  Doing so gives
\[
  \frac{x^{16} - x^{15} +3x^{14} - 3x^{13} + 4x^{12} + 4x^{10} + 3x^{9} + 6x^{8} + 3x^{7} + 4x^{6} + x^5}{(1-x)^4(1+x)^2(1+x^2)}
\]
counted by number of triangles.  Multiplying by $x^4$ gives the generating function counted by number of vertices, which is the desired result.

\subsection{Asymptotic growth of the number of graphs}
The asymptotic number of graphs in each class is straightforward to compute since the generating functions are rational. If $g_L(n)$ is the number of $K_5$ descendants on $n$ vertices and level $L$, then $g_2(n)\sim n/2$
$g_3(n)\sim n^2/3$ and $g_4(n)\sim \frac{25}{48}n^3$ following classic techniques of coefficient analysis (see~\cite[Section IV.5]{FlSe09}). We conjecture that $g_k(n)\sim C_k n^{k-1}$ for rational constants $C_k$. This is not obvious from the initial definition, but the subexponential growth follows from the main template with $k$ zigzags. It is more challenging to determine if the limit of the $C_k$ converges as the $k$ tend to infinity, and if so, to what.  


\begin{table}
\caption{Four large zigzags}
\begin{tabular}{llccccccccc}
Partition & Pattern 
&\fourbigzag{A1}{B3}{B1}{A0}{A2}{A3}{B2}{B0}{.15}
&\fourbigzag{A0}{A2}{B0}{B1}{A1}{A3}{B2}{B3}{.15}
&\fourbigzag{A0}{A3}{B3}{A1}{B0}{A2}{B1}{B2}{.15}
&\fourbigzag{A0}{B1}{B0}{B2}{A1}{A3}{A2}{B3}{.15}
&\fourbigzag{A1}{A2}{B1}{B0}{B2}{B3}{A3}{A0}{.15}
&\fourbigzag{A0}{B1}{B0}{A3}{A1}{A2}{B2}{B3}{.15}
&\fourbigzag{A0}{B1}{B0}{A3}{A1}{B2}{A2}{B3}{.15}\\[2mm]
$z_1>z_2>z_3>z_4$ &1234&8 &4&4&4&2&2&1\\
&1243&8 &4&4&4&2&2&1\\
&1324&8 &4&4&4&2&2&1\\
\hline
$z_1=z_2>z_3>z_4$ & 1134 & 8 & 4&4&4&2&2&1\\
& 1314 & 4&2&2&3&2&1&1\\
$z_1>z_2=z_3>z_4$ & 1224 & 8&4&4&4 &2&2&1\\
&1242 & 4&2&2&3&2&1&1\\
$z_1>z_2>z_3=z_4$& 1233  & 8&4&4&4 &2&2&1\\
&1242 & 4&2&2&3&2&1&1\\\hline
$z_1=z_2>z_3=z_4$ & 1133 &4&2&3&2&1&2&1\\
& 1313 & 2&2&1&2&2&1&1\\\hline
$z_1=z_2=z_3>z_4$ & 1114 & 4& 2&2&3&2&1&1\\
$z_1>z_2=z_3=z_4$ & 1444 &  4& 2&2&3&2&1&1\\ \hline
$z_1=z_2=z_3=z_4$ & 1111 & 1&1&1&1&1&1&1\\ \hline
\end{tabular}
\smallskip

\label{tab L=4 four not small}
\end{table}

\end{proof}

\begin{table}
\caption{Three large zigzags, one short (potentially degenerate) zigzag}
\label{tab L=4 one small}
\begin{tabular}{lcccccc}
Partition&Total
&\threebigzagosm{A0}{B1}{A0}{B2}{B3}{A1}{A2}{A3}{.15}
&\threebigzagosm{A0}{B1}{A0}{A2}{B3}{B2}{A1}{A3}{.15}
&\threebigzagosm{A0}{B1}{A0}{B2}{A2}{B3}{A1}{A3}{.15}
&\threebigzagosm{A0}{A3}{A0}{B1}{A1}{A2}{B2}{B3}{.15}
&\threebigzagosm{A0}{A3}{A0}{B1}{A1}{B2}{A2}{B3}{.15}
\\[2mm]
$z_1>z_2>z_3>2\geq z_4\geq 0$ &24  &6&6&6&3&3\\
$z_1=z_2>z_3>2\geq z_4\geq 0$ &13  &3&3&3&2&2\\
$z_1>z_2=z_3>2\geq z_4\geq 0$ &13  &3&3&3&2&2\\
$z_1=z_2=z_3>2\geq z_4\geq 0$ &5   &1&1&1&1&1\\
\end{tabular}
\smallskip

\end{table}

\begin{table}\caption{Two large zigzags, two short (potentially degenerate) zigzags.}\label{tab L=4 two small}
\begin{tabular}{lllccc}
Partition&Pattern &Total
&\twolongtwoshortc{.15}
&\twolongtwoshortb{.15}
&\twolongtwoshorta{.15}
\\[2mm]
$z_1 >z_2  >2\geq z_3 > z_4 \geq 0$ 
&1234 &1  &1&0&0\\ 
&1243 &1  &1&0&0\\
&1324 &3  &0&1&2\\
$z_1 =z_2  >2\geq z_3 > z_4 \geq 0$    
&1134 &1  &1&0&0\\ 
&1314 &2  &0&1&1\\
$z_1 > z_2 >2\geq z_3 = z_4 \geq 0$ 
&1233 &1  &1&0&0\\ 
&1323 &2  &0&1&1\\
$z_1 = z_2 >2\geq z_3 = z_4 \geq 0$
&1133 &1  &1&0&0\\ 
&1313 &2  &0&1&1\\
\end{tabular}
\smallskip

\end{table}

\begin{table}
\caption{Three large zigzags, no short zigzags}\label{tab L=4 three big}
\begin{tabular}{lccccccc}
Partition&Total
&\threebigzag{B1}{B2}{B0}{B3}{B0}{A2}{A1}{A3}{.15}
&\threebigzag{B1}{A2}{B0}{B3}{B0}{B2}{A1}{A3}{.15}
&\threebigzag{B1}{B2}{B0}{A3}{B0}{A2}{A1}{B3}{.15}
&\threebigzag{A2}{B1}{B0}{B2}{B0}{A3}{A1}{B3}{.15}
&\threebigzag{A1}{B2}{B0}{B3}{B0}{A2}{B1}{A3}{.15}
&\threebigzag{A1}{A2}{B0}{B3}{B0}{B2}{B1}{A3}{.15}
\\[2mm]
$z_1>z_2>z_3>2$ &24  &6&6&3&3&3&3\\
$z_1=z_2>z_3>2$ &14  &3&3&2&2&2&2\\
$z_1>z_2=z_3>2$ &14  &3&3&2&2&2&2\\
$z_1=z_2=z_3>2$ &6   &1&1&1&1&1&1\\
\end{tabular}
\smallskip

\end{table}

\begin{table}
\caption{Two large zigzags, one short zigzag (part 1)}\label{tab L=4 two big one small part 1}
\begin{tabular}{lcccccccc}
Partition&Total
&\twobigzagosm{B1}{A2}{B0}{B3}{B0}{A2}{A1}{A3}{.15}
&\twobigzagosm{B1}{A2}{B0}{A2}{B0}{A3}{A1}{B3}{.15}
&\twobigzagosm{A1}{A2}{B0}{B3}{B0}{A2}{B1}{A3}{.15}
\\[2mm]
$z_1>z_2>2\geq z_3\geq 1$ & 4 &2 &1 &1 & & \\
$z_1=z_2>2\geq z_3\geq 1$ & 3 &1 &1 &1 & & \\
\end{tabular}
\smallskip

\end{table}

\begin{table}
\caption{Two large zigzags, one short zigzag (part 2)}\label{tab L=4 two big one small part 2}
\begin{tabular}{lcccccccc}
Partition&Total
&\twobigzagosmb{A1}{B2}{B0}{A2}{B0}{B3}{A1}{A3}{.15}
&\twobigzagosmb{A1}{A2}{B0}{B2}{B0}{A3}{A1}{B3}{.15}
&\twobigzagosmb{A1}{A2}{B0}{B2}{B0}{B3}{A1}{A3}{.15}
&\twobigzagosmb{A1}{B2}{B0}{A2}{B0}{A3}{A1}{B3}{.15}
\\[2mm]
$z_1>z_2>2\geq z_3=2$ & 8 &2 &2 &2 &2 \\
$z_1=z_2>2\geq z_3=2$ & 4 &1 &1 &1 &1  \\
\end{tabular}
\smallskip

\end{table}
\begin{table}
\caption{One large zigzag, two short zigzags}\label{tab L=4 one big two small}
\begin{tabular}{llcccc}
Partition&Total
&\onebigzigzaga{.15}
&\onebigzigzagb{.15}
&\onebigzigzagc{.15}
&\onebigzigzagd{.15}
\\[2mm]
$z_1>2, z_2=2, z_3=1$ &2  &1&1&0&0\\
$z_1>2, z_2=z_3=2$    &4  &1&1&1&1\\
\end{tabular}
\smallskip

\end{table}

\begin{table}

\caption{Zero large zigzags}\label{tab L=4 all small}
\begin{tabular}{llc}
Partition&Total
&\nobigzigzag{.15}\\[2mm]
$z_1=z_2=2\geq z_3\geq 1$ &1  &1\\
\end{tabular}
\smallskip
\end{table}

\section{Minimum number of triangles in $K_5$ descendants}
\label{sec:results}
In this section we show that the every $K_5$ descendant has at least $4$ triangles.  Let $\mbox{tri}(G)$ denote the number of triangles in the graph $G$ (e.g. $\mbox{tri}(K_5)=10$).

Double triangle expansion always produces a double triangle, and thus, trivially, any $K_5$ descendant has at least two triangles. Interestingly, it can be shown that $K_5$ descendants have at least $4$ triangles (Theorem~\ref{thm:tri4}).  This was first observed in data computed by one of us \cite{LaradjiDTDoK5, code}, and this data furthermore shows that the four triangles are not necessarily two double triangles, but may appear as one double triangle and two isolated triangles. To prove that the minimum number of triangles is 4, we will look at the effect of a double triangle reduction on the possible lengths of zigzags.

\subsection{Chain vectors}
In Section~\ref{sec:enumerative} it was important to look at how $K_5$ descendants decomposed in terms of zigzags and \emph{non-triangle vertices}, which are vertices not in any triangles.  We know by Lemma~\ref{lem zigzag shape} that all $K_5$ descendants do decompose in this way. This opens up the possibility of vector encodings for $K_5$ descendants. Although one can define a set of vectors from which a $K_5$ descendant can be completely reconstructed~\cite{LaradjiDTDoK5}, for our purposes it suffices to only consider the lengths of zigzags and breaks between chains. This information is encoded in the chain vector, which can be defined as follows. 

The \emph{length} of a zigzag is the number of its triangles. The \emph{chain vector} of a $(z_1, \cdots, z_n)$-chain $C$, where the $z_i$s are the lengths of the maximal zigzags of $C$, is $\CV(C):=(z_1, \cdots, z_n)$ if the chain is closed, and $\CV(C):=(z_1, \cdots, z_n, 0)$ if it is open. If $G$ is an $n$-zigzag, its \emph{chain vector} is that of its (only) closed chain. If $G$ consists only of open chains $C_1,\cdots,C_r$ and non-triangle vertices, its \emph{chain vector} is the concatenation of the chain vectors of its open chains $\CV(G):=(\CV(C_1)|\cdots|\CV(C_r))$ where $|$ indicates concatentation. 
By Corollary~\ref{lem K5 desc is pseudo desc}, every $K_5$ descendant of order greater than $6$ has a chain vector. Conversely, all $4$-regular graphs that are either $n$-zigzags or have all their triangles in open chains have a chain vector, whether they are $K_5$ descendants or not; such graphs are called pseudo-descendants.

The chain vector is not unique, as it depends on the choice of the direction of each chain and the ordering of the chains. In addition, $0$s in the chain vector represent a break between two chains, and so we will interpret consecutive $0$s (which come up in later arguments) as equivalent to one $0$.  That is, the following chain vectors are equivalent: $(3,3,2) = (3,2,3) = (2,3,3)$ as are $(3,0,1,2,0,0,2,0)=(3,0,1,2,0,2,0)=(0,2,0,3,0,2,1,0)$. 

Representing non-triangle vertices with negative numbers, the chain vector can be extended to a full vector encoding for $K_5$ descendants (and other pseudo-descendants). Fixing a chain vector fixes an order on the vertices which need additional edges added to obtain a 4-regular graph. These additional edges can then be indicated by how far along this order their other end is. This gives the vector encoding developed by one of us in \cite{LaradjiDTDoK5}, and software was written for working with these vector encodings \cite{code}.  Note that, while this full vector encoding determines the graph, characterizing which vectors give $K_5$ descendants remains tricky.  

The chain vector that is used here is either a list of positive integers indicating a closed $n$-zigzag with those zigzag lengths, or a list with a terminal zero and potentially additional zeros inside where the sublists delimited by the zeros are the lengths of the zigzags in the open chains.

\subsection{Chain vectors of $K_5$ descendants}

We want to characterize the effect of a single DTR on the chain vector of a $K_5$ descendant of order greater than $7$. To do this, we begin by looking at the different ways a DTE can affect the chain vector, and then use the fact that DTR is the inverse of DTE. Although we can consider the effect of DTR directly, this will increase the number of possible DTR cases, as there are certain DTR operations that cannot be performed on any $K_5$ descendant of order greater than $7$.

To determine how a single DTE affects the chain vector of a graph, it is useful to categorize DTEs based on what other triangles or edges exist or can be formed in the neighbourhood of the triangle being expanded. This is done in the following definition.

\newcommand{\setvertex}[2]{%
	\csdef{vertex#1}{#2}}
\setvertex{1}{6}
\setvertex{2}{1}
\setvertex{3}{3}
\setvertex{4}{2}
\setvertex{5}{4}
\setvertex{6}{5}
\setvertex{a}{d}
\newcommand{\vertex}[1]{v_{\csuse{vertex#1}}}

\begin{definition}[DTE type]
	Suppose a $4$-regular simple graph $G$ contains a triangle-and-edge subgraph $H$, consisting of a triangle $(\vertex{2},\vertex{4},\vertex{3})$ and an edge $(\vertex{4},\vertex{5})$, with $\vertex{2},\vertex{4},\vertex{3},\vertex{5}$ distinct. Suppose further that the triangles of $G$ are partitioned into maximal zigzag subgraphs.  Then by swapping $\vertex{2}$ and $\vertex{3}$ if necessary we may assume that $\vertex{2}$ has no common neighbour with $\vertex{4}$ other than $\vertex{3}$.
        The \emph{type} of a DTE is a $3$-tuple $(a,b,c)$, with $a,b,c$ defined as follows:
	\begin{itemize}
		\item If $\exists \vertex{6} \notin V(H)$ such that $(\vertex{4},\vertex{5},\vertex{6})$ is a triangle in $G$, then $a=1$, otherwise $a=0$.
		\item If $\exists \vertex{1} \notin V(H)$ such that $(\vertex{1},\vertex{2},\vertex{3})$ is a triangle in $G$, then $b=1$, otherwise $b=0$.
		\item If $(\vertex{3},\vertex{5})$ is an edge in $G$, then $c=1$, otherwise $c=0$.
	\end{itemize}
	\label{defn:triangletypes}
\end{definition}
Note that the triangles of $G$ being partitioned into maximal zigzag subgraphs further implies that no additional triangles appear involving only the vertices $\vertex{2}, \vertex{4}, \vertex{3}, \vertex{5}$ and $\vertex{1}$ and $\vertex{6}$ when they exist.


Let $G$ be a $K_5$ descendant with a chain vector. Any triangle in $G$ has one of the forms on the left-hand-side of Figure~\ref{fig:triangletypes}. The effect of a single DTE on the chain vector is summarized in Table~\ref{tbl:dtrcveffect} and illustrated in Figure~\ref{fig:triangletypes}. This is proved in the following lemma.


\newlength{\edgewidth}
\setlength{\edgewidth}{1.5pt}
\newlength{\doubleedgewidth}
\setlength{\doubleedgewidth}{1pt}
\newcommand{\scscaling}{0.6}
\newcommand{\dtecolor}{red}

\tikzstyle{point} = [draw, circle, very thick, fill=black, minimum size=4pt, inner sep=0pt]
\tikzstyle{labeled} = [draw,circle,very thick]
\tikzstyle{edge} = [line width=\edgewidth]
\tikzstyle{dteedge} = [line width=\edgewidth, color=\dtecolor]
\tikzstyle{nonedge} = [line width=\edgewidth, dashed]
\tikzstyle{possibleedge} = [decoration={dashsoliddouble}, decorate, line width=\doubleedgewidth]


\newlength{\decorationedgeoffset}
\setlength{\decorationedgeoffset}{0.3pt}

\pgfdeclaredecoration{dashsoliddouble}{initial}{
	\state{initial}[width=\pgfdecoratedinputsegmentlength]{
		\pgfmathsetlengthmacro\lw{1.3*\pgflinewidth}
		\begin{pgfscope}
			\pgfsetlinewidth{\pgflinewidth}
			\pgfpathmoveto{\pgfpoint{-\decorationedgeoffset}{\lw}}%
			\pgfpathlineto{\pgfpoint{\pgfdecoratedinputsegmentlength+\decorationedgeoffset}{\lw}}%
			\pgfmathtruncatemacro\dashnum{%
				round((\pgfdecoratedinputsegmentlength-3pt)/6pt)
			}
			\pgfmathsetmacro\dashscale{%
				(\pgfdecoratedinputsegmentlength+2*\decorationedgeoffset)/(\dashnum*6pt + 3pt)
			}
			\pgfmathsetlengthmacro\dashunit{3pt*\dashscale}
			\pgfsetdash{{\dashunit}{\dashunit}}{0pt}
			\pgfusepath{stroke}
			\pgfsetdash{}{0pt}
			\pgfpathmoveto{\pgfpoint{-\decorationedgeoffset}{-\lw}}%
			\pgfpathlineto{\pgfpoint{\pgfdecoratedinputsegmentlength+\decorationedgeoffset}{-\lw}}%
			\pgfusepath{stroke}
		\end{pgfscope}
	}
}

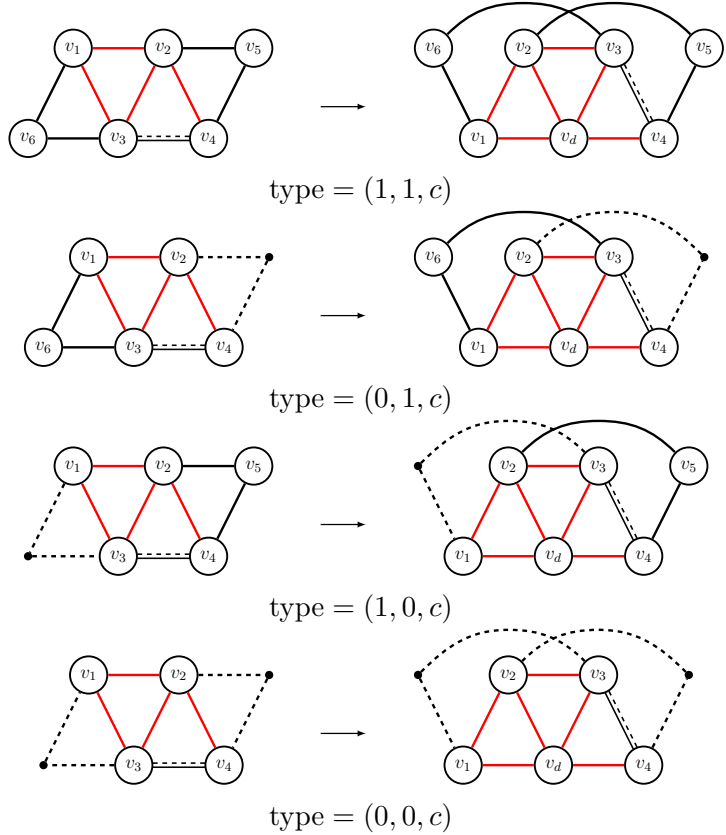
\begin{figure}
  \centering
  \subcaptionbox*{$\mbox{type}=(1,1,c)$}{
  \scalebox{\scscaling}{
  \centering
	\begin{tikzpicture}
	
	\node[labeled] (v1) at (0,0) {$\vertex{1}$};
	\node[labeled] (v2) at (1,2) {$\vertex{2}$};
	\node[labeled] (v3) at (2,0) {$\vertex{3}$};
	\node[labeled] (v4) at (3,2) {$\vertex{4}$};
	\node[labeled] (v5) at (4,0) {$\vertex{5}$};
	\node[labeled] (v6) at (5,2) {$\vertex{6}$};
	
	\draw[dteedge]
	(v2)--(v3)--(v4)--(v2)
	(v4)--(v5)
	;
	\draw[edge]
	(v1)--(v2)
	(v4)--(v6)--(v5)
	(v1)--(v3)
	;
	
	\draw[possibleedge]
	(v3)--(v5);
	
	\end{tikzpicture}
	\separator
	\begin{tikzpicture}
	\node[labeled] (v1) at (0,2) {$\vertex{1}$};
	\node[labeled] (v2) at (1,0) {$\vertex{2}$};
	\node[labeled] (v4) at (2,2) {$\vertex{4}$};
	\node[labeled] (va) at (3,0) {$\vertex{a}$};
	\node[labeled] (v3) at (4,2) {$\vertex{3}$};
	\node[labeled] (v5) at (5,0) {$\vertex{5}$};
	\node[labeled] (v6) at (6,2) {$\vertex{6}$};
	
	\draw[dteedge]
	(v2)--(v4)--(va)--(v3)--(v4)
	(v2)--(va)
	(va)--(v5)
	;
	\draw[edge]
	(v1)--(v2)
	(v5)--(v6)
	[-] (v1)  to [out=45,in=180,in looseness=1] (2cm,3cm) to [out=0,in=135,out looseness=1](v3)
	[-] (v6)  to [out=135,in=0,in looseness=1] (4cm,3cm) to [out=180,in=45,out looseness=1](v4)
	;
	\draw[possibleedge]
	(v3)--(v5);
	\end{tikzpicture}
        }}
  \subcaptionbox*{$\mbox{type}=(0,1,c)$}{
	\scalebox{\scscaling}{
		\centering
		\begin{tikzpicture}
		
		\node[labeled] (v1) at (0,0) {$\vertex{1}$};
		\node[labeled] (v2) at (1,2) {$\vertex{2}$};
		\node[labeled] (v3) at (2,0) {$\vertex{3}$};
		\node[labeled] (v4) at (3,2) {$\vertex{4}$};
		\node[labeled] (v5) at (4,0) {$\vertex{5}$};
		\node[point] (v6) at (5,2) {};
		
		\draw[dteedge]
		(v2)--(v3)--(v4)--(v2)
		(v4)--(v5)
		;
		\draw[edge]
		(v1)--(v2)
		(v1)--(v3)
		;
		
		\draw[nonedge]
		(v4)--(v6)--(v5)
		;
		
		\draw[possibleedge]
		(v3)--(v5);
		
		\end{tikzpicture}
		\separator
		\begin{tikzpicture}
		\node[labeled] (v1) at (0,2) {$\vertex{1}$};
		\node[labeled] (v2) at (1,0) {$\vertex{2}$};
		\node[labeled] (v4) at (2,2) {$\vertex{4}$};
		\node[labeled] (va) at (3,0) {$\vertex{a}$};
		\node[labeled] (v3) at (4,2) {$\vertex{3}$};
		\node[labeled] (v5) at (5,0) {$\vertex{5}$};
		\node[point] (v6) at (6,2) {};
		
		\draw[dteedge]
		(v2)--(v4)--(va)--(v3)--(v4)
		(v2)--(va)
		(va)--(v5)
		;
		\draw[edge]
		(v1)--(v2)
		[-] (v1)  to [out=45,in=180,in looseness=1] (2cm,3cm) to [out=0,in=135,out looseness=1](v3)
		;
		\draw[possibleedge]
		(v3)--(v5);
		\draw[nonedge]
		(v5)--(v6)
		[-] (v6)  to [out=135,in=0,in looseness=1] (4cm,3cm) to [out=180,in=45,out looseness=1](v4)
		;
		\end{tikzpicture}
}}
  \subcaptionbox*{$\mbox{type}=(1,0,c)$}{
	\scalebox{\scscaling}{
		\centering
		\begin{tikzpicture}
		
		\node[point] (v1) at (0,0) {};
		\node[labeled] (v2) at (1,2) {$\vertex{2}$};
		\node[labeled] (v3) at (2,0) {$\vertex{3}$};
		\node[labeled] (v4) at (3,2) {$\vertex{4}$};
		\node[labeled] (v5) at (4,0) {$\vertex{5}$};
		\node[labeled] (v6) at (5,2) {$\vertex{6}$};
		
		\draw[dteedge]
		(v2)--(v3)--(v4)--(v2)
		(v4)--(v5)
		;
		\draw[edge]
		(v4)--(v6)--(v5)
		;
		
		\draw[possibleedge]
		(v3)--(v5);
		
		\draw[nonedge]
		(v2)--(v1)--(v3);
		
		\end{tikzpicture}
		\separator
		\begin{tikzpicture}
		\node[point] (v1) at (0,2) {};
		\node[labeled] (v2) at (1,0) {$\vertex{2}$};
		\node[labeled] (v4) at (2,2) {$\vertex{4}$};
		\node[labeled] (va) at (3,0) {$\vertex{a}$};
		\node[labeled] (v3) at (4,2) {$\vertex{3}$};
		\node[labeled] (v5) at (5,0) {$\vertex{5}$};
		\node[labeled] (v6) at (6,2) {$\vertex{6}$};
		
		\draw[dteedge]
		(v2)--(v4)--(va)--(v3)--(v4)
		(v2)--(va)
		(va)--(v5)
		;
		\draw[edge]
		(v5)--(v6)
		[-] (v6)  to [out=135,in=0,in looseness=1] (4cm,3cm) to [out=180,in=45,out looseness=1](v4)
		;
		\draw[possibleedge]
		(v3)--(v5);
		\draw[nonedge]
		(v1)--(v2)
		[-] (v1)  to [out=45,in=180,in looseness=1] (2cm,3cm) to [out=0,in=135,out looseness=1](v3)
		;
		\end{tikzpicture}
}}
  \subcaptionbox*{$\mbox{type}=(0,0,c)$}{
	\scalebox{\scscaling}{
		\centering
		\begin{tikzpicture}
		
		\node[point] (v1) at (0,0) {};
		\node[labeled] (v2) at (1,2) {$\vertex{2}$};
		\node[labeled] (v3) at (2,0) {$\vertex{3}$};
		\node[labeled] (v4) at (3,2) {$\vertex{4}$};
		\node[labeled] (v5) at (4,0) {$\vertex{5}$};
		\node[point] (v6) at (5,2) {};
		
		\draw[dteedge]
		(v2)--(v3)--(v4)--(v2)
		(v4)--(v5)
		;
		
		\draw[possibleedge]
		(v3)--(v5);
		
		\draw[nonedge]
		(v3)--(v1)--(v2)
		(v4)--(v6)--(v5)
		;
		
		\end{tikzpicture}
		\separator
		\begin{tikzpicture}
		\node[point] (v1) at (0,2) {};
		\node[labeled] (v2) at (1,0) {$\vertex{2}$};
		\node[labeled] (v4) at (2,2) {$\vertex{4}$};
		\node[labeled] (va) at (3,0) {$\vertex{a}$};
		\node[labeled] (v3) at (4,2) {$\vertex{3}$};
		\node[labeled] (v5) at (5,0) {$\vertex{5}$};
		\node[point] (v6) at (6,2) {};
		
		\draw[dteedge]
		(v2)--(v4)--(va)--(v3)--(v4)
		(v2)--(va)
		(va)--(v5)
		;
		\draw[nonedge]
		(v1)--(v2)
		(v5)--(v6)
		[-] (v1)  to [out=45,in=180,in looseness=1] (2cm,3cm) to [out=0,in=135,out looseness=1](v3)
		[-] (v6)  to [out=135,in=0,in looseness=1] (4cm,3cm) to [out=180,in=45,out looseness=1](v4)
		;
		\draw[possibleedge]
		(v3)--(v5);
		\end{tikzpicture}
}}

	\caption[Double triangle expansion types]{This figure shows a double triangle expansion of a triangle in various configurations. In each case, the triangle and edge subgraph $(\vertex{2},\vertex{4},\vertex{3},\vertex{5})$ was expanded to the double triangle and edge subgraph $(\vertex{2},\vertex{4},\vertex{a},\vertex{3},\vertex{5})$, both marked in \dtecolor, where $\vertex{a}$ is the vertex created in the double triangle expansion. Solid lines indicate present edges, and solid-dashed lines possible edges.  In all cases, $c=1$ if the solid-dashed line is present, and $c=0$ if it is absent. There are no additional triangles between the indicated vertices, though there may potentially be additional edges which do not cause new triangles such as an edge between $\vertex{1}$ and $\vertex{5}$ if $c=0$.  An unlabelled vertex is an arbitrary vertex in the graph, and, along with dashed lines, is used to denote the absence of common neighbors not already shown. For instance, in $\mbox{type}=(0,1,c)$, vertices $\vertex{4}$ and $\vertex{5}$ do not share any common neighbors other than possibly $\vertex{3}$. The effect of double triangle expansion on the chain vector is summarized in Table~\ref{tbl:dtrcveffect}.}
\label{fig:triangletypes}
\end{figure}

\begin{table}
	\caption[Double Triangle Reduction's Effect on the Chain Vector]{This table summarizes the different ways a single double triangle reduction can affect the chain vector of a pseudo-descendant. In this table, $l,m,n \in \mathbb{Z}_{\geq 0}$ and $c \in \{0,1\}$. We denote the changed part within brackets, and there can be any number of terms to the left of `$[$' or to the right of `$]$'.} 
	\begin{center}
		\begin{tabular}{|l|l|c|}
			\hline			
			\bf{Child vector part} & \bf{Parent vector part} & \bf{Type} \\ \hline
			$[m,3,n]$ & $[m+n+4]$ & $(1,1,1)$ \\ \hline
			$[m,3]$ & $[m+3]$ & $(0,1,1)$ \\ \hline
			$[3,n]$ & $[n+3]$ & $(1,0,1)$ \\ \hline
			$[m,2,0,n]$ & $[m+2,n+1]$ & $(1,1,0)$ \\ \hline
			$[m,2,0]$ & $[m+2,0]$ & $(0,1,0)$ \\ \hline
			$[2,0,n]$ & $[1,n+1]$ & $(1,0,0)$ \\ \hline
			$[l+2]$ & $[l+1]$ & $(0,0,c)$ \\ \hline
		\end{tabular}
	\end{center}
	\label{tbl:dtrcveffect}
\end{table}


\begin{lemma}
	Let $G$ be a $K_5$ descendant with a chain vector and at least $7$ vertices. Expanding a triangle in $G$ transforms the chain vector of $G$ from a parent vector to the corresponding child vector in Table~\ref{tbl:dtrcveffect}.
	\label{lem:triangletypes} 
\end{lemma}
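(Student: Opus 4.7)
The strategy is a case analysis on the DTE type $(a,b,c)\in\{0,1\}^3$, verifying each of the seven rows of Table~\ref{tbl:dtrcveffect} in turn. Since $G$ is a $K_5$ descendant of order $\geq 7$, Lemma~\ref{lem zigzag shape} applies, so the triangles of $G$ partition into maximal zigzag subgraphs, with neither $K_4$ nor a triple triangle appearing. This means the zigzag(s) in a small neighbourhood of $T=(v_2,v_3,v_4)$ are completely pinned down by the type: $b=1$ records that $(v_1,v_2,v_3)$ is a neighbouring zigzag triangle of $T$ sharing the edge $v_2v_3$; $c=1$ records the same for $(v_3,v_4,v_5)$, which shares $v_3v_4$ and forces $v_3v_5$ to be an edge; $a=1$ records that $(v_4,v_5,v_6)$ is a triangle sharing the pendant edge $v_4v_5$; and the swap convention that $v_2$ and $v_4$ have no common neighbour besides $v_3$ rules out any other triangle through $v_2v_4$.

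For each type the argument has three parts. First, identify the maximal zigzag(s) containing $T$ and the adjacent triangles in $G$, parametrising their lengths away from $T$ by $m$, $n$, or $l$ as in the table. Second, apply the DTE, which deletes the edges $v_2v_3$ and $v_4v_5$ and introduces a new vertex $v_a$ adjacent to $v_2,v_3,v_4,v_5$: this destroys exactly the triangles of $G$ using either of those two edges, and creates the triangles through $v_a$ whose other two vertices are adjacent in $G'$, namely a subset of $\{(v_2,v_4,v_a),(v_3,v_4,v_a),(v_3,v_5,v_a)\}$ controlled by $c$. Third, reassemble the surviving and newly created triangles into maximal zigzags in $G'$ and read off the resulting chain-vector fragment.

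The template is transparent in the generic case $(1,1,1)$: the four triangles $(v_1,v_2,v_3)$, $(v_2,v_3,v_4)$, $(v_3,v_4,v_5)$, $(v_4,v_5,v_6)$ sit consecutively inside one maximal zigzag of length $m+n+4$; the DTE destroys them and, because $c=1$, creates three new triangles through $v_a$, which one checks form a $Z_3^*$ with end vertices $v_2$ and $v_5$. These ends coincide with the free ends of the length-$m$ and length-$n$ remnants of the original zigzag, giving the child fragment $[m,3,n]$. In the variants, when $c=0$ only $(v_2,v_4,v_a)$ and $(v_3,v_4,v_a)$ appear at $v_a$, forming a $Z_2^*$ whose far end is $v_3$ rather than $v_5$, so a chain break $0$ appears on the side of $v_5$. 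When $a=0$ or $b=0$ the corresponding zigzag tail is absent and that side of the new zigzag becomes a chain end. Type $(0,0,c)$ is exceptional: neither $v_2v_3$ nor $v_4v_5$ is shared with another zigzag triangle on the relevant side, so the DTE simply replaces a length-$(l+1)$ zigzag containing $T$ (together with $(v_3,v_4,v_5)$ when $c=1$) by a length-$(l+2)$ zigzag containing $v_a$.

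The main obstacle will be the bookkeeping at the degenerate boundary cases where $m$, $n$, or $l$ equals zero: a putative zigzag of length zero must be interpreted correctly either as a shared end vertex or as a chain break, using the convention that consecutive zeros in the chain vector collapse to a single zero. Checking that the DTE does not accidentally produce a $K_4$, triple triangle, or double edge that would invalidate the chain-vector bookkeeping is automatic from Lemma~\ref{lem zigzag shape} applied to $G$ together with the constraints imposed by the type.
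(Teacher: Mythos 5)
Your proposal takes essentially the same route as the paper's proof: invoke Lemma~\ref{lem zigzag shape} to pin down the local zigzag structure determined by the type $(a,b,c)$, track which triangles the DTE destroys (those on the two deleted edges) and which it creates (those through the new vertex, a subset controlled by $c$), and reassemble the remnants and the new $Z_3^*$ or $Z_2^*$ into the child chain-vector fragment, handling the $m=0$, $n=0$ degeneracies via the consecutive-zeros convention. Your identification of the new zigzag's end vertices (the two outer triangle vertices when $c=1$, versus one outer vertex and the shared vertex when $c=0$, which is exactly what forces the chain break $0$) is the crux of the paper's argument as well, so the proposal is correct and essentially identical in approach.
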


Note that Corollary~\ref{result:chainvectordtrlist} will show that all $K_5$ descendants have chain vectors, but the hypothesis is necessary for the time being as this lemma is used in the proof of Corollary~\ref{result:chainvectordtrlist}.

\begin{proof}
	Assume the hypotheses.  Let the chain vector of $G$ be $(z_1,z_2,\cdots,z_k)$, where $k \geq 1$, and $z_i\geq 0$ for $1 \leq i \leq k$.  Let $H$ be a triangle-and-edge subgraph as in Definition~\ref{defn:triangletypes} of type $(a,b,c)$. By Lemma~\ref{lem zigzag shape} the triangles of $G$ are partitioned into maximal zigzag subgraphs and so all triangle-and-edge subgraphs fit into one of the cases of Lemma~\ref{lem zigzag shape} with no additional triangles between the vertices indicated in Figure~\ref{fig:triangletypes}.
	
	Consider the cases when $a+b\geq1$. If there is an edge $(\vertex{1},\vertex{2})$ that is part of a zigzag ($\vertex{1}\neq \vertex{3}, \vertex{4}$), then swap $\vertex{1}$ with the unnamed neighbour of $\vertex{2}$, if necessary, so that the unnamed neighbour is not a neighbour of $\vertex{3}$, and  let $m$ denote the number of triangles in that zigzag to the left of $(\vertex{1},\vertex{2})$ (where $\vertex{1}$ is to the right of the unnamed neighbour of $\vertex{2}$).  If there is no such $\vertex{1}$, let $m=0$.  Similarly, if there is an edge $(\vertex{5},\vertex{6})$ that is part of a zigzag ($\vertex{6}\neq \vertex{3}, \vertex{4}$), then swap $\vertex{6}$ with the unnamed neighbour of $\vertex{5}$, if necessary, so that the unnamed neighbour is not a neighbour of $\vertex{4}$, and let $n$ denote the number of triangles in that zigzag to the right of $(\vertex{5},\vertex{6})$ (where $\vertex{6}$ is to the left of the unnamed neighbour of $\vertex{5}$).  If there is no such $\vertex{6}$ let $n=0$.  The definition of $m$ and $n$ is illustrated in Figure~\ref{fig m and n}.

\begin{figure}
\[
        \includegraphics{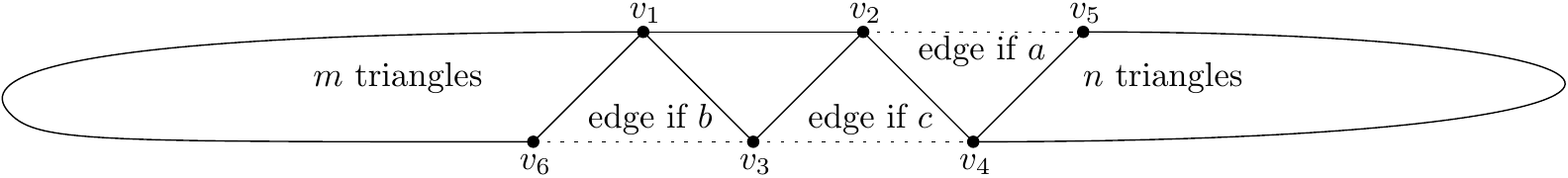}
\]
\caption{Schematic of the definition of $m$ and $n$ in the case $a+b\geq 1$.}\label{fig m and n}
\end{figure}
	
	Assume first that $c=1$. On the left-hand-side, we have the parent chain vector $(\dots, m+n+4, \dots)$, $(\dots, m+3, n, \dots)$ or $(\dots, m, n+3, \dots)$ if $a=b=1$, $a=0$ and $b=1$, or $a=1$ and $b=0$, respectively. After a single DTE, the child vector is $(\dots, m, 3, n, \dots)$, since the $m$ triangles on the left and the $n$ triangles on the right will be in separate zigzags from the middle $3$-triangle zigzag, and if $m=0$ or $n=0$ a gap, encoded by a $0$, will be created on either side, respectively.
	
	Assume now that $c=0$. On the left-hand-side, we have the parent chain vector $(\dots, m+2, n+1, \dots)$, $(\dots, m+2, 0, n, \dots)$ or $(\dots, m, 1, n+1, \dots)$ if $a=b=1$, $a=0$ and $b=1$, or $a=1$ and $b=0$, respectively. In all cases, the child chain vector is $(\dots, m, 2, 0, n, \dots)$, as was argued for when $c=1$.
	
	Finally, assume that $a=b=0$. Let $l\geq 0$ be the number of triangles in the maximal zigzag piece containing the triangle $(\vertex{2},\vertex{3},\vertex{4})$.  Note that unlike in the other cases, we can have the triangles $(\vertex{2}, \vertex{4}, v)$ or $(\vertex{3}, \vertex{5}, u)$ for some $u,v \notin \{\vertex{2},\vertex{3},\vertex{4},\vertex{5}\}$ with $u \neq v$, and in fact these are the only directions that the zigzag containing $(\vertex{2}, \vertex{3}, \vertex{4})$ can go in. 
        Whether $c=0$ or $c=1$, a single DTE will transform the parent vector $(\dots, l+1,\dots)$ to the child vector $(\dots, l+2, \dots)$.
\end{proof}

Since double triangle reduction is the inverse of double triangle expansion, we conclude the following corollary from Lemma~\ref{lem:triangletypes}.

\begin{cor}
	Let $G$ be a $K_5$ descendant with at least $8$ vertices. Reducing a double triangle in $G$ transforms the chain vector of $G$ from a child vector to the corresponding parent vector in Table~\ref{tbl:dtrcveffect}.
	\label{result:chainvectordtrlist}
\end{cor}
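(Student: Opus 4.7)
The plan is to invoke Lemma~\ref{lem:triangletypes} on the reverse operation. Let $G$ be a $K_5$ descendant with $|V(G)| \geq 8$, and let $G'$ denote the graph obtained by reducing some proper double triangle in $G$. Then $G$ is precisely the result of performing a single DTE on $G'$, and the corollary will follow by reading each row of Table~\ref{tbl:dtrcveffect} in reverse.

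First I would verify that Lemma~\ref{lem:triangletypes} applies to $G'$. Since $G$ has $K_5$ as ancestor and DTRs commute by Theorem~\ref{thm:ancestor}, the graph $G'$ is again a $K_5$ descendant (one can continue reducing $G'$ until reaching $K_5$). The order satisfies $|V(G')| = |V(G)| - 1 \geq 7$, so by Corollary~\ref{lem K5 desc is pseudo desc} the graph $G'$ has a chain vector, and the same corollary gives a chain vector for $G$. Definition~\ref{defn:triangletypes} classifies DTEs exhaustively by the type $(a,b,c) \in \{0,1\}^3$, so each DTR on $G$ arises as the inverse of some DTE falling into one of the rows of Table~\ref{tbl:dtrcveffect}. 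Applying Lemma~\ref{lem:triangletypes} to the DTE from $G'$ to $G$ therefore yields the chain-vector transformation given by that row, and reversing the arrow gives precisely the DTR transformation claimed in the statement.

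The only real content here is the observation that DTR inverts DTE, so the main (and essentially only) obstacle is the minor bookkeeping of verifying the lemma's hypotheses, which is routine given the order bound and the prior results. Since Lemma~\ref{lem:triangletypes} already handles every DTE case and Corollary~\ref{lem K5 desc is pseudo desc} supplies the chain vectors, no further case analysis is required.
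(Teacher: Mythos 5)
Your proof is correct and matches the paper's: the paper derives this corollary in one line from Lemma~\ref{lem:triangletypes} by observing that DTR inverts DTE, and you give the same argument with the routine hypothesis-checking (that $G'$ is a $K_5$ descendant of order $\geq 7$ possessing a chain vector) made explicit. Your forward appeal to Corollary~\ref{lem K5 desc is pseudo desc} is not circular, since that corollary is established by induction from Lemma~\ref{lem:triangletypes} alone and does not depend on the statement being proved.
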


By induction with Lemma~\ref{lem:triangletypes}, beginning with the unique $K_5$ descendant with $7$ vertices (which is a $1$-zigzag) we also obtain the following corollary.

\begin{cor}\label{lem K5 desc is pseudo desc} 
        Let $G$ be a $K_5$ descendant with at least 7 vertices, then $G$ has a chain vector. That is, either $G$ is an $n$-zigzag or all triangles of $G$ are in open chains.
\end{cor}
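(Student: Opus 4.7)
The plan is to induct on the order $n = |V(G)|$, using Lemma~\ref{lem:triangletypes} as the inductive engine. For the base case $n=7$, direct enumeration (or Figure~\ref{fig:K5familytree}) shows that the only $K_5$ descendant of order $7$ is the 1-zigzag $\hat{Z}_5$, whose unique closed chain consists of a single zigzag of length $5$; it therefore has chain vector $(5)$ and is an $n$-zigzag in the sense of the statement.

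For the inductive step, suppose the conclusion holds for all $K_5$ descendants of orders between $7$ and $n-1$, and let $G$ be a $K_5$ descendant with $|V(G)| = n \geq 8$. Since $G \neq K_5$, it is the result of a single DTE applied to some $K_5$ descendant $G'$ on $n-1 \geq 7$ vertices, and the inductive hypothesis supplies $G'$ with a chain vector. Lemma~\ref{lem:triangletypes} then applies to $G'$ and asserts that the DTE transforms its chain vector into one of the \emph{child vector} patterns listed in Table~\ref{tbl:dtrcveffect}. Interpreting internal zeros as separators between open chains, as the definition of chain vector prescribes, each such pattern is itself the chain vector of a graph whose triangles sit either in a single closed chain of zigzags or in a disjoint union of open chains, possibly alongside non-triangle vertices. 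This is precisely what it means to have a chain vector, so $G$ has one as well.

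The substantive content is already packaged inside Lemma~\ref{lem:triangletypes}; what remains is a short sanity check that every entry in the child-vector column of Table~\ref{tbl:dtrcveffect} really does describe one of the two structural types permitted by the definition. This is immediate by inspection: each DTE either splits a single zigzag piece of the parent into shorter pieces separated by a $3$-triangle or a $2$-triangle block (possibly introducing a $0$ and hence a chain break), or it lengthens a single zigzag piece by one triangle. In either situation the partition of triangles into maximal zigzag subgraphs given by Lemma~\ref{lem zigzag shape} persists, and the ``open chains plus at most one closed chain'' structure is preserved. The only conceptual subtlety to be careful with is the handling of zeros inside a chain vector, which requires the convention already established in the paragraphs preceding Corollary~\ref{result:chainvectordtrlist}; beyond this there is no real obstacle.
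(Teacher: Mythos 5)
Your proof is correct and follows the same route as the paper, which obtains this corollary by exactly this induction on the order, applying Lemma~\ref{lem:triangletypes} at each step with the unique order-$7$ descendant (a $1$-zigzag) as the base case. One tiny slip that does not affect the argument: chain-vector entries record numbers of triangles, so the base case $\hat{Z}_5$, which has $7$ triangles, has chain vector $(7)$ rather than $(5)$.
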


Note that Corollary~\ref{lem K5 desc is pseudo desc} says more than Lemma~\ref{lem zigzag shape} as it means there can be at most one closed zigzag chain and in that case there are no open zigzag chains.

We note that, for a general $4$-regular graph $G$ with all triangles in open chains, Corollary~\ref{result:chainvectordtrlist} does not necessarily hold. For example, referring to Figure~\ref{fig:triangletypes}, a double triangle in $G$ might have the vertices as in type $(1,1,1)$, with the additional edges $(\vertex{2},v)$ and $(\vertex{4},v)$. Upon reducing the double triangle, a triple triangle $K_{3,1,1}$ is created. This is not possible if $G$ is a $K_5$ descendant with at least $8$ vertices, as the only $K_5$ descendant with a triple triangle is $K_5$ itself (Lemma~\ref{lem zigzag shape}).


The converse of Corollary~\ref{lem K5 desc is pseudo desc} does not hold. For example, any $4$-regular graph with no triangles has a chain vector (namely, $(0)$), but is not a $K_5$ descendant. See \cite[Figure 4.9]{LaradjiDTDoK5} for a less trivial example.

By inspection of the outcome of a DTE of each triangle type and checking the small cases directly we obtain the following lemma.
\begin{lemma}\label{lem:levelchange}
	Let $G$ be a $K_5$-descendant with at least $7$ vertices, and let $H$ be a child of $G$. Then,
	\begin{itemize}
		\item $\emph{tri}(H)=\emph{tri}(G)+c$, where $c \in \{-1,0,1\}$
		\item $\mbox{L}(H)=\mbox{L}(G)+c$, where $c \in \{0,1,2\}$.
	\end{itemize}
\end{lemma}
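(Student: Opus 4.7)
The plan is to deduce both bullets from a single computation of how DTE changes the triangle count. The crucial elementary observation is that a DTE adds exactly one vertex to $G$ (the identified subdivision vertex $v_a$ of Definition 2.7), and so $|V(H)| = |V(G)|+1$. Writing $\delta = \mbox{tri}(H) - \mbox{tri}(G)$, it follows that
\[
L(H) - L(G) = \bigl(|V(H)| - \mbox{tri}(H)\bigr) - \bigl(|V(G)| - \mbox{tri}(G)\bigr) = 1 - \delta.
\]
Thus the two bullets are equivalent: $\delta \in \{-1,0,1\}$ if and only if $1-\delta \in \{0,1,2\}$, and it suffices to establish the first bullet.

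To bound $\delta$, I would invoke the triangle-type classification. Since $G$ is a $K_5$ descendant on at least $7$ vertices, Corollary~\ref{lem K5 desc is pseudo desc} guarantees that $G$ has a chain vector, so the hypothesis of Lemma~\ref{lem:triangletypes} is satisfied. That lemma, together with Table~\ref{tbl:dtrcveffect}, asserts that the DTE producing $H$ falls into exactly one of seven rows indexed by the type $(a,b,c)$ of the expanded triangle-and-edge subgraph, and it describes how the corresponding part of the chain vector of $G$ is replaced to give the chain vector of $H$.

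For each row of the table I would compute $\delta$ directly, by summing the zigzag lengths on the child side and subtracting the sum on the parent side, remembering that the entries equal to $0$ are chain separators and contribute no triangles. A row-by-row inspection yields $\delta = -1$ for the rows of type $(1,1,1)$ and $(1,1,0)$, $\delta = 0$ for the rows of type $(0,1,1)$, $(1,0,1)$, $(0,1,0)$, and $(1,0,0)$, and $\delta = +1$ for the row of type $(0,0,c)$. In particular $\delta \in \{-1,0,1\}$ in every case, which is the first bullet; the second then follows from the opening observation.

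There is no conceptual obstacle here once Lemma~\ref{lem:triangletypes} and Corollary~\ref{result:chainvectordtrlist} are available; the only care needed is bookkeeping. One must keep straight that the table is written with the child part on the left and the parent part on the right, so that the sign of $\delta$ comes out correctly, and one must remember that the $0$ entries separate distinct open chains and do not contribute to the triangle count. The only potentially delicate case is the smallest one, where $G$ has exactly $7$ vertices (so $G = \hat{Z}_5$); but this is a single $1$-zigzag already handled by the closed-chain branch of the chain-vector formalism, and a direct check on the unique DTE from $\hat{Z}_5$ to $\hat{Z}_6$ confirms $\delta = +1$ there, consistent with the type $(0,0,c)$ row.
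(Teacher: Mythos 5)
Your proposal is correct and matches the paper's approach: the paper proves this lemma by exactly the inspection you describe, namely reading off the change in triangle count from each type $(a,b,c)$ row of Table~\ref{tbl:dtrcveffect} and checking the small cases directly, with the relation $L(H)-L(G)=1-\bigl(\mbox{tri}(H)-\mbox{tri}(G)\bigr)$ coming from the fact that a DTE adds exactly one vertex. Your row-by-row values of $\delta$ and your separate treatment of the $7$-vertex graph $\hat{Z}_5$ are both accurate, so this is simply a fleshed-out version of the paper's one-line argument.
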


Through Lemma~\ref{lem:levelchange} and the definition of the chain vector, we can also conclude the following lemma.
\begin{lemma}
	All $K_5$-descendants of order $\geq 9$ that are not $1$-zigzags can be double triangle reduced to the unique $K_5$-descendant of level $2$ with $8$ vertices; this $K_5$ descendant has chain vector $(3,3)$.
	\label{lem:anc33}
\end{lemma}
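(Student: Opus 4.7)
My plan is to proceed by strong induction on the order $n$ of $G$. By Propositions~\ref{prop:level0} and~\ref{prop:level1}, $L(G)\geq 2$, since level-$0$ $K_5$ descendants are exactly the $1$-zigzags and no level-$1$ descendants exist. For the base case $n=9$, Table~\ref{tbl:ordertri} shows there are precisely three non-$1$-zigzag $K_5$ descendants of order $9$, one each of levels $2$, $3$, and $4$; Figure~\ref{fig:K5familytree} displays an edge from each of these to the node $\texttt{4-2}$, which by Proposition~\ref{prop:level2} is the unique $K_5$ descendant of order $8$ and level $2$ and hence is the $(3,3)$ graph. Thus each such graph has $(3,3)$ as a DTR parent, establishing the base case directly.

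For the inductive step $n\geq 10$, it suffices to exhibit a single DTR from $G$ to a $K_5$ descendant $G'$ of order $n-1$ with $L(G')\geq 2$ (so $G'$ is not a $1$-zigzag), after which the induction hypothesis applied to $G'$ completes the argument. A proper double triangle is always available: $G$ came from some DTE that necessarily created a double triangle, and Lemma~\ref{lem zigzag shape} forbids $K_{3,1,1}$ for $|V(G)|>5$. When $L(G)\geq 3$, \emph{any} DTR suffices, because Lemma~\ref{lem:levelchange} gives $L(G')\geq L(G)-2\geq 1$, and Proposition~\ref{prop:level1} rules out $L(G')=1$, forcing $L(G')\geq 2$.

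When $L(G)=2$ the DTR must be chosen more carefully. By Proposition~\ref{prop:level2}, $G$ is a $2$-zigzag with chain vector $(z_1,z_2)$, where $z_1\geq z_2\geq 3$ and $z_1+z_2=n-2\geq 8$; in particular $z_1\geq 4$. The longer zigzag then contains a double triangle $(t_i,t_{i+1})$ strictly in its interior (taking, for instance, $i=2$, which is legal precisely because $z_1\geq 4$), and its reduction corresponds to the row $[l+2]\to[l+1]$ of type $(0,0,c)$ in Table~\ref{tbl:dtrcveffect} via Lemma~\ref{lem:triangletypes}: only the entry $z_1$ changes, becoming $z_1-1$, while $z_2$ and the closed-chain structure are untouched. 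The result $G'$ has chain vector $(z_1-1,z_2)$ with $z_1-1\geq 3$, so by Proposition~\ref{prop:level2} it is again a level-$2$ $2$-zigzag, not a $1$-zigzag, and the induction applies.

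The main obstacle is the $L(G)=2$ case, where one must identify a DTR that preserves the $2$-zigzag structure rather than collapsing the closed chain into a $1$-zigzag. The hypothesis $z_1\geq 4$, forced by $n\geq 10$, is exactly what provides an interior double triangle in the longer zigzag and hence a type-$(0,0,c)$ reduction. The boundary case $z_1=z_2=3$ (order $8$) shows why this matters: every DTR available on the target graph $(3,3)$ itself produces the $1$-zigzag $\hat{Z}_5$, which is precisely why $(3,3)$ is the minimum element of the induction.
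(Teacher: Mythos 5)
Your proposal is correct and takes essentially the same route as the paper's proof: both arguments combine Propositions~\ref{prop:level0}, \ref{prop:level1}, and \ref{prop:level2} with Lemma~\ref{lem:levelchange} to show that any reduction sequence must pass through a level-$2$ descendant (a $2$-zigzag), which is then shortened to $(3,3)$. The only difference is presentational: the paper descends directly and leaves the final shortening to ``an appropriate choice of double triangle reductions,'' whereas you package the argument as an induction on order and make that last step explicit via the $(0,0,c)$ row of Table~\ref{tbl:dtrcveffect}.
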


\begin{proof}
        Let $G$ be as described in the statement and let $L$ be its level.  By Proposition~\ref{prop:level0} $G$ not being a 1-zigzag means that $L\neq 0$.  By Proposition~\ref{prop:level1} $L\neq 1$, so $L\geq 2$.  By Lemma~\ref{lem:levelchange} double triangle reductions decrease $L$ by $1$ or $2$ or leave it unchanged.  Therefore, some ancestor of $G$ has level $2$.  By Proposition~\ref{prop:level2} this ancestor is a completed primitive $2$-zigzag which by appropriate choice of double triangle reductions can be reduced to the completed primitive $2$-zigzag with chain vector $(3,3)$.
\end{proof}

Through the definition of the chain vector, we have the following result.
\begin{lemma}
	Let $G$ be a $K_5$ descendant with at least $7$ vertices, and suppose that $(z_1, \dots, z_k)$ is a chain vector of $G$. Then,
	\begin{align*}
		\sum{z_i(G)}=\mbox{tri}(G).
	\end{align*}
	\label{lem:triG is sumCV}
\end{lemma}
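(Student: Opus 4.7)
The plan is to observe that this lemma is essentially an immediate consequence of the definitions together with the structural results already established. First I would invoke Corollary~\ref{lem K5 desc is pseudo desc}, which guarantees that every $K_5$ descendant of order at least $7$ has a chain vector, so the hypothesis that $(z_1, \dots, z_k)$ is a chain vector of $G$ is not vacuous. Next I would recall from Lemma~\ref{lem zigzag shape} that the triangles of $G$ are partitioned into maximal zigzag subgraphs; these are exactly the zigzag pieces whose lengths are recorded in the chain vector (with $0$ entries serving only as markers of breaks between open chains and contributing no triangles).

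From there the argument is a direct accounting. By the definition of the length of a zigzag stated at the beginning of the subsection, each $z_i$ equals the number of triangles in the $i$-th zigzag piece (or is $0$ if the entry is a chain-break marker). Because the maximal zigzag pieces partition the triangles of $G$, every triangle of $G$ is counted exactly once in $\sum_i z_i$, and no spurious triangles are counted. Hence $\sum_{i=1}^{k} z_i = \mathrm{tri}(G)$.

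The only mild subtlety to address is the convention for concatenation and consecutive zeros in the chain vector: since zeros contribute nothing to the sum, the various equivalent forms of the chain vector (for instance those arising from consecutive $0$s being collapsed, or from different orderings of the open chains) all yield the same sum, so the statement is well-posed. There is no real obstacle here; the lemma is in effect a bookkeeping statement that makes explicit what is implicit in the definition of the chain vector, and its utility is to package this identity for use in the subsequent arguments about the minimum number of triangles in a $K_5$ descendant.
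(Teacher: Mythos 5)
Your proposal is correct and matches the paper's approach: the paper simply asserts the lemma follows ``through the definition of the chain vector,'' and your argument spells out exactly that bookkeeping, using Lemma~\ref{lem zigzag shape} for the partition of triangles into maximal zigzags and noting that the zero entries contribute nothing to the sum.
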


As was argued in the proof of Proposition~\ref{prop:level2}, a $2$-zigzag $G$ must have at least $3$ triangles in each zigzag, as otherwise additional triangles are created and $G$ is no longer a $2$-zigzag. On the other hand, recall from Tables~\ref{tab L=3 not small} and~\ref{tab L=3 small} in the proof of Proposition~\ref{prop:level3} that $3$-zigzags that are $K_5$ descendants must have at least $3$ triangles in each of at least two zigzags. Hence, we arrive at the following lemma.
\begin{lemma}
	Let $G$ be a $K_5$ descendant and $m \geq 2$. If $G$ has a chain vector, then it cannot be one of the following:
	\begin{align*}
		(m,1),(m,1,1),(m,2).
	\end{align*}
	\label{lem:unrealizable cv for k5d}
\end{lemma}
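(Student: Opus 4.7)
\bigskip

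\noindent\textbf{Proof plan.} The plan is to read the lemma as a direct consequence of the enumerative analysis already carried out for levels $2$ and $3$. First, I would unpack what each of the three chain vectors means: since none of $(m,1)$, $(m,2)$, $(m,1,1)$ has a trailing $0$, by the chain vector convention established just before this lemma, each describes a single closed chain; thus $(m,1)$ and $(m,2)$ force $G$ to be a $2$-zigzag and $(m,1,1)$ forces $G$ to be a $3$-zigzag.

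For the first two vectors, I would invoke the 2-zigzag analysis in the proof of Proposition~\ref{prop:level2}: when two zigzags are joined at both ends and completed to a $4$-regular simple graph, there is a unique way to add the extra edges, and this completion introduces no new triangles precisely when both component zigzags have length at least $3$; if one component has length $1$ or $2$, additional triangles appear and they merge with the short zigzag so that the maximal zigzag decomposition of $G$ no longer matches the claimed vector. In other words, no $K_5$ descendant whose triangle decomposition gives a closed $2$-chain can have a component of length $\leq 2$, ruling out $(m,1)$ and $(m,2)$ for every $m\geq 2$.

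For $(m,1,1)$, I would appeal to Tables~\ref{tab L=3 not small} and~\ref{tab L=3 small} from the proof of Proposition~\ref{prop:level3}, which together enumerate every $K_5$-descendant $3$-zigzag. In every row, at least two of the three component zigzags have length $\geq 3$; short components only ever appear as a third zigzag accompanying two long ones. The chain vector $(m,1,1)$, however, has at most one entry $\geq 3$ (namely $m$, and only when $m\geq 3$), so it corresponds to none of the enumerated configurations.

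The argument is essentially a translation of previously established constraints into the language of chain vectors, so there is no substantive obstacle; the only point requiring care is the bookkeeping that a missing trailing $0$ forces a closed chain, so that Propositions~\ref{prop:level2} and~\ref{prop:level3} (rather than configurations involving open chains or extra non-triangle vertices) are the relevant reference.
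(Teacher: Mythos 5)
Your proposal is correct and follows essentially the same route as the paper: the paper deduces this lemma directly from the observation in the proof of Proposition~\ref{prop:level2} that a $2$-zigzag must have at least $3$ triangles in each component zigzag (ruling out $(m,1)$ and $(m,2)$) and from Tables~\ref{tab L=3 not small} and~\ref{tab L=3 small} showing that $K_5$-descendant $3$-zigzags must have at least two components of length $\geq 3$ (ruling out $(m,1,1)$). Your explicit remark that the absence of a trailing $0$ forces a closed chain, so that the level $2$ and $3$ closed-chain analyses are the relevant references, is a piece of bookkeeping the paper leaves implicit but does not change the argument.
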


Additionally, we can observe some chain vectors where only the $(0,0,c)$ DTR is possible
\begin{lemma}\label{lem 00c observe}\mbox{}
\begin{itemize}
        \item The only transformation of the chain vector $(n)$ by DTRs is to $(n-1)$, and so in particular no chain of DTRs on $(n)$ can reach $(3,3)$.
        \item  If a chain vector has no entries equal to $2$ or $3$ then only the type $(0,0,c)$ DTR is possible, and so in particular for chain vectors with exactly one entry greater than $3$ and all other entries $0$ or $1$, the only possible DTRs lead to the chain vector with the large entry replaced by $3$ and the other entries unchanged.  From there other reductions may be possible.
\end{itemize}
\end{lemma}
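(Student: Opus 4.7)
The plan is to proceed by direct case-analysis of Table~\ref{tbl:dtrcveffect}, which by Corollary~\ref{result:chainvectordtrlist} enumerates every possible effect of a DTR on the chain vector of a $K_5$ descendant (of order $\geq 8$). The whole argument is a structural reading of that table.

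For the first bullet, observe that $(n)$ has a single entry. Of the seven rows of Table~\ref{tbl:dtrcveffect}, the child patterns in rows 1 and 4 each involve three consecutive entries, and the child patterns in rows 2, 3, 5, and 6 each involve two consecutive entries; none of these six can match a length-one chain vector. Only the row 7 pattern $[l+2]$ of type $(0,0,c)$ fits, and it requires $n \geq 2$ and replaces $n$ by $n-1$. Thus every DTR from $(n)$ yields $(n-1)$, so by iteration the single-entry form is preserved and the two-entry vector $(3,3)$ is never reached.

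For the second bullet, suppose the chain vector contains no entry equal to $2$ or $3$. Rows 1, 2, and 3 of Table~\ref{tbl:dtrcveffect} each require a literal $3$ at a prescribed position of the child vector, and rows 4, 5, and 6 each require a literal $2$; none of these six rows can be applied. Only row 7 is left, so the only possible DTR is of type $(0,0,c)$. Moreover $l+2 \geq 2$, and since $2$ and $3$ are forbidden, the entry that is reduced must have value at least $4$. Specializing now to a chain vector with one entry $n > 3$ and all others in $\{0,1\}$, the entry $n$ is the unique one of value $\geq 4$, so the only DTR replaces $n$ by $n-1$ while leaving the others fixed. If $n-1 > 3$, the hypothesis of the second bullet still holds and the same analysis reapplies to the same position. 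Iterating, every sequence of DTRs decrements the distinguished large entry one step at a time until it equals $3$, at which point rows 1--3 may match and further reductions of a different type can occur.

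The argument has no real obstacle; the only point that deserves to be stated explicitly is that the constants $2$ and $3$ in the bracketed patterns of Table~\ref{tbl:dtrcveffect} are literal values required to appear in the child chain vector, not placeholders like $m$, $n$, $l$. This is immediate from the table and from the proof of Lemma~\ref{lem:triangletypes}, so the lemma follows by a short inspection rather than any inductive combinatorial analysis.
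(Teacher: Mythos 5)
Your proposal is correct and matches the paper's (implicit) justification: the lemma is stated in the paper as a direct observation from Table~\ref{tbl:dtrcveffect} together with Corollary~\ref{result:chainvectordtrlist}, and your inspection---that rows 1--6 each require a literal $2$ or $3$ (and at least two entries) in the child pattern, leaving only the $(0,0,c)$ row---is exactly that argument made explicit. The only nit is a miscount of the pattern lengths of rows 5 and 6 (three entries, not two), which does not affect the conclusion.
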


\subsection{Minimum triangle count}

\noindent We now proceed to prove the main result of the section.

%

\begin{thm}
	If $G$ is a $K_5$-descendant, then $\mbox{tri}(G)\geq 4$.
	\label{thm:tri4}
\end{thm}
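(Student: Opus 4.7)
The plan is a proof by contradiction. Suppose $G$ is a $K_5$-descendant with $\mathrm{tri}(G)\leq 3$. Direct inspection of Table~\ref{tbl:ordertri} shows that every $K_5$-descendant of order at most $9$ has at least five triangles, so we may assume $|V(G)|\geq 10$. A $1$-zigzag $\hat Z_n$ of such order has $n \geq 8 > 3$ triangles, so $G$ is not a $1$-zigzag. By Corollary~\ref{lem K5 desc is pseudo desc} and Lemma~\ref{lem:triG is sumCV}, $G$ possesses a chain vector $\mathbf z$ with $\sum z_i = \mathrm{tri}(G) \leq 3$. By Lemma~\ref{lem:anc33}, some sequence of DTRs reduces $G$ to the $2$-zigzag with chain vector $(3,3)$, whose entries sum to $6$.

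The argument then rests on a single monotonicity observation: from any chain vector of sum at most $3$, no DTR can increase the sum. Inspecting the seven rows of Table~\ref{tbl:dtrcveffect}, every DTR changes the sum by $+1$, $0$, or $-1$, and the only sum-increasing DTRs are of type $(1,1,1)$, which consumes a child sub-pattern $[m,3,n]$, and of type $(1,1,0)$, which consumes a sub-pattern $[m,2,0,n]$. In both cases Definition~\ref{defn:triangletypes} forces $m,n\geq 1$: the flag $b=1$ demands a genuine additional triangle $(v_1,v_2,v_3)$ outside the expanded one, contributing at least one triangle to the zigzag on the left, and similarly $a=1$ forces at least one triangle on the right; the border cases $m=0$ or $n=0$ are captured instead by the non-sum-increasing rows $(0,1,c)$ and $(1,0,c)$. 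Consequently a sum-increasing DTR requires a sub-pattern of weight at least $m+3+n\geq 5$ (type $(1,1,1)$) or $m+2+n\geq 4$ (type $(1,1,0)$), both impossible when the total chain vector sum is at most $3$.

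It follows that every DTR applied to $\mathbf z$ yields another chain vector of sum at most $3$, and by induction on the length of the reduction sequence the sum never rises above $3$. This contradicts the requirement that the sequence terminates at the chain vector $(3,3)$ of sum $6$, giving the desired contradiction and proving $\mathrm{tri}(G)\geq 4$.

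The main obstacle I foresee is the careful justification of the $m,n\geq 1$ side conditions: Table~\ref{tbl:dtrcveffect} only says $m,n\in\mathbb{Z}_{\geq 0}$, and one must unpack the proof of Lemma~\ref{lem:triangletypes} to see that whenever $a=1$ (respectively $b=1$) the zigzag on the corresponding side picks up an extra triangle beyond the ones in the expanded figure, so the corresponding entry of the pattern is strictly positive; the cases $m=0$ or $n=0$ are precisely when $a$ or $b$ drops to $0$, flipping the DTE type to one that preserves rather than increases the sum. Once this dichotomy is spelled out, the rest of the proof is a short bookkeeping check.
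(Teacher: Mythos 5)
Your reduction to the four candidate chain vectors and the target $(3,3)$ via Corollary~\ref{lem K5 desc is pseudo desc}, Lemma~\ref{lem:triG is sumCV} and Lemma~\ref{lem:anc33} matches the paper's setup exactly. But the monotonicity claim that replaces the rest of the paper's argument is false, and the error lies in the side conditions you impose on $m$ and $n$. In the proof of Lemma~\ref{lem:triangletypes}, $m$ is defined as the number of triangles in the zigzag lying \emph{beyond} the edge shared with the triangle witnessing $b=1$; it does \emph{not} count that triangle itself. The $b$-triangle is part of the block consumed by the reduction (it is one of the ``$4$'' in the parent pattern $[m+n+4]$, resp.\ one of the ``$2$'' in $[m+2,\dots]$). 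So $b=1$ is entirely compatible with $m=0$: it simply means the $b$-triangle is the last one in its zigzag. The configurations with $m=0$ or $n=0$ are not absorbed into the rows of type $(0,1,c)$ and $(1,0,c)$ — those types mean the $a$- or $b$-triangle is absent altogether, which is a different local picture with a different effect on the chain vector.

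Concretely, a type $(1,1,1)$ reduction with $m=n=0$ sends the child pattern $[0,3,0]$ to the parent pattern $[4]$, so the chain vector $(3,0)$, of sum $3$, becomes $(4)$, of sum $4$; this is the first row of Table~\ref{tri4:tbl:30}, and $(3,0)$ is one of the four starting vectors you must handle. Likewise $(0,2,0,0)\to(2,1)$ raises the sum from $2$ to $3$ (Table~\ref{tri4:tbl:20}), and $(1,2,0,0)\to(3,1)$ raises it from $3$ to $4$ (Table~\ref{tri4:tbl:210}); in some branches the sum climbs to $5$ or $6$, e.g.\ $(1,3,1,0)\to(6,0)$ in Table~\ref{tri4:tbl:1310}. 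The reason these branches still cannot reach $(3,3)$ is structural rather than arithmetic: $(4)$ is a closed $1$-zigzag admitting only $(0,0,c)$ reductions (Lemma~\ref{lem 00c observe}), while $(2,1)$ and $(3,1)$ are unrealizable by Lemma~\ref{lem:unrealizable cv for k5d}. This is precisely why the paper's proof is an exhaustive case analysis over Tables~\ref{tri4:tbl:20}--\ref{tri4:tbl:31010} instead of a one-line invariant. To repair your argument you would need to track more than the sum — which chain vectors are realizable and which only admit $(0,0,c)$ reductions — which in effect reproduces that case analysis.
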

\begin{proof}
	Assume, for a contradiction, that $G$ is a $K_5$-descendant with $\mbox{tri}(G)\leq3$. If $\mbox{order}(G)\leq8$, then $G$ is one of $5$ possible graphs (see Figure~\ref{fig:K5familytree}), all of which have $\mbox{tri}(G)\geq 6$. If $G$ is a $1$-zigzag of order $\geq 7$, then $\mbox{tri}(G)=\mbox{order}(G)\geq 7$. We assume then that $\mbox{order}(G)\geq9$ and that $G$ is not a $1$-zigzag. By Lemma~\ref{lem K5 desc is pseudo desc}, $G$ has a chain vector.
	
	Since a DTE always leaves a double triangle, at least one of the zigzags in $G$ has $\geq 2$ triangles. By Lemma~\ref{lem:triG is sumCV} and Lemma~\ref{lem:unrealizable cv for k5d}, the chain vector of $G$ must be one of the following:
	\begin{align*}
	(2,0),(2,1,0),(2,0,1,0),(3,0).
	\end{align*}

	Through Theorem \ref{thm:ancestor} and Lemma \ref{lem:anc33}, we can start from $G$ and arrive at the chain vector $(3,3)$ through double triangle reductions exclusively (that is, without double triangle expansions). 
	
        Since $G$ is a $K_5$ descendant with at least $9$ vertices, by Corollary~\ref{result:chainvectordtrlist}, the list of all possible transformations of the chain vector is in Table \ref{tbl:dtrcveffect}.
%
%
In what follows, we want to show that starting with any of the four possible chain vectors for $G$, we either
\begin{itemize}
        \item reach a chain vector unrealizable for a $K_5$ descendant by Lemma~\ref{lem:unrealizable cv for k5d},
        \item reach a chain vector that does not have any elements $\geq2$ and hence cannot belong to a $K_5$ descendant, such chain vectors will be called \emph{invalid},
        \item  reach a chain vector where the only possible reductions are as described in Lemma~\ref{lem 00c observe}, and where for the second point of Lemma~\ref{lem 00c observe} the result of such reductions puts us into one of the other points itemized here, or
        \item get stuck in a loop.
\end{itemize}
That is, we cannot reach the chain vector $(3,3)$. 
	
	For readability, the proof is organized into Tables~\ref{tri4:tbl:20}--\ref{tri4:tbl:31010}.  The tables should be interpreted as follows.  $CV^{(i)}$ refers to the chain vector after $i$ double triangle reductions.  The column labelled ``Outcome'' justifies why this new chain vector cannot reach $(3,3)$ by appealing to the points above or indicating which table this chain vector is addressed in.  The starting chain vectors are listed in the first column in whichever equivalent form clearly illustrates the DTR we are applying.  Note that this means that there are often consecutive $0$s in these chain vectors; it is important to consider such possibilities in order to enumerate every eventuality.

		\newcommand{\onlyones}{Invalid}
	\newcommand{\notkfived}{Lemma~\ref{lem:unrealizable cv for k5d}}
        \newcommand{\observe}{Lemma~\ref{lem 00c observe}}
	
	\newcommand{\ncols}{1}
	\renewcommand{\ncols}{5}
	\begin{table}[ht]
		\caption{$CV^{(0)}=(2,0)$.}
		\begin{center}
			\begin{tabular}{|*{\ncols}{c|}}
				\hline
				$CV^{(0)}$ & \multicolumn{2}{c|}{$CV^{(1)}$} & DTR type & Outcome\\ \hline
				
				$(0,2,0,0)$ &
				$[0,2,0,0]\rightarrow[2,1]$ & $(2,1)$ & $(1,1,0)$ & \notkfived \\ \hline
				
				$(0,2,0,0,0)$ &
				$[0,2,0,0]\rightarrow[2,1]$ & $(2,1,0)$ & $(1,1,0)$ & Table~\ref{tri4:tbl:210} \\ \hline
				
				$(0,2,0,0)$ &
				$[0,2,0]\rightarrow[2,0]$ & $(2,0)$ & $(0,1,0)$ & Table~\ref{tri4:tbl:20} \\ \hline
				
				$(2,0,0)$ &
				$[2,0,0]\rightarrow[1,1]$ & $(1,1)$ & $(1,0,0)$ & \onlyones \\ \hline
				
				$(2,0,0,0)$ &
				$[2,0,0]\rightarrow[1,1]$ & $(1,1,0)$ & $(1,0,0)$ & \onlyones \\ \hline
				
				$(2,0)$ &
				$[2]\rightarrow[1]$ & $(1,0)$ & $(0,0,c)$ & \onlyones \\ \hline		
			\end{tabular}
		\end{center}
		\label{tri4:tbl:20}
	\end{table}		

	\renewcommand{\ncols}{8}
	\begin{table}[ht]
		\caption{$CV^{(0)}=(2,1,0)$.}
		\begin{center}
			\begin{tabular}{|*{\ncols}{c|}}
				\hline
				
				$CV^{(0)}$ & \multicolumn{2}{c|}{$CV^{(1)}$} & DTR type & Outcome\\ \hline
				
				$(1,2,0,0)$ &
				$[1,2,0,0]\rightarrow[3,1]$ & $(3,1)$ & $(1,1,0)$ & \notkfived \\ \hline
				
				$(1,2,0,0,0)$ &
				$[1,2,0,0]\rightarrow[3,1]$ & $(3,1,0)$ & $(1,1,0)$ & Table~\ref{tri4:tbl:310} \\ \hline
				
				$(1,2,0)$ &
				$[1,2,0]\rightarrow[3,0]$ & $(3,0)$ & $(0,1,0)$ & Table~\ref{tri4:tbl:30} \\ \hline
				
				$(2,0,0,1)$ &
				$[2,0,0]\rightarrow[1,1]$ & $(1,1,1)$ & $(1,0,0)$ & \onlyones \\ \hline
				
				$(2,0,0,0,1)$ &
				$[2,0,0]\rightarrow[1,1]$ & $(1,1,0,1)$ & $(1,0,0)$ & \onlyones \\ \hline
				
				$(2,0,1)$ &				
				$[2,0,1]\rightarrow[1,2]$ & $(1,2)$ & $(1,0,0)$ & \notkfived \\ \hline
				
				$(2,1,0)$ &
				$[2]\rightarrow[1]$ & $(1,1,0)$ & $(0,0,c)$ & \onlyones \\\hline
	
			\end{tabular}
		\end{center}
	\label{tri4:tbl:210}
	\end{table}		

	\renewcommand{\ncols}{5}
	\begin{table}[ht]
		\caption{$CV^{(0)}=(2,0,1,0)$.} 
		\begin{center}
			\begin{tabular}{|*{\ncols}{c|}}
				\hline
								
				$CV^{(0)}$ & 
				\multicolumn{2}{c|}{$CV^{(1)}$} & DTR type & Outcome\\ \hline
				
				$(0,2,0,0,1)$ & $[0,2,0,0]\rightarrow [2,1]$ & $(2,1,1)$ & $(1,1,0)$ & \notkfived \\\hline
				$(0,2,0,0,1,0)$ & $[0,2,0,0]\rightarrow [2,1]$ & $(2,1,1,0)$ & $(1,1,0)$ & Table~\ref{tri4:tbl:2110} \\\hline
                                $(0,2,0,0,0,1)$ & $[0,2,0,0]\rightarrow [2,1]$ & $(2,1,0,1)$ & $(1,1,0)$ & Table~\ref{tri4:tbl:1310} \\\hline
                                $(0,2,0,0,0,1,0)$ & $[0,2,0,0]\rightarrow [2,1]$ & $(2,1,0,1,0)$ & $(1,1,0)$ & Table~\ref{tri4:tbl:31010} \\\hline
				
				$(0,2,0,1)$ & $[0,2,0,1]\rightarrow [2,2]$ & $(2,2)$ & $(1,1,0)$ & \notkfived \\ \hline
                                $(0,2,0,1,0)$ & $[0,2,0,1]\rightarrow [2,2]$ & $(2,2,0)$ & $(1,1,0)$ & Table~\ref{tri4:tbl:220} \\ \hline

                                $(2,0,0,1,0)$ & $[2,0,0]\rightarrow[1,1]$ & $(1,1,1,0)$ & $(1,0,0)$ & \onlyones \\\hline
                                $(2,0,0,0,1,0)$ & $[2,0,0]\rightarrow[1,1]$ & $(1,1,0,1,0)$ & $(1,0,0)$ & \onlyones \\\hline
				$(2,0,1,0)$ & $[2,0,1]\rightarrow[1,2]$ & $(1,2,0)$ & $(1,0,0)$ & Table~\ref{tri4:tbl:210} \\\hline

				$(0,2,0,0,1)$ & $[0,2,0]\rightarrow[2,0]$ & $(1,2,0)$ & $(0,1,0)$ & Table~\ref{tri4:tbl:210} \\\hline
				$(0,2,0,1)$ & $[0,2,0]\rightarrow[2,0]$ & $(1,2)$ & $(0,1,0)$ & \notkfived \\\hline
				
				$(2,0,1,0)$ & $[2]\rightarrow[1]$ & $(1,0,1,0)$ & $(0,0,c)$ & \onlyones \\\hline			
			\end{tabular}
		\end{center}
	\label{tri4:tbl:2010}
	\end{table}
        
	\renewcommand{\ncols}{5}
	\begin{table}[ht]
		\caption{$CV^{(0)}=(3,0)$.}
		\begin{center}
			\begin{tabular}{|*{\ncols}{c|}}
				\hline
				$CV^{(0)}$ & \multicolumn{2}{c|}{$CV^{(1)}$} & DTR type & Outcome\\ \hline
				$(0,3,0)$ & $[0,3,0]\rightarrow[3]$ & $(4)$ & $(1,1,1)$  & \observe \\ \hline
				$(0,3,0,0)$ & $[0,3,0]\rightarrow[3]$ & $(4,0)$ & $(1,1,1)$ & \observe\ and Table~\ref{tri4:tbl:30} \\\hline
				$(3,0)$ & $[3,0]\rightarrow[3]$ & $(3)$ & $(1,0,1)$ & \observe \\ \hline
				$(3,0,0)$ & $[3,0]\rightarrow[3]$ & $(3,0)$ & $(1,0,1)$ & Table~\ref{tri4:tbl:30} \\ \hline
				$(3,0)$ & $[3]\rightarrow[2]$ & $(2,0)$ & $(0,0,c)$ & Table~\ref{tri4:tbl:20} \\\hline
			\end{tabular}
		\end{center}
		\label{tri4:tbl:30}
	\end{table}

        	\renewcommand{\ncols}{5}
	\begin{table}[ht]
		\caption{$CV^{(0)}=(2,2,0)$.}
		\begin{center}
			\begin{tabular}{|*{\ncols}{c|}}
				\hline
				$CV^{(0)}$ & \multicolumn{2}{c|}{$CV^{(1)}$} & DTR type & Outcome\\ \hline
                                $(2,2,0,0)$ & $[2,2,0,0]\rightarrow [4,1]$ & $(4,1)$ & $(1,1,0)$ & \notkfived\\ \hline				
				$(2,2,0,0,0)$ & $[2,2,0,0]\rightarrow [4,1,0]$ & $(4,1,0)$ & $(1,1,0)$ & \observe\ and Table~\ref{tri4:tbl:310}\\ \hline
				$(2,2,0)$ & $[2,2,0]\rightarrow[4,0]$ & $(4,0)$ & $(0,1,0)$ & \observe\ and Table~\ref{tri4:tbl:30}\\ \hline			
				$(2,0,2)$ & $[2,0,2]\rightarrow[1,3]$ & $(3,1)$ & $(1,0,0)$ & \notkfived \\ \hline
				$(2,0,0,2)$ & $[2,0,0]\rightarrow[1,1]$ & $(1,1,2)$ & $(1,0,0)$ & \notkfived \\ \hline
                                $(2,0,0,0,2)$ & $[2,0,0]\rightarrow[1,1]$ & $(1,1,0,2)$ & $(1,0,0)$ & Table~\ref{tri4:tbl:2110} \\ \hline
				$(2,2,0)$ & $[2]\rightarrow[1]$ & $(2,1,0)$ & $(0,0,c)$ & Table~\ref{tri4:tbl:210}\\ \hline
				
			\end{tabular}
		\end{center}
		\label{tri4:tbl:220}
	\end{table}

		\renewcommand{\ncols}{5}
	\begin{table}[ht]
		\caption{$CV^{(0)} = (3,1,0)$}
		\begin{center}
			\begin{tabular}{|*{\ncols}{c|}}
				\hline
				
				$CV^{(0)}$ & \multicolumn{2}{c|}{$CV^{(1)}$} & DTR type & Outcome\\ \hline
				
				$(0,3,1)$ &
				$[0,3,1]\rightarrow[4]$ & $(4)$ & $(1,1,1)$ & \observe \\ \hline
				
				$(0,3,1,0)$ &
				$[0,3,1]\rightarrow[4]$ & $(4,0)$ & $(1,1,1)$ & \observe\ and Table~\ref{tri4:tbl:30}\\ \hline
				$(1,3,0)$ &
				$[1,3,0]\rightarrow[4]$ & $(4)$ & $(1,1,1)$ & \observe \\ \hline
				
				$(1,3,0,0)$ &
				$[1,3,0]\rightarrow[4]$ & $(4,0)$ & $(1,1,1)$ & \observe\ and Table~\ref{tri4:tbl:30}\\ \hline
				
				$(3,1,0)$ &
				$[3,1]\rightarrow[4]$ & $(4,0)$ & $(1,0,1)$ & \observe\ and Table~\ref{tri4:tbl:30}\\ \hline

                                $(3,0,1)$ &
                                $[3,0]\rightarrow[3]$ & $(3,1)$ & $(1,0,1)$ & \notkfived \\ \hline
                                $(3,0,0,1)$ &
                                $[3,0]\rightarrow[3]$ & $(3,0,1)$ & $(1,0,1)$ & Table~\ref{tri4:tbl:310} \\ \hline

                                $(0,3,1)$ &
				$[0,3]\rightarrow[3]$ & $(3,1)$ & $(0,1,1)$ & \notkfived\\ \hline
                                
				$(0,3,1,0)$ &
				$[0,3]\rightarrow[3]$ & $(3,1,0)$ & $(0,1,1)$ & Table~\ref{tri4:tbl:310}\\ \hline

                                $(1,3,0)$ &
				$[1,3]\rightarrow[4]$ & $(4,0)$ & $(0,1,1)$ & \observe\ and Table~\ref{tri4:tbl:30} \\ \hline
                                
				$(3,1,0)$ &
				$[3]\rightarrow[2]$ & $(2,1,0)$ & $(0,0,c)$ & Table~\ref{tri4:tbl:210}\\ \hline

          			\end{tabular}
		\end{center}
		\label{tri4:tbl:310}
	\end{table}                      

        	\renewcommand{\ncols}{5}
	\begin{table}[ht]
		\caption{$CV^{(0)}=(3,0,1,0)$.}
		\begin{center}
			\begin{tabular}{|*{\ncols}{c|}}
				\hline
				
				$CV^{(0)}$ & \multicolumn{2}{c|}{$CV^{(1)}$} & DTR type & Outcome\\ \hline
				$(0,3,0,0,1,0)$ & $[0,3,0]\rightarrow[4]$ & $(4,0,1,0)$ & $(1,1,1)$ & Table~\ref{tri4:tbl:31010}
				\\ \hline
				$(0,3,0,1,0)$ & $[0,3,0]\rightarrow[4]$ & $(4,1,0)$ & $(1,1,1)$ & \observe\ and Table~\ref{tri4:tbl:310}
				\\ \hline
				$(0,3,0,0,1)$ & $[0,3,0]\rightarrow[4]$ & $(4,0,1)$ & $(1,1,1)$ & Table~\ref{tri4:tbl:2010}
				\\ \hline
				$(0,3,0,1)$ & $[0,3,0]\rightarrow[4]$ & $(4,1)$ & $(1,1,1)$ & \notkfived
				\\ \hline
				$(0,3,0,1)$ & $[0,3]\rightarrow[3]$ & $(3,0,1)$ & $(0,1,1)$ & Table~\ref{tri4:tbl:310}
				\\ \hline
				$(0,3,0,1,0)$ & $[0,3]\rightarrow[3]$ & $(3,0,1,0)$ & $(0,1,1)$ & Table~\ref{tri4:tbl:31010}
				\\ \hline
				$(3,0,1,0)$ & $[3,0]\rightarrow[3]$ & $(3,1,0)$ & $(1,0,1)$ & Table~\ref{tri4:tbl:310}
				\\ \hline
				$(3,0,0,1,0)$ & $[3,0]\rightarrow[3]$ & $(3,0,1,0)$ & $(1,0,1)$ & Table~\ref{tri4:tbl:31010}
				\\ \hline                                
				$(3,0,1,0)$ & $[3]\rightarrow[2]$ & $(2,0,1,0)$ & $(0,0,c)$ & Table~\ref{tri4:tbl:2010}
				\\ \hline

			\end{tabular}
		\end{center}
		\label{tri4:tbl:3010}
	\end{table}

        \begin{table}[ht]
		\caption{$CV^{(0)} = (3,2,0)$}
		\begin{center}
			\begin{tabular}{|*{\ncols}{c|}}
				\hline
				
				$CV^{(0)}$ & \multicolumn{2}{c|}{$CV^{(1)}$} & DTR type & Outcome\\ \hline				
				$(0,3,2)$ & 
				$[0,3,2]\rightarrow [5]$ & $(5)$ & $(1,1,1)$ & \observe \\\hline
				
				$(0,3,2,0)$ & 
				$[0,3,2]\rightarrow [5]$ & $(5,0)$ & $(1,1,1)$ & \observe\ and Table~\ref{tri4:tbl:30}\\\hline

                                $(2,3,0)$ & 
				$[2,3,0]\rightarrow [5]$ & $(5)$ & $(1,1,1)$ & \observe \\\hline
				
				$(2,3,0,0)$ & 
				$[2,3,0]\rightarrow [5]$ & $(5,0)$ & $(1,1,1)$ & \observe\ and Table~\ref{tri4:tbl:30}\\\hline
				
				$(0,3,2)$ & 
				$[0,3]\rightarrow [3]$ & $(3,2)$ & $(0,1,1)$ & Lemma~\ref{lem:unrealizable cv for k5d}\\\hline

				$(0,3,2,0)$ & 
				$[0,3]\rightarrow [3]$ & $(3,2,0)$ & $(0,1,1)$ & Table~\ref{tri4:tbl:320}\\\hline

                                $(2,3,0)$ & 
				$[2,3]\rightarrow [5]$ & $(5,0)$ & $(0,1,1)$ & \observe\ and Table~\ref{tri4:tbl:30}\\\hline

				$(3,2,0)$ & 
				$[3,2]\rightarrow [5]$ & $(0,5)$ & $(1,0,1)$ & \observe\ and Table~\ref{tri4:tbl:30}\\\hline

                                $(3,0,2)$ & 
				$[3,0]\rightarrow [3]$ & $(3,2)$ & $(1,0,1)$ &  Lemma~\ref{lem:unrealizable cv for k5d}\\\hline

                                $(3,0,0,2)$ & 
				$[3,0]\rightarrow [3]$ & $(3,0,2)$ & $(1,0,1)$ & Table~\ref{tri4:tbl:320}\\\hline

				$(3,2,0,0)$ &
				$[3,2,0,0]\rightarrow [5,1]$ & $(5,1)$ & $(1,1,0)$ & Lemma~\ref{lem:unrealizable cv for k5d}\\\hline
				
				$(3,2,0,0,0)$ &
				$[3,2,0,0]\rightarrow [5,1]$ & $(5,1,0)$ & $(1,1,0)$ & \observe\ and Table~\ref{tri4:tbl:310}\\\hline
				
				$(3,2,0)$ &
				$[3,2,0]\rightarrow[5,0]$ & $(5,0)$ & $(0,1,0)$ & \observe\ and Table~\ref{tri4:tbl:30}\\\hline
				
				$(2,0,3)$ &
				$[2,0,3]\rightarrow[1,4]$ & $(4,1)$ & $(1,0,0)$ & \observe\ and Lemma~\ref{lem:unrealizable cv for k5d}\\\hline
				
				$(3,2,0,0)$ &
				$[2,0,0]\rightarrow[1,1]$ & $(3,1,1)$ & $(1,0,0)$ & Lemma~\ref{lem:unrealizable cv for k5d}\\\hline
				
				$(3,2,0,0,0)$ &
				$[2,0,0]\rightarrow[1,1]$ & $(3,1,1,0)$ & $(1,0,0)$ & Table~\ref{tri4:tbl:3110}\\\hline	
                                $(3,2,0)$ &
                                
$[3] \rightarrow [2]$ & $(2,2,0)$ & $(0,0,c)$ &Table~\ref{tri4:tbl:220}\\\hline 			
	
				$(3,2,0)$ & $[2] \rightarrow [1]$ & $(3,1,0)$ &$(0,0,c)$ & Table~\ref{tri4:tbl:310} \\\hline					
				
			\end{tabular}
		\end{center}
		\label{tri4:tbl:320}
	\end{table}

	\renewcommand{\ncols}{5}
	\begin{table}[ht]
		\caption{$CV^{(0)} = (3,1,1,0)$.}
		\begin{center}
			\begin{tabular}{|*{\ncols}{c|}}
				\hline
				
                                $CV^{(0)}$ & 
				\multicolumn{2}{c|}{$CV^{(1)}$} & DTR type & Outcome\\ \hline
				
				$(0,3,1,1)$ & $[0,3,1]\rightarrow[5]$ & $(5,1)$ & $(1,1,1)$ & \notkfived\\ \hline				
				$(0,3,1,1,0)$ & 
				$[0,3,1]\rightarrow[5]$ & $(5,1,0)$ & $(1,1,1)$ & \observe\ and Table~\ref{tri4:tbl:310}\\ \hline

                                $(1,3,0,1)$ & $[1,3,0]\rightarrow[5]$ & $(5,1)$ & $(1,1,1)$ & \notkfived\\ \hline				
				$(1,3,0,0,1)$ & 
				$[1,3,0]\rightarrow[5]$ & $(5,0,1)$ & $(1,1,1)$ & \observe\ and Table~\ref{tri4:tbl:310}\\ \hline
				
				$(3,1,1,0)$ & $[3,1]\rightarrow[4]$ & $(4,1,0)$ & $(1,0,1)$ & \observe\ and Table~\ref{tri4:tbl:310}\\ \hline
				$(3,0,1,1)$ & $[3,0]\rightarrow[3]$ & $(3,1,1)$ & $(1,0,1)$ & \notkfived\\ \hline
                                $(3,0,0,1,1)$ & $[3,0]\rightarrow[3]$ & $(3,0,1,1)$ & $(1,0,1)$ & Table~\ref{tri4:tbl:3110}\\ \hline
                                
				$(0,3,1,1)$ & $[0,3]\rightarrow[3]$ & $(3,1,1)$ & $(0,1,1)$ & \notkfived\\ \hline
				$(0,3,1,1,0)$ & $[0,3]\rightarrow[3]$ & $(3,1,1,0)$ & $(0,1,1)$ & Table~\ref{tri4:tbl:3110}\\ \hline
                                $(1,3,0,1)$ & $[1,3]\rightarrow[4]$ & $(4,0,1)$ & $(0,1,1)$ & \observe\ and Table~\ref{tri4:tbl:310}\\ \hline

                                $(3,1,1,0)$ & $[3]\rightarrow[2]$ & $(2,1,1,0)$ & $(0,0,c)$ &Table~\ref{tri4:tbl:2110}\\ \hline 

			\end{tabular}
		\end{center}
		\label{tri4:tbl:3110}
	\end{table}

	\renewcommand{\ncols}{5}
	\begin{table}[ht]
		\caption{$CV^{(0)} =(2,1,1,0)$.}
		\begin{center}
			\begin{tabular}{|*{\ncols}{c|}}
				\hline
				
                                $CV^{(0)}$ & 
				\multicolumn{2}{c|}{$CV^{(1)}$} & DTR type & Outcome\\ \hline

                                $(1,2,0,0,1)$ & 
				$[1,2,0,0]\rightarrow[3,1]$ & $(3,1,1)$ & $(1,1,0)$ & \notkfived \\ \hline
				$(1,2,0,0,0,1)$ & $[1,2,0,0]\rightarrow[3,1]$ & $(1,3,1,0)$ & $(1,1,0)$ & Table~\ref{tri4:tbl:1310} \\ \hline
				
				$(1,2,0,1)$ & $[1,2,0,1]\rightarrow[3,2]$ & $(3,2)$ & $(1,1,0)$ & \notkfived \\ \hline
				
				$(2,0,1,1)$ & $[2,0,1]\rightarrow[1,2]$ & $(1,2,1)$ & $(1,0,0)$ & \notkfived \\ \hline
				$(2,0,0,1,1)$ & $[2,0,0]\rightarrow[1,1]$ & $(1,1,1,1)$ & $(1,0,0)$ & \onlyones \\ \hline
				$(2,0,0,0,1,1)$ & $[2,0,0]\rightarrow[1,1]$ & $(1,1,0,1,1)$ & $(1,0,0)$ & \onlyones \\ \hline
				
                                $(1,2,0,1)$ & $[1,2,0]\rightarrow[3,0]$ & $(3,1,0)$ & $(0,1,0)$ & Table~\ref{tri4:tbl:210} \\ \hline

                                $(2,1,1,0)$ & $[2]\rightarrow [1]$ & $(1,1,1,0)$ & $(0,0,c)$ & \onlyones \\ \hline				
%
%
				
			\end{tabular}
		\end{center}
		\label{tri4:tbl:2110}
	\end{table}	

	\renewcommand{\ncols}{5}
	\begin{table}[ht]
		\caption{$CV^{(0)}\in\{(1,3,1,0), (1,2,1,0)\}$.}
		\begin{center}
			\begin{tabular}{|*{\ncols}{c|}}
				\hline
				
				$CV^{(0)}$ & 
				\multicolumn{2}{c|}{$CV^{(1)}$} & DTR type & Outcome\\ \hline
				
				$(1,3,1,0)$ &
				$[1,3,1]\rightarrow[6]$ & $(6,0)$ & $(1,1,1)$ & \observe\ and Table~\ref{tri4:tbl:30} \\ \hline

                                $(1,3,1,0)$ &
				$[1,3]\rightarrow[4]$ & $(4,1,0)$ & $(0,1,1)$ & \observe\ and Table~\ref{tri4:tbl:310} \\ \hline

                                $(3,1,0,1)$ &
				$[3,1]\rightarrow[4]$ & $(4,0,1)$ & $(1,0,1)$ & \observe\ and Table~\ref{tri4:tbl:310} \\ \hline

                                $(1,3,1,0)$ &
				$[3]\rightarrow[2]$ & $(1,2,1,0)$ & $(0,0,c)$ & Table~\ref{tri4:tbl:1310} \\ \hline

                                $(1,2,1,0)$ &
				$[2]\rightarrow[1]$ & $(1,1,1,0)$ & $(0,0,c)$ & \onlyones \\ \hline		
			\end{tabular}
		\end{center}
		\label{tri4:tbl:1310}
	\end{table}

	\renewcommand{\ncols}{5}
	\begin{table}[ht]
		\caption{$CV^{(0)}\in\{(3,1,0,1,0), (2,1,0,1,0)\}$.}
		\begin{center}
			\begin{tabular}{|*{\ncols}{c|}}
				\hline
				
				$CV^{(0)}$ & \multicolumn{2}{c|}{$CV^{(1)}$} & DTR type & Outcome\\ \hline
				
				$(1,3,0,1,0)$ & $[1,3,0]\rightarrow[5]$ & $(5,1,0)$ & $(1,1,1)$ & \observe\ and Table~\ref{tri4:tbl:2010}
				\\ \hline
				
				$(1,3,0,0,1,0)$ & $[1,3,0]\rightarrow[5]$ & $(5,0,1,0)$ & $(1,1,1)$ & \observe\ and Table~\ref{tri4:tbl:3010} \\ \hline
                                
				$(0,3,1,0,1)$ & $[0,3,1]\rightarrow[5]$ & $(5,0,1)$ & $(1,1,1)$ & \observe\ and Table~\ref{tri4:tbl:2010}
				\\ \hline
				
				$(0,3,1,0,1,0)$ & $[0,3,1]\rightarrow[5]$ & $(5,0,1,0)$ & $(1,1,1)$ & \observe\ and Table~\ref{tri4:tbl:3010} \\ \hline				
				$(3,1,0,1,0)$ & $[3,1]\rightarrow[4]$ & $(4,0,1,0)$ & $(1,0,1)$ & \observe\ and Table~\ref{tri4:tbl:31010} \\\hline
				$(3,0,1,0,1)$ & $[3,0]\rightarrow[3]$ & $(3,1,0,1)$ & $(1,0,1)$ & Table~\ref{tri4:tbl:1310} \\\hline

				$(0,3,1,0,1)$ & $[0,3]\rightarrow[3]$ & $(3,1,0,1)$ & $(0,1,1)$ & Table~\ref{tri4:tbl:1310} \\\hline
				$(0,3,1,0,1,0)$ & $[0,3]\rightarrow[3]$ & $(3,1,0,1,0)$ & $(0,1,1)$ & Table~\ref{tri4:tbl:31010} \\\hline
				$(1,3,0,1,0)$ & $[1,3]\rightarrow[4]$ & $(4,0,1,0)$ & $(0,1,1)$ & \observe\ and Table~\ref{tri4:tbl:3010} \\\hline
				$(3,1,0,1,0)$ & $[3]\rightarrow[2]$ & $(2,1,0,1,0)$ & $(0,0,c)$ & Table~\ref{tri4:tbl:31010} \\\hline                                
				$(1,2,0,0,1,0)$ & $[1,2,0,0]\rightarrow [3,1]$ & $(3,1,1,0)$ & $(1,1,0)$ & Table~\ref{tri4:tbl:3110}
				\\ \hline
				$(1,2,0,0,0,1,0)$ & $[1,2,0,0]\rightarrow [3,1]$ & $(3,1,0,1,0)$ & $(1,1,0)$ & Table~\ref{tri4:tbl:31010}
				\\ \hline                                
				
				$(1,2,0,1,0)$ & $[1,2,0,1]\rightarrow [3,2]$ & $(3,2,0)$ & $(1,1,0)$ & Table~\ref{tri4:tbl:320}
				\\ \hline	
				
				$(1,2,0,1,0)$ & $[1,2,0]\rightarrow[3,0]$ & $(3,0,1,0)$ & $(0,1,0)$ & Table~\ref{tri4:tbl:31010}	
				\\ \hline
				$(2,0,1,0,1)$ & $[2,0,1]\rightarrow[1,2]$ & $(1,2,0,1)$ & $(1,0,0)$ & Table~\ref{tri4:tbl:2110}
				\\ \hline
				$(2,0,0,1,0,1)$ & $[2,0,0]\rightarrow[1,1]$ & $(1,1,1,0,1)$ & $(1,0,0)$ &  \onlyones
				\\ \hline
				$(2,0,0,0,1,0,1)$ & $[2,0,0]\rightarrow[1,1]$ & $(1,1,1,0,1,0)$ & $(1,0,0)$ & \onlyones
				\\ \hline
				$(2,1,0,1,0)$ & $[2]\rightarrow[1]$ & $(1,1,0,1,0)$ & $(0,0,c)$ & \onlyones
			        \\ \hline

			\end{tabular}
		\end{center}
		\label{tri4:tbl:31010}
	\end{table}

	After considering all the tables, in all cases we have shown that we cannot reach the chain vector $(3,3)$, as desired.
	
\end{proof}

One of the key things going on in this proof is that starting with one of the chain vectors with fewer than four triangles and iteratively applying DTRs, any $2$ that appeared in a resulting chain vector was either next to another nonzero entry or was the only nonzero entry. In particular $(2,0,1,0,1,0)$ never arose in the proof of the theorem.  This is key because $(2,0,1,0,1,0)$ is a chain vector of a $K_5$ descendant. It is not obvious that the spacing zeros in the chain vector should be necessary for $K_5$ descendants, but that this is true is one of the consequences of the proof.

Generally, the structure of $K_5$ descendants is quite subtle and difficult.  Our results are only a beginning.

\clearpage

\bibliographystyle{plain}
\bibliography{main}

\end{document}